\documentclass[12pt]{amsart}

\usepackage{comment}
\usepackage[normalem]{ulem}
\usepackage[colorlinks,linkcolor=red,anchorcolor=blue,citecolor=blue]{hyperref} 
\usepackage{amscd} 
\usepackage{amsmath}
\usepackage{mathtools}
\usepackage{amssymb} 
\usepackage{amsthm}
\usepackage{dsfont}
\usepackage{bigdelim}
\usepackage{enumerate}
\usepackage{graphicx}
\usepackage{mathrsfs}
\usepackage{multirow}
\usepackage[all]{xy} 
\usepackage{color} 
\usepackage{enumitem}
\usepackage[dvipsnames]{xcolor}
\usepackage{bbm}
\usepackage{tikz}
\usepackage{tikz-cd}

\newtheorem{thm}{Theorem}[section] 

\newtheorem{cor}[thm]{Corollary}
\newtheorem{prop}[thm]{Proposition}

\newtheorem{lem}[thm]{Lemma}
\theoremstyle{definition} 
\newtheorem{defn}[thm]{Definition}

\newtheorem{ques}[thm]{Question}

\theoremstyle{remark}
\newtheorem{rem}[thm]{Remark}
\newtheorem{setup}[thm]{Setup}

\numberwithin{equation}{section}

\newcommand{\rk}[0]{\operatorname{rk}}
\newcommand{\supp}[0]{\operatorname{Supp}}

\newcommand{\codim}[0]{\operatorname{codim}}

\newcommand{\Coker}[0]{\operatorname{Coker}}

\newcommand{\Hom}[0]{\mathscr{H}\!\textit{om}}

\newcommand{\Tor}{{\rm Tor}}

\newcommand{\reg}{{\rm{reg}}}

\newcommand{\underalign}[2]{\quad \underset{\mathclap{\strut #1}}{#2}\quad}
\newcommand{\polar}{\Omega}
\newcommand{\orb}{{\rm orb}}
\newcommand{\tor}{{\rm Tor}}
  
\newcommand{\R}{\mathbb{R}}

\newcommand{\C}{\mathbb{C}}
\newcommand{\Q}{\mathbb{Q}}

\title[Semipositivity of the orbifold second Chern class in Fujiki's class]
{Semipositivity of the orbifold second Chern class \ in Fujiki's class}

\author{Masataka IWAI}
\address{Department of Mathematics, Graduate School of Science, The University of Osaka,
1-1, Machikaneyama-cho, Toyonaka, Osaka 560-0043, Japan.}
\email{{\tt masataka@math.sci.osaka-u.ac.jp}}
\email{{\tt masataka.math@gmail.com}}

\author{Satoshi Jinnouchi}
\address{Department of Mathematics, Graduate School of Science, The University of Osaka,
1-1, Machikaneyama-cho, Toyonaka, Osaka 560-0043, Japan.}
\email{{\tt u122988d@ecs.osaka-u.ac.jp}}
\email{{\tt 20160312sti@gmail.com}}

\author{Shiyu Zhang}
\address{School of Science, Institute for Theoretical Sciences, Westlake University, Hangzhou 310030, China}
\email{{\tt zhangshiyu@westlake.edu.cn}}

\date{\today}

\subjclass[2020]{Primary 32J25, Secondary 32Q15, 14C30, 14E30}
\keywords{Miyaoka's inequality, Bogomolov--Gieseker inequality, Higgs bundles, orbifold Chern classes, complex orbifolds, klt K\"ahler varieties, complex spaces, Fujiki's class}

\begin{document}

\begin{abstract}
We study inequalities for orbifold second Chern classes of compact normal analytic varieties in Fujiki's class. We prove Miyaoka's inequality for singular varieties in Fujiki's class with nef canonical divisor, as well as the semipositivity of the orbifold second Chern class for varieties with nef anti-canonical divisor. To prove these results, we establish generic nefness theorems for tangent and cotangent sheaves and an orbifold Bogomolov--Gieseker inequality for mixed polarizations.
 \end{abstract}
 
\maketitle

\setcounter{tocdepth}{1}
\tableofcontents

\section{Introduction}

Miyaoka~\cite{Miy77} proved that a smooth complex surface \( S \) of general type satisfies the following inequality:
\begin{equation*}
    3c_2(S)-c_1(S)^2 \geq 0.
\end{equation*}
This inequality was generalized in \cite{Miy87} to higher-dimensional varieties. 
More precisely, let \( X \) be an \( n \)-dimensional normal non-uniruled projective variety that is smooth in codimension two. If the canonical divisor \( K_X \) is nef \(\mathbb{Q}\)-Cartier, then for any ample divisors \( H_1, \ldots, H_{n-2} \), we have
    \begin{equation}
    \label{equa-intro-miyaoka}
        \big(3c_2(X) - c_1(X)^2\big) \cdot H_1 \cdots H_{n-2} \geq 0.
    \end{equation}
This inequality \eqref{equa-intro-miyaoka} is often called \emph{Miyaoka's inequality}. It was later extended in \cite{IMM24} to the case where $X$ is a projective klt variety.

Inequalities involving the second Chern class, such as Miyaoka's inequality \eqref{equa-intro-miyaoka}, also play an important role in the Minimal Model Program (MMP), particularly in relation to the abundance conjecture in dimension three. In \cite{Miy88b, Miy88a}, Miyaoka used this inequality to prove the conjecture when the numerical Kodaira dimension is one. Later, Kawamata completed the three-dimensional abundance conjecture in \cite{Kaw92} by using Miyaoka's inequality for the orbifold second Chern class \( \widehat{c}_2(X) \). These techniques have also been applied to the abundance conjecture for three-dimensional K\"ahler varieties (cf.~\cite{CHP16, DO23, GP24}). 

Motivated by these works, we investigate in arbitrary dimension how far the singularity assumptions can be weakened while preserving inequalities involving the second Chern class for varieties that are not necessarily K\"ahler, with a view toward applications to the abundance conjecture.

First, we investigate Miyaoka's inequality \eqref{equa-intro-miyaoka} in the case where $K_X$ is nef. In this case, the inequality holds even when $X$ is not K\"ahler.
\begin{thm}
\label{thm-Miyaoka-ineq-nonuniruled}
Let $X$ be an $n$-dimensional compact normal complex analytic variety admitting a nef and big class. Assume one of the following conditions holds:
\begin{enumerate}[label=$(\arabic*)$]
\item $X$ has at most canonical singularities.
\item $X$ is a non-uniruled variety with at most quotient singularities in codimension $2$ and rational singularities, such that $K_X$ is $\mathbb{Q}$-Cartier.
\end{enumerate}

If $K_X$ is nef, then for any nef and big classes $\alpha_1, \ldots, \alpha_{n-2} \in H^{1,1}_{BC}(X)$, Miyaoka's inequality holds:
\[
\left(3\widehat{c}_2(X) - \widehat{c}_1(X)^2 \right)\cdot \alpha_1 \cdots \alpha_{n-2} \ge 0.
\]
\end{thm}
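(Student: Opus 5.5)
Following the abstract, the plan is to combine a generic nefness theorem for the cotangent sheaf with an orbifold Bogomolov--Gieseker inequality for Higgs sheaves under mixed polarizations, in the spirit of Miyaoka's original argument and its klt extension in \cite{IMM24}.

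\emph{Step 1: reductions.} Since $X$ carries a nef and big class, it lies in Fujiki's class, so it is bimeromorphic to a compact K\"ahler manifold. In case (1), canonical singularities are rational and $K_X$ is $\mathbb{Q}$-Cartier. Moreover, nefness of $K_X$ combined with canonical singularities forces $X$ to be non-uniruled; this holds via a bimeromorphic Kähler model and the result that uniruledness is detected by pseudoeffectivity of $K$. Klt varieties have quotient singularities in codimension $2$. Hence case (1) reduces to case (2). By multilinearity and continuity of the intersection pairing, we may also perturb each $\alpha_i$ to be nef and big with a Kähler-type positivity on a suitable resolution, and then pass to limits at the end. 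The orbifold classes $\widehat{c}_1,\widehat{c}_2$ are computed on the standard orbifold structure $X^\circ$ defined away from a codimension-$3$ subset. Intersection numbers against $\alpha_1\cdots\alpha_{n-2}$ are therefore well defined; here we use rationality of the singularities to pull back and push forward Bott--Chern classes.

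\emph{Step 2: generic nefness.} Because $X$ is non-uniruled, we apply the generic nefness theorem for the (orbifold) cotangent sheaf. Every torsion-free quotient $\Omega_X^{[1]}\to Q$ then satisfies $\widehat{c}_1(Q)\cdot\alpha_1\cdots\alpha_{n-1}\ge 0$ for all nef and big classes $\alpha_i$. Equivalently, all slopes in the Harder--Narasimhan filtration of $\Omega_X^{[1]}$ with respect to the mixed polarization $(\alpha_1,\dots,\alpha_{n-2},K_X+\varepsilon\omega)$ are nonnegative up to $O(\varepsilon)$.

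\emph{Step 3: Higgs sheaf and BG.} Following Miyaoka and Langer, consider the Higgs sheaf $\mathcal{E}=\Omega^{[1]}_X\oplus\mathcal{O}_X$ with the Higgs field given by the tautological map $\mathcal{O}_X\to\Omega^{[1]}_X$ (up to the usual twist). Using Step 2 and nefness of $K_X$, we show that $\mathcal{E}$ is semistable with respect to $(\alpha_1,\dots,\alpha_{n-2},K_X+\varepsilon\omega)$ for small $\varepsilon$. Alternatively, we run the HN-filtration argument: we take the graded pieces, apply Bogomolov--Gieseker to each, and combine with the nonnegativity of slopes as in \cite{Miy87}. The orbifold Bogomolov--Gieseker inequality for mixed polarizations, applied to $\mathcal{E}$, gives $\bigl(2r\widehat{c}_2-(r-1)\widehat{c}_1^2\bigr)\cdot\alpha_1\cdots\alpha_{n-2}\ge 0$ with $r=n+1$. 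Since $\widehat{c}_1(\mathcal{E})=K_X$ and $\widehat{c}_2(\mathcal{E})=\widehat{c}_2(X)$, this yields $\bigl(2(n+1)\widehat{c}_2-n\widehat{c}_1^2\bigr)\cdot\alpha\ge0$. To sharpen this to the constant $3$, we follow Miyaoka's estimate: we write $\widehat{c}_2$ via the graded pieces, use $c_1(G_i)\cdot K_X\cdot\alpha\ge0$ from generic nefness, and use the BG inequality on each piece, exploiting $K_X^2\cdot\alpha\ge0$. The limit $\varepsilon\to0$ then concludes the argument.

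\emph{Main obstacle.} The hardest step is the semistability/BG step in the nonprojective orbifold setting with degenerate, nef-but-not-Kähler polarizations. The first approach to try is to pass to a resolution on which the $\alpha_i$ become Kähler after perturbation, and to control orbifold Chern classes there via the codimension-$3$ structure. Verifying that this perturbation preserves the semistability needed is the delicate point.
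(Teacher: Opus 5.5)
Your Step~3 has a genuine gap, and it is the core of the argument. The main claim there is false. When $K_X$ is only nef, the Higgs sheaf $\Omega_X^{[1]}\oplus\mathcal{O}_X$ with $\theta(a,b)=(0,a)$ is in general not semistable with respect to $(K_X+\varepsilon\omega)\cdot\alpha_1\cdots\alpha_{n-2}$. Consequently the inequality you would deduce, $\bigl(2(n+1)\widehat{c}_2(X)-n\widehat{c}_1(X)^2\bigr)\cdot\alpha_1\cdots\alpha_{n-2}\ge 0$, fails; since $\widehat{\Delta}$ is invariant under twisting, no twist rescues it. For a counterexample take $X=S\times E$, with $S$ a compact ball quotient surface, $E$ an elliptic curve ($n=3$), and $\alpha_1$ ample. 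Then $c_2(X)=p^*c_2(S)$ and $c_1(X)^2=p^*c_1(S)^2=3p^*c_2(S)$, so $(8c_2(X)-3c_1(X)^2)\cdot\alpha_1=-c_2(S)\deg(\alpha_1|_E)<0$. Concretely, the $\theta$-invariant subsheaf $p^*\Omega^1_S\oplus\mathcal{O}_X$ has slope $\frac13 c_1(K_X)\cdot\gamma$, which exceeds the slope $\frac14 c_1(K_X)\cdot\gamma$ of the whole sheaf. Your ``sharpening to the constant $3$'' also runs backwards. Since $\frac{n}{2(n+1)}\ge\frac13$ and $c_1(K_X)^2\cdot\alpha_1\cdots\alpha_{n-2}\ge 0$, a Miyaoka--Yau inequality would already imply Miyaoka's. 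The paper obtains a Miyaoka--Yau type inequality only for the special polarization $c_1(K_X)^{\nu}\cdot\{\omega\}^{n-2-\nu}$ on K\"ahler $X$.

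Your fallback also does not reach the constant. It consists of the Harder--Narasimhan filtration with respect to $c_1(K_X)\cdot\Omega$, ordinary Bogomolov--Gieseker on each graded piece, and nonnegative slopes. With slopes $\mu_i$, ranks $r_i$, $\sum_i r_i\mu_i=c_1(K_X)^2\cdot\Omega$ and the Hodge index theorem, it only yields $2\widehat{c}_2(X)\cdot\Omega\ge(1-\frac{1}{r_1})\,c_1(K_X)^2\cdot\Omega$. This is too weak for $r_1\le 2$; for $r_1=1$ it gives merely $\widehat{c}_2(X)\cdot\Omega\ge 0$.

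The missing idea is to apply the Higgs construction not to $\Omega_X^{[1]}$ but to the maximal destabilizing subsheaf $\mathcal{G}_1\subset\Omega_X^{[1]}$. The Higgs sheaf $\mathcal{G}_1\oplus\mathcal{O}_X$, with $\theta(a,b)=(0,a)$ induced by $\mathcal{G}_1\hookrightarrow\Omega_X^{[1]}$, is stable because $\mathcal{G}_1$ is semistable of positive slope. The Higgs Bogomolov--Gieseker inequality for mixed polarizations (this is where rational singularities are needed) then gives $2\widehat{c}_2(\mathcal{G}_1^{\vee\vee})\cdot\Omega\ge\frac{r_1}{r_1+1}\widehat{c}_1(\mathcal{G}_1^{\vee\vee})^2\cdot\Omega$. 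This is a Bogomolov-lemma-type improvement; for $r_1=1$ it says $\widehat{c}_1(\mathcal{G}_1^{\vee\vee})^2\cdot\Omega\le 0$. Combine it with:
\begin{enumerate}
\item ordinary Bogomolov--Gieseker for $\mathcal{G}_i$, $i\ge 2$;
\item the exact-sequence inequality for $\widehat{c}_2$;
\item the Hodge index bound $\widehat{c}_1(\mathcal{G}_i^{\vee\vee})^2\cdot\Omega\le r_i^2\mu_i^2/(c_1(K_X)^2\cdot\Omega)$;
\item $\sum_{i\ge 2}r_i\mu_i^2\le\mu_1\sum_{i\ge 2}r_i\mu_i$, which is where generic nefness $\mu_i\ge 0$ enters.
\end{enumerate}
Minimizing over $\mu_1\le c_1(K_X)^2\cdot\Omega/r_1$ then gives $2\widehat{c}_2(X)\cdot\Omega\ge\frac23 c_1(K_X)^2\cdot\Omega$ for $r_1\ge 2$, and $\ge\frac34 c_1(K_X)^2\cdot\Omega$ for $r_1=1$.

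You must also treat the degenerate case $c_1(K_X)^2\cdot\Omega=0$ separately. There one uses the semipositivity $\widehat{c}_2(X)\cdot\Omega\ge 0$ coming from Bogomolov--Gieseker and the degenerate case of the Hodge index theorem. Finally, $X$ is not assumed K\"ahler, so $K_X+\varepsilon\omega$ is not available. No perturbation is needed, however: generic nefness holds directly for the nef class $c_1(K_X)\cdot\Omega$, and the mixed Bogomolov--Gieseker inequality already allows a nef $\beta$ with $\beta\cdot\Omega\not\equiv 0$.
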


Even when $X$ has klt singularities, Miyaoka's inequality holds under the additional assumption that $X$ is K\"ahler.
\begin{thm}
\label{thm-Miyaoka-klt-kahler}
Let \( (X, \omega) \) be an \( n \)-dimensional compact normal K\"ahler variety. Assume that \( K_X \) is nef and that \( X \) has at most klt singularities. Then there exists a constant \( \varepsilon_0 > 0 \), depending only on \( (X, \omega) \), such that for any \( 0 < \varepsilon < \varepsilon_0 \), Miyaoka's inequality holds:
\[
\left( 3\widehat{c}_2(X) - \widehat{c}_1(X)^2 \right) \cdot \left( c_1(K_X) + \varepsilon \{ \omega \} \right)^{n-2} \ge 0.
\]
\end{thm}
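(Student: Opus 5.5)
We would follow the strategy of \cite{IMM24} and \cite{GP24}, transplanted to the K\"ahler klt setting, with polarization $\beta_\varepsilon := c_1(K_X)+\varepsilon\{\omega\}$. This class is K\"ahler for every $\varepsilon>0$ because $K_X$ is nef. Two ingredients are needed.

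The first ingredient is a generic nefness statement for the reflexive cotangent sheaf $\Omega_X^{[1]}$ with respect to the movable class $\beta_\varepsilon^{n-1}$. The plan is to show that every torsion-free quotient $\Omega_X^{[1]}\to \mathcal Q$ satisfies $c_1(\mathcal Q)\cdot\beta_\varepsilon^{n-1}\ge 0$. Since $K_X$ is nef, $X$ is not uniruled: by the K\"ahler MMP results on uniruledness (\cite{CHP16, DO23} and their higher-dimensional versions), a klt K\"ahler variety with $K_X$ pseudoeffective is non-uniruled. The generic nefness theorem announced in the abstract for K\"ahler classes then gives the desired inequality.

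The second ingredient is the orbifold Bogomolov--Gieseker inequality for mixed polarizations, applied to the orbifold reflexive sheaf $\Omega_X^{[1]}$ on the quotient locus. Since a klt space has quotient singularities in codimension $2$, the orbifold Chern classes $\widehat c_2$ and $\widehat c_1^2$ are defined against products of K\"ahler classes.

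The core of the argument is Miyaoka's Harder--Narasimhan bookkeeping. We take the Harder--Narasimhan filtration $0=E_0\subset\cdots\subset E_k=\Omega^{[1]}_X$ with respect to $\beta_\varepsilon$, with graded pieces $G_i$ of slopes $\mu_1>\cdots>\mu_k$. By generic nefness, $\mu_k\ge 0$, and $\sum_i c_1(G_i)\cdot\beta_\varepsilon^{n-1}=K_X\cdot\beta_\varepsilon^{n-1}$. Applying Bogomolov--Gieseker to each semistable $G_i$ gives
\[
\widehat c_2(X)\cdot\beta_\varepsilon^{n-2}\ \ge\ \sum_{i<j} c_1(G_i)c_1(G_j)\beta_\varepsilon^{n-2}+\sum_i \frac{r_i-1}{2r_i}c_1(G_i)^2\beta_\varepsilon^{n-2}.
\]
We then need Hodge index to bound the products $c_1(G_i)c_1(G_j)\beta_\varepsilon^{n-2}$ below in terms of slopes. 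This is where the choice of polarization matters. The relevant inequality is $c_1(G)^2\cdot\beta_\varepsilon^{n-2}\le (c_1(G)\cdot\beta_\varepsilon^{n-1})^2/\beta_\varepsilon^n$, and Miyaoka's argument requires the \emph{correct direction} of Hodge index relative to $K_X$. Concretely, writing $c_1(G_i)=a_i K_X+(\text{part orthogonal to }\beta_\varepsilon)$ requires $K_X$ to be close to $\beta_\varepsilon$. At $\varepsilon=0$ we have exactly $\beta_0=K_X$, and the classical computation of \cite{Miy87} yields $3\widehat c_2-\widehat c_1^2\ge 0$ against $K_X^{n-2}$ when $K_X^n>0$. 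For small $\varepsilon$ we instead carry error terms of order $\varepsilon$ and absorb them into the leftover positive quantities from the Harder--Narasimhan convexity argument.

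The main obstacle is exactly this absorption, together with the case where $K_X$ is not big. The Harder--Narasimhan filtration depends on $\varepsilon$, but only finitely many filtrations occur: slopes are controlled by the boundedness of saturated subsheaves, so the set of possible $c_1(E_i)$ is finite near $\varepsilon=0$. This lets us choose $\varepsilon_0$ uniformly, and it is the reason $\varepsilon_0$ depends only on $(X,\omega)$. When $\nu(K_X)<n$, we would work with the full mixed expression $(K_X+\varepsilon\omega)^{n-2}$ expanded in $\varepsilon$, following the Miyaoka-style case analysis of \cite{IMM24}. The goal is the estimate in the form $(3\widehat c_2-\widehat c_1^2)\beta_\varepsilon^{n-2}\ge -C\varepsilon\cdot(\text{terms that vanish})$. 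Where that fails, we would fall back on proving the inequality for $\varepsilon$ with each $\beta_\varepsilon$ treated as its own polarization, keeping the negative contributions $\ge 0$ through the Hodge index bound with constant uniform in small $\varepsilon$.
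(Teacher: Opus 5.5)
Your proposal has a genuine gap, and it is the step you yourself call the main obstacle. You take the Harder--Narasimhan filtration and slopes with respect to the K\"ahler class $\beta_\varepsilon^{n-1}$, so the Hodge index bound compatible with your slopes is the one with respect to $\beta_\varepsilon$: $\widehat c_1(\mathcal{G}_i^{\vee\vee})^2\cdot\beta_\varepsilon^{n-2}\le (r_i\mu_i)^2/\beta_\varepsilon^n$, with $\sum_i r_i\mu_i=c_1(K_X)\cdot\beta_\varepsilon^{n-1}$. Run Miyaoka's optimization with this bound, even using the Higgs bound for $\mathcal{G}_1$. In the extremal configuration $r_1=2$, $\mu_2=\cdots=\mu_l=0$ you only get
\[
2\widehat c_2(X)\cdot\beta_\varepsilon^{n-2}\ \ge\ \tfrac23\, c_1(K_X)^2\cdot\beta_\varepsilon^{n-2}-\tfrac13\left(\frac{(c_1(K_X)\cdot\beta_\varepsilon^{n-1})^2}{\beta_\varepsilon^{n}}-c_1(K_X)^2\cdot\beta_\varepsilon^{n-2}\right).
\]
The bracket is $\ge 0$ by the Hodge index theorem, so it has the wrong sign, and in this configuration the convexity argument leaves no slack to absorb it. For $K_X$ big it equals $\varepsilon^2(a_1^2-a_0a_2)/a_0+O(\varepsilon^3)$, where $a_k=c_1(K_X)^{n-k}\cdot\{\omega\}^k$, and this is nonzero in general. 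So you only obtain $P(\varepsilon)\ge -C\varepsilon^2$ for $P(\varepsilon)=(3\widehat c_2(X)-\widehat c_1(X)^2)\cdot\beta_\varepsilon^{n-2}$. That says nothing about the sign of $P$ on $(0,\varepsilon_0)$ once $P$ vanishes to order $\ge 2$ at $0$. Your fallback of treating each $\beta_\varepsilon$ as its own polarization is the same computation and hits the same wall.

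The missing idea is to change the polarization rather than estimate errors. Take the Harder--Narasimhan filtration of $\Omega_X^{[1]}$ with respect to the mixed class $c_1(K_X)\cdot\beta_\varepsilon^{n-2}$, and set $\Omega:=\beta_\varepsilon^{n-2}$. Then $\sum_i r_i\mu_i=c_1(K_X)^2\cdot\Omega$, and Proposition~\ref{prop-Hodge-index} is applied with $\beta=c_1(K_X)$. The computation in the proof of Theorem~\ref{thm-Miyaoka-ineq-nonuniruled} then closes with no error term. The case $\nu(K_X)\le 1$, where $c_1(K_X)^2\cdot\Omega=0$, is just Corollary~\ref{cor-semipositivity-c2}(2), so no expansion in $\varepsilon$ is needed. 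The price is twofold.

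\emph{First}, you need $\mu^{\min}_{c_1(K_X)\cdot\beta_\varepsilon^{n-2}}(\Omega_X^{[1]})\ge 0$ for a polarization that is not K\"ahler. The paper derives this from Theorem~\ref{thm-Enoki-genericnef}, which gives generic nefness for $\beta_\varepsilon^{n-1}$, by contradiction. The maximal destabilizing subsheaf $\mathcal{E}\subset\mathcal{T}_X$ stabilizes as $\varepsilon\to 0$. Positivity of $F(x_1,\dots,x_{n-1})=c_1(\mathcal{E})\cdot\prod_i(c_1(K_X)+x_i\{\omega\})$ at $(0,\varepsilon_k,\dots,\varepsilon_k)$ then forces $F(\varepsilon,\dots,\varepsilon)>0$, contradicting that theorem. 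This is where $\varepsilon_0$ comes from.

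\emph{Second}, you need the Bogomolov--Gieseker inequality for mixed polarizations with $c_1(K_X)$ only nef (Corollary~\ref{cor-BGinequality-mixed}). It is applied to the semistable pieces and to the Higgs sheaf $(\mathcal{G}_1\oplus\mathcal{O}_X,\theta)$ with $\theta(a,b)=(0,a)$. Your displayed inequality, which uses only $\frac{r_i-1}{2r_i}$ for every piece, is too weak to give the constant $3$ when $r_1\le 2$; for $r_1=1$ it yields only $\widehat c_2(X)\cdot\Omega\ge 0$.

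Finally, your justification of generic nefness via non-uniruledness is wrong. A klt K\"ahler variety with $K_X$ nef can be uniruled: the Ueno--Campana threefold \cite{Cam11} has $K_X\equiv 0$. You must invoke Theorem~\ref{thm-Enoki-genericnef} directly, not a foliation argument.
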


Here, $\widehat{c}_2(X)$ and $\widehat{c}_1(X)^2$ denote the orbifold Chern classes introduced in \cite{Kaw92, GKPT19b, GK20}. When $X$ is smooth in codimension two, these classes coincide with the usual Chern classes. Therefore, \cite{Miy87} follows from Theorem \ref{thm-Miyaoka-ineq-nonuniruled}.
At present, orbifold Chern classes can be defined for klt varieties, more precisely for varieties with quotient singularities in codimension two. Therefore, the assumptions on the singularities in Theorems \ref{thm-Miyaoka-ineq-nonuniruled} and \ref{thm-Miyaoka-klt-kahler} cannot currently be weakened any further. 

For the case where $-K_X$ is nef, we obtain the following inequality concerning the semipositivity of the orbifold second Chern class, even when $X$ is not K\"ahler.
During the preparation of an earlier version of this paper (see also Remark~\ref{rem-earlier} below), the same inequality was independently obtained in \cite[Theorem~5.3]{MWWZ25} under the assumption that $X$ is a compact K\"ahler manifold.

\begin{thm}
\label{thm-nef-anti-canonical}
Let $X$ be an $n$-dimensional compact normal complex analytic variety that admits a nef and big class.
If $-K_X$ is nef and $X$ has at most klt singularities, then for any nef and big classes $\alpha_1, \ldots,\alpha_{n-2} \in H^{1,1}_{BC}(X)$, the following inequality holds:
\[
\widehat{c}_2(X) \cdot \alpha_1 \cdots \alpha_{n-2}\ge 0.
\]
\end{thm}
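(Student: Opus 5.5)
We argue as in the projective and compact K\"ahler cases with $-K_X$ nef: combine generic nefness of the tangent sheaf with the orbifold Bogomolov--Gieseker inequality for mixed polarizations, announced in the abstract. Since $X$ is klt, it has quotient singularities in codimension two, so $\widehat{c}_2(X)$ and $\widehat{c}_1(X)^2$ are defined. By multilinearity and continuity of the intersection pairing, we may replace each $\alpha_i$ by $\alpha_i+\delta\beta$, where $\beta$ is a fixed nef and big class, and let $\delta\to 0$ at the end. A standard perturbation then lets us work with classes for which the Harder--Narasimhan filtration of $T_X$ with respect to $(\alpha_1,\dots,\alpha_{n-2},\cdot)$ is defined and behaves well.

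\textbf{Step 1.} Take the Harder--Narasimhan filtration $0=\mathcal F_0\subset\cdots\subset\mathcal F_k=T_X$ with respect to the mixed polarization $\alpha_1\cdots\alpha_{n-1}$, where $\alpha_{n-1}$ is an auxiliary nef and big class. The graded pieces $\mathcal G_i=\mathcal F_i/\mathcal F_{i-1}$ are reflexive on the big open set where the filtration is by subbundles, and they are semistable with slopes $\mu_1>\dots>\mu_k$. Because $-K_X$ is nef, the generic nefness theorem for tangent sheaves should give $\mu_{\min}=\mu_k\ge 0$. This is the analogue of Ou's and Peternell's results, and we expect it to have been proved earlier for Fujiki-class varieties via a bimeromorphic Kähler model. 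In fact all the $\mu_i$ are then nonnegative.

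\textbf{Step 2.} Apply the orbifold Bogomolov--Gieseker inequality for mixed polarizations to each $\mathcal G_i$ (reflexive hull, rank $r_i$). This requires passing to quasi-étale/orbifold charts in codimension two so that $\widehat{c}_2$ is additive. It gives
\[
\Big(2r_i\widehat c_2(\mathcal G_i)-(r_i-1)\widehat c_1(\mathcal G_i)^2\Big)\cdot\alpha_1\cdots\alpha_{n-2}\ge 0 .
\]
Then use additivity,
\[
\widehat c_2(X)=\sum_i\widehat c_2(\mathcal G_i)+\sum_{i<j}\widehat c_1(\mathcal G_i)\widehat c_1(\mathcal G_j),
\]
so that
\[
\widehat c_2(X)\cdot\alpha\ \ge\ \tfrac12\Big(\textstyle\sum_i\frac{r_i-1}{r_i}\widehat c_1(\mathcal G_i)^2+2\sum_{i<j}\widehat c_1(\mathcal G_i)\widehat c_1(\mathcal G_j)\Big)\cdot\alpha,
\]
where $\alpha=\alpha_1\cdots\alpha_{n-2}$. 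The remaining task is to show that the right-hand side is $\ge 0$.

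\textbf{Step 3: the main obstacle.} The products $\widehat c_1(\mathcal G_i)\widehat c_1(\mathcal G_j)\cdot\alpha$ need not be controlled by slopes alone, because the Hodge index inequality runs the wrong way for mixed terms. The trick, following Ou and the recent K\"ahler argument, is to choose the auxiliary class $\alpha_{n-1}=c_1(-K_X)+\varepsilon\beta$, i.e.\ polarize by the anticanonical direction. With this choice, $\sum_i\widehat c_1(\mathcal G_i)=c_1(-K_X)$, and each $\widehat c_1(\mathcal G_i)\cdot c_1(-K_X)\cdot\alpha$ is close to $r_i\mu_i\ge0$. Write $L=c_1(-K_X)$ and $D_i=\widehat c_1(\mathcal G_i)$, so that $\sum_i D_i=L$. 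The quadratic expression rewrites as
\[
\tfrac12\Big(L^2-\textstyle\sum_i D_i^2/r_i\Big)\cdot\alpha .
\]
Apply the Hodge index theorem with respect to $L+\varepsilon\beta$: $D_i^2\cdot\alpha\le (D_i\cdot L\cdot\alpha)^2/(L^2\cdot\alpha)$. Combined with $D_i\cdot L\cdot\alpha\ge0$ and $\sum_i D_i\cdot L\cdot\alpha=L^2\cdot\alpha$, we get $\sum_i D_i^2/r_i\cdot\alpha\le L^2\cdot\alpha$, since $\sum_i x_i^2/r_i\le(\sum_i x_i)^2$ for $x_i\ge0$, $r_i\ge1$. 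We then let $\varepsilon,\delta\to0$.

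The delicate points are the following. First, Hodge index for nef classes on singular Fujiki-class varieties; we would pull back to a resolution and work there. Second, the degenerate case $L^2\cdot\alpha=0$, which we would handle via the $\varepsilon\beta$ perturbation. Third, the uniformity of the generic nefness statement for this mixed polarization.
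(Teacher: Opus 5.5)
Your strategy is the paper's. The paper combines generic nefness of $\mathcal{T}_X$ for an arbitrary nef polarization (Proposition~\ref{prop-genericnef--antinef}) with the Harder--Narasimhan, Bogomolov--Gieseker and Hodge index computation of Corollary~\ref{cor-semipositivity-c2}, polarizing by $c_1(-K_X)\cdot\Omega$.

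\textbf{Step~3 does not go through as written.} Your filtration is taken with respect to $\beta_\varepsilon\cdot\Omega$, where $\beta_\varepsilon:=L+\varepsilon\beta$. Generic nefness therefore gives $D_i\cdot\beta_\varepsilon\cdot\Omega=r_i\mu_i\ge0$, not $D_i\cdot L\cdot\Omega\ge0$. The difference $\varepsilon\,D_i\cdot\beta\cdot\Omega$ has no sign. Since the filtration, and hence $D_i$, changes with $\varepsilon$, being ``close to $r_i\mu_i$'' cannot be passed to the limit unless you first prove that the filtration stabilizes for small $\varepsilon$. Also, the Hodge index inequality you display has $L^2\cdot\Omega$ in the denominator, so it is unavailable exactly in the degenerate case.

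The repair is to keep $\beta_\varepsilon$ everywhere. It is nef and big, so $\beta_\varepsilon^2\cdot\Omega>0$. Proposition~\ref{prop-Hodge-index}(1) then gives $D_i^2\cdot\Omega\le x_i^2/(\beta_\varepsilon^2\cdot\Omega)$, with $x_i:=r_i\mu_i\ge0$ and $\sum_i x_i=L\cdot\beta_\varepsilon\cdot\Omega$. Your elementary inequality then yields
\[
2\,\widehat c_2(X)\cdot\Omega\ \ge\ L^2\cdot\Omega-\frac{(L\cdot\beta_\varepsilon\cdot\Omega)^2}{\beta_\varepsilon^2\cdot\Omega}.
\]
The left side does not depend on $\varepsilon$, and the right side tends to $0$ as $\varepsilon\to0$; it is $O(\varepsilon)$ when $L^2\cdot\Omega=0$. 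This also removes your third concern about uniformity, because Proposition~\ref{prop-genericnef--antinef} applies to each nef class $L+\varepsilon\beta$ separately. The $\delta$-perturbation is unnecessary, since the $\alpha_i$ are already nef and big.

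\textbf{A second correction: $\widehat c_2$ is not additive along a filtration with merely torsion-free quotients.} After reflexive pullback to an orbifold modification, the sequence becomes $0\to\mathcal E_{\orb}\to\mathcal F_{\orb}\to\mathcal G_{\orb}\to\mathcal T_{\orb}\to0$, with $\mathcal T_{\orb}$ supported in codimension two. Only the following inequality holds (Proposition~\ref{prop-exact-sequence}, via orbifold Grothendieck--Riemann--Roch and nefness of the $\alpha_i$):
\[
\widehat c_2(\mathcal F)\cdot\Omega\ \ge\ \big(\widehat c_2(\mathcal E)+\widehat c_2(\mathcal G^{\vee\vee})+\widehat c_1(\mathcal E)\cdot\widehat c_1(\mathcal G^{\vee\vee})\big)\cdot\Omega .
\]
Equality already fails for $0\to\mathcal O(-2)\to\mathcal O(-1)^{\oplus2}\to\mathcal I_p\to0$ on $\mathbb P^2$. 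Fortunately the inequality goes in the direction you need.

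\textbf{Comparison with the paper.} With both fixes, your argument is a valid variant of the paper's. The paper filters with respect to $L\cdot\Omega$ itself, which gives $D_i\cdot L\cdot\Omega\ge0$ exactly. The price is a case split:
\begin{enumerate}
\item $L^2\cdot\Omega>0$, where it even obtains $\widehat c_2(X)\cdot\Omega\ge\frac{r_1-1}{2r_1}L^2\cdot\Omega$;
\item $L^2\cdot\Omega=0$ with $L\cdot\Omega\not\equiv0$, handled by Hodge index part~(2);
\item $L\cdot\Omega\equiv0$.
\end{enumerate}
Your perturbation always works with a big polarization, so it replaces the case split by a limit and loses only the sharper constant.
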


As a corollary, we obtain the Miyaoka--Yau type inequality. 
To formulate this inequality, for a nef class $\alpha$ on $X$, we define its numerical dimension by
\[
\nu(\alpha) := \max \left\{ 0 \le k \le \dim X \;\middle|\; \alpha^k \not \equiv 0 \right\}.
\]
In terms of this notion, we obtain the following Miyaoka--Yau type inequality.

\begin{cor}
\label{cor-MY-canonical}
Let $(X, \omega)$ be an $n$-dimensional compact klt K\"ahler variety with nef canonical divisor $K_X$. Set $\nu := \min\{ \nu(c_1(K_X)), n-2 \}$. Then the following Miyaoka--Yau type inequality holds:
\begin{equation*}
\left(2(n+1)\widehat{c}_2(X) - n\widehat{c}_1(X)^2\right) \cdot c_1(K_{X})^{\nu} \cdot \{\omega\}^{n-2-\nu} \ge 0.
\end{equation*}
\end{cor}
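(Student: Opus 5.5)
We would deduce Corollary~\ref{cor-MY-canonical} from Theorem~\ref{thm-Miyaoka-klt-kahler} together with a Bogomolov--Gieseker-type inequality, writing the target class as a combination of the two. The starting identity is
\[
2(n+1)\widehat{c}_2 - n\widehat{c}_1^2 = \tfrac{2(n+1)}{3}\bigl(3\widehat{c}_2-\widehat{c}_1^2\bigr) - \tfrac{n-2}{3}\,\widehat{c}_1^2 ,
\]
so with $\beta := c_1(K_X)^{\nu}\cdot\{\omega\}^{n-2-\nu}$ it suffices to show $(3\widehat{c}_2-\widehat{c}_1^2)\cdot\beta\ge 0$ and $\widehat{c}_1(X)^2\cdot\beta\le 0$. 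We keep the sign conventions in mind: $\widehat{c}_1(X)^2 = c_1(K_X)^2$.

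\textbf{The term $\widehat{c}_1^2\cdot\beta$.} We have $\widehat{c}_1^2\cdot\beta = c_1(K_X)^{\nu+2}\cdot\{\omega\}^{n-2-\nu}$. If $\nu=\nu(K_X)\le n-2$, then $\nu+2>\nu(K_X)$, so $c_1(K_X)^{\nu+2}\equiv 0$ and the term vanishes. If instead $\nu=n-2<\nu(K_X)$, the term is $c_1(K_X)^n\ge 0$. This is the wrong sign, so in that case we must compare directly rather than discard it.

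\textbf{The case $\nu = n-2$.} Here we instead use the orbifold Bogomolov--Gieseker inequality for the semistable reflexive sheaf $\Omega_X^{[1]}$, or more precisely the Higgs-sheaf version $\Omega^{[1]}_X\oplus\mathcal{O}_X$ with respect to the nef polarization $K_X$. This is the mixed-polarization Bogomolov--Gieseker inequality that the abstract announces. Applied to the rank $n+1$ Higgs sheaf with $c_1=c_1(K_X)$ and $c_2=\widehat{c}_2$, it gives exactly
\[
\bigl(2(n+1)\widehat{c}_2 - n\widehat{c}_1^2\bigr)\cdot c_1(K_X)^{n-2}\ge 0.
\]
The main obstacle is that $K_X$ is only nef, not Kähler. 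We would therefore apply the inequality with polarizations $K_X+\varepsilon\omega$, which is where semistability of the Higgs sheaf for small $\varepsilon$ comes from, and let $\varepsilon\to 0$. The limit passage is harmless because the intersection numbers are polynomial in $\varepsilon$.

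\textbf{The case $\nu=\nu(K_X)<n-2$.} In this case we expand Theorem~\ref{thm-Miyaoka-klt-kahler}, namely $(3\widehat{c}_2-\widehat{c}_1^2)\cdot(c_1(K_X)+\varepsilon\{\omega\})^{n-2}\ge 0$ for all small $\varepsilon>0$, as a polynomial in $\varepsilon$. The terms with $c_1(K_X)^k$ for $k>\nu$ should vanish: $c_1(K_X)^k\equiv 0$, and $\widehat{c}_2\cdot c_1(K_X)^{k}$ also vanishes when $k>\nu$, via the generic nefness / pseudoeffectivity results proved earlier. The lowest-order surviving coefficient is then $\binom{n-2}{\nu}(3\widehat{c}_2-\widehat{c}_1^2)\cdot\beta\,\varepsilon^{n-2-\nu}$, and since the polynomial is nonnegative for small $\varepsilon$, this coefficient is $\ge 0$. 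Combined with $\widehat{c}_1^2\cdot\beta=0$ and the identity above, this gives the claim.

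The delicate point in this case is justifying the vanishing of $\widehat{c}_2\cdot c_1(K_X)^{k}$ for $k>\nu$. If that vanishing is not available, we would instead run the Higgs Bogomolov--Gieseker argument uniformly with polarizations $K_X+\varepsilon\omega$, which handles both cases at once.
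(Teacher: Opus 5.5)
\textbf{There is a genuine gap in your case $\nu=n-2$.} As you set it up, this case contains $\nu(K_X)\in\{n-2,n-1\}$, where $K_X$ is not big. There the Higgs sheaf $\mathcal{E}=\Omega_X^{[1]}\oplus\mathcal{O}_X$ with $\theta(a,b)=(0,a)$ is in general not semistable for any of the polarizations you propose.

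For example, take $X=S\times E$ with $K_S$ ample and $E$ an elliptic curve, so $n=3$, $\nu(K_X)=2$ and $\nu=1$. The subsheaf $p_1^*\Omega_S\oplus\mathcal{O}_X\subset\mathcal{E}$ (with $\mathcal{O}_X$ the second summand of $\mathcal{E}$) is $\theta$-invariant, of rank $3$, with first Chern class $c_1(K_X)$. Its slope $\frac13\,c_1(K_X)\cdot\gamma$ therefore exceeds $\mu_\gamma(\mathcal{E})=\frac14\,c_1(K_X)\cdot\gamma$ both for $\gamma=c_1(K_X)\cdot(c_1(K_X)+\varepsilon\{\omega\})$ and for $\gamma=(c_1(K_X)+\varepsilon\{\omega\})^2$, because $c_1(K_X)^2\cdot\{\omega\}>0$.

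Perturbing the polarization cannot produce semistability; openness only preserves stability. So the claim that semistability ``comes from'' $K_X+\varepsilon\omega$ is unfounded, and your fallback of running the Higgs argument uniformly fails for the same reason. Even when $K_X$ is big, what makes the Higgs sheaf (semi)stable is the semistability of $\Omega_X^{[1]}$ with respect to $c_1(K_X)^{n-1}$. That is a deep input (K\"ahler--Einstein metrics on the canonical model), and nothing in this paper supplies it. Granted that input, Corollary~\ref{cor-BGinequality-mixed}(2) applies directly to the nef and big class $c_1(K_X)$, with no $\varepsilon$ needed.

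\textbf{The repair, which is the paper's argument,} is to split according to whether $K_X$ is big, not according to the value of $\nu$.
\begin{itemize}
\item If $K_X$ is big, then $X$ is a Moishezon K\"ahler variety with rational singularities, hence projective by \cite[Theorem 6]{Nam01}. The inequality (with $\nu=n-2$) is then \cite[Theorem 1.1]{GKPT19b}.
\item If $K_X$ is not big, then $c_1(K_X)^n=0$, so $\widehat{c}_1(X)^2\cdot c_1(K_X)^{\nu}\cdot\{\omega\}^{n-2-\nu}=0$ in every case, including $\nu=n-2$. Your ``wrong sign'' occurs only when $\nu(K_X)=n$. Theorem~\ref{thm-Miyaoka-klt-kahler} with the $\varepsilon$-expansion then gives $(3\widehat{c}_2(X)-\widehat{c}_1(X)^2)\cdot c_1(K_X)^{\nu}\cdot\{\omega\}^{n-2-\nu}\ge 0$; for $\nu=n-2$, simply let $\varepsilon\to 0$. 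Your identity then finishes the proof.
\end{itemize}
The non-big case coincides with your treatment of $\nu<n-2$. One correction there: the vanishing of $\widehat{c}_2(X)\cdot c_1(K_X)^{k}\cdot\{\omega\}^{n-2-k}$ for $k>\nu(K_X)$ is not a consequence of generic nefness. It is a positivity property of powers of the nef class $c_1(K_X)$: a numerically trivial power of a nef class on a compact K\"ahler orbifold modification vanishes in cohomology. The paper uses this tacitly as well.
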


\begin{cor}
\label{cor-MY-anti-canonical}
Let $(X, \omega)$ be an $n$-dimensional compact klt K\"ahler variety with nef anti-canonical divisor $-K_X$. Set $\nu := \min\{ \nu(c_1(-K_X)), n-2 \}$. Assume one of the following conditions holds:
\begin{enumerate}[label=$(\arabic*)$]
\item $-K_X$ is big and $X$ is K-semistable.
\item $-K_X$ is not big.
\end{enumerate}
Then the following Miyaoka--Yau type inequality holds:
\begin{equation*}
\left(2(n+1)\widehat{c}_2(X) - n\widehat{c}_1(X)^2\right) \cdot c_1(-K_{X})^{\nu} \cdot \{\omega\}^{n-2-\nu} \ge 0.
\end{equation*}
\end{cor}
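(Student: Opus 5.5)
The strategy is to split according to the two cases and combine Theorem~\ref{thm-nef-anti-canonical} with a Bogomolov--Gieseker type inequality for the tangent sheaf. Set $L := c_1(-K_X) = \widehat{c}_1(X)$, which is nef, and consider the test class $\beta := L^{\nu}\cdot\{\omega\}^{n-2-\nu}$. The target inequality reads
\[
2(n+1)\widehat{c}_2(X)\cdot\beta \;\ge\; n\, L^2\cdot\beta .
\]

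\textbf{Case (2): $-K_X$ not big.} I expect the right-hand side to vanish. If $\nu(L) \le n-2$, then $\nu = \nu(L)$ and $L^2\cdot\beta = L^{\nu+2}\cdot\{\omega\}^{n-2-\nu} = 0$. Indeed, $L^{\nu+1}\equiv 0$ by the definition of numerical dimension, and this implies $L^{\nu+2}\equiv 0$.

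If $\nu(L) = n-1$, then $\nu = n-2$ and $L^2\cdot\beta = L^n = 0$, since $L$ is nef and not big. (For a nef class on a compact Kähler variety, bigness is equivalent to $L^n > 0$.)

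So it suffices to prove $\widehat{c}_2(X)\cdot\beta \ge 0$. This is Theorem~\ref{thm-nef-anti-canonical} applied with $\alpha_i \in \{L, \{\omega\}\}$. However, $L$ is only nef, not big, so I will perturb: take $\alpha_i = L+\varepsilon\{\omega\}$ or $\{\omega\}$, which are Kähler and hence nef and big. Then let $\varepsilon\to 0$, using multilinearity and continuity of the intersection numbers.

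\textbf{Case (1): $-K_X$ big and $X$ K-semistable.} Now $X$ is a klt Fano variety (nef and big $-K_X$ on a klt Kähler space; by basepoint-freeness $-K_X$ is semiample, and K-semistability is meant for the anticanonical model, or directly if $-K_X$ is ample). Also $\nu(L) = n$, so $\nu = n-2$ and $\beta = L^{n-2}$.

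The plan is to invoke the known statement that for a K-semistable klt Fano variety, the tangent sheaf $T_X$ is semistable with respect to $-K_X$. Alternatively, one can use the existence of a singular Kähler--Einstein metric, or the perturbation trick with the Higgs sheaf $T_X\oplus\mathcal{O}_X$ used for Miyaoka--Yau.

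The key device is to take the Higgs sheaf $\mathcal{E} = T_X\oplus\mathcal{O}_X$ with the natural Higgs field $\theta(v,f) = (f\cdot\text{?},\,\ldots)$, built from the canonical section. More concretely, I would cite the orbifold Bogomolov--Gieseker inequality for mixed polarizations established later in the paper, applied to the semistable sheaf $T_X$ of rank $n$. This gives
\[
\left(2n\,\widehat{c}_2 - (n-1)\widehat{c}_1^2\right)\cdot L^{n-2} \ge 0 .
\]
That inequality is weaker than the required one, since $2n\widehat c_2 \ge (n-1)\widehat c_1^2$ gives only the coefficient $(n-1)/(2n) < n/(2(n+1))$. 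To get the sharp constant, one uses instead that $\mathcal{E} = T_X\oplus\mathcal{O}_X$, with Higgs field induced by the extension class $c_1(K_X)$ (the Atiyah/canonical extension), is semistable of rank $n+1$ as a Higgs sheaf with respect to $-K_X$. This semistability is equivalent to K-semistability, by the Kähler--Einstein or Li--Tian–Wang results on the singular case.

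The Bogomolov--Gieseker inequality for rank $n+1$ then gives
\[
\left(2(n+1)\widehat{c}_2(\mathcal{E}) - n\,\widehat{c}_1(\mathcal{E})^2\right)\cdot L^{n-2}\ge 0 ,
\]
and $\widehat{c}_i(\mathcal{E}) = \widehat{c}_i(X)$, which is exactly the claim.

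\textbf{Main obstacle.} The main obstacle is establishing this semistability of the extension sheaf for a singular K-semistable (possibly only weak) Fano. I would try to pass to the anticanonical model $X\to X_{\mathrm{can}}$ and use the singular Kähler--Einstein metric of Li--Tian–Wang. Alternatively, I would use a degeneration to a K-polystable Fano, together with openness of semistability, and lower semicontinuity to transfer the inequality back.
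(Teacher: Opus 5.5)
Your case (2) is correct and matches the paper up to a cosmetic change. The paper applies Theorem~\ref{thm-nef-anti-canonical} with all $\alpha_i=c_1(-K_X)+\varepsilon\{\omega\}$, uses $c_1(-K_X)^n=0$, and reads off the coefficient of $\varepsilon^{n-2-\nu}$. Your mixed choice $(L+\varepsilon\{\omega\})^{\nu}\cdot\{\omega\}^{n-2-\nu}$ gives the same conclusion directly.

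Case (1), however, has a genuine gap, because the rank-$(n+1)$ object you describe does not work. A Higgs field on the split sheaf $\mathcal{T}_X\oplus\mathcal{O}_X$ is the wrong tool in the anticanonical setting. The natural one, $\mathcal{O}_X\to\mathcal{T}_X\otimes\Omega_X^{[1]}$ with $1\mapsto\mathrm{id}$, kills $\mathcal{T}_X\oplus 0$. So this subsheaf is $\theta$-invariant of slope $L^n/n>L^n/(n+1)$, and the Higgs sheaf is unstable. The trick in the proof of Theorem~\ref{thm-Miyaoka-ineq-nonuniruled} works only because there the Higgs field sends the positive piece $\mathcal{G}_1\subset\Omega_X^{[1]}$ into $\mathcal{O}_X\otimes\Omega_X^{[1]}$.

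What is needed instead is the non-split canonical extension $0\to\mathcal{O}_X\to\mathcal{E}_X\to\mathcal{T}_X\to 0$ given by $c_1(-K_X)$. This is an honest holomorphic extension, with no Higgs field. Its semistability with respect to $c_1(-K_X)^{n-1}$ is precisely the deep input that K-semistability must provide, and you leave it as the unresolved ``main obstacle''. The claimed equivalence with K-semistability is neither available nor needed; only the implication is needed, and that is the hard part.

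Even granting semistability, a further step is missing. Corollary~\ref{cor-BGinequality-mixed} bounds $\widehat{c}_2(\mathcal{E}_X)$, whereas Proposition~\ref{prop-exact-sequence} only yields $\widehat{c}_2(\mathcal{E}_X)\cdot L^{n-2}\ge\widehat{c}_2(X)\cdot L^{n-2}$, which is the wrong direction. You would need equality. One way is to check that, outside a closed subset of codimension $\ge 3$, the extension lifts to an extension of orbi-bundles on the local orbifold charts.

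You also never establish that $X$ is projective. This is required before K-semistability, the basepoint-free theorem, or an anticanonical model make sense. Your plan to pass to $X_{\mathrm{can}}$ would additionally require comparing $\widehat{c}_2$ on $X$ and on $X_{\mathrm{can}}$ across the crepant contraction.

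The paper closes case (1) as follows. Since $-K_X$ is big, $X$ is Moishezon, and klt singularities are rational, so $X$ is projective by Namikawa's criterion \cite[Theorem 6]{Nam01}. The inequality is then exactly the Miyaoka--Yau inequality for projective K-semistable klt varieties with nef and big $-K_X$, \cite[Theorem 1.2]{IJZ25} (cf.\ \cite[Proposition 1.1]{GKP22}, \cite[Theorem 9]{DGP24}), which the paper quotes rather than reproves. To repair your proposal, replace the case (1) sketch by this reduction.
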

In particular, the inequality in \cite[Theorem 1.1]{Hisa24} follows from Corollary~\ref{cor-MY-anti-canonical}. Note that the \(K\)-semistability assumption in Corollary~\ref{cor-MY-anti-canonical}~(1) is necessary even when $-K_X$ is ample. The inequality in Corollary~\ref{cor-MY-canonical} was already established in \cite[Theorem 1.1]{ZZZ25}. The proof in  \cite{ZZZ25} relies on analytic methods involving metrics, whereas we give a more elementary proof.


We now state the strategy of the proof of Theorems~\ref{thm-Miyaoka-ineq-nonuniruled}--\ref{thm-nef-anti-canonical}. 
The key ingredients of the proof are to control the minimal slope by establishing a generic nefness theorem and to control orbifold Chern classes by using orbifold structures. For the generic nefness theorem, we apply recent advances in the theory of foliations on K\"ahler manifolds due to \cite{Ou25, CP25}. For orbifold Chern classes, we establish their additivity with respect to exact sequences by using the orbifold Grothendieck--Riemann--Roch formula, and prove the Bogomolov--Gieseker inequality for orbifold Chern classes with respect to mixed polarizations, based on the ideas of \cite{CW24, ZZZ25}.
Combining these results with the Chern number calculations in \cite{Lan04}, we obtain the desired inequalities.

\begin{rem}
\label{rem-earlier}
An earlier version of the results in this paper appeared in the second part of \cite{IJZ25}, originally posted as arXiv:2507.08522. 
The original paper was subsequently divided into two papers.
The first paper \cite{IJZ25} studies the Miyaoka--Yau inequality for big (anti-)canonical divisors, whereas the present paper is devoted to the semipositivity of Chern classes and Miyaoka's inequality for nef (anti-)canonical divisors.
\end{rem}

\subsection*{Acknowledgments}
M.\,I.\ and S.\,J.\ express their gratitude to Prof.\ Ryushi Goto for valuable discussions and for answering their questions. 
S.\,Z.\ expresses his gratitude to Prof. Xi Zhang for constant encouragement.
M.\,I.\ was supported by the Grant-in-Aid for Early Career Scientists, No.\ 22K13907. S.\,Z. was supported by NSFC No. 12401073.

The authors made limited use of ChatGPT 5.6 Plus, a large language model, for language editing and mathematical discussions. All AI-assisted text was carefully reviewed and verified by the authors. The mathematical content, arguments, and proofs were developed and verified by the authors.

\section{Preliminary results}\label{sec-pre}

\subsection{Notation and conventions}\label{subsec-notation}

In this paper, we denote by $\mathbb{N}$ the set of non-negative integers. For any real vector space $M$, we write $M^\vee$ for its dual.

All complex spaces are assumed to be \textit{second countable}. A \textit{complex analytic variety}, or simply an \textit{analytic variety}, is a reduced and irreducible complex space. We  introduce the differential operator $d^c$ such that
\(
dd^c = \frac{\sqrt{-1}}{2\pi} \partial \overline{\partial}.
\)
Throughout this paper, we use the notation $dd^c$ instead of $\frac{\sqrt{-1}}{2\pi} \partial \overline{\partial}$.

Unless otherwise stated, an analytic variety $X$ has complex dimension $n$. We follow the standard notation and conventions used in \cite{KM98} concerning the Minimal Model Program. In addition, we say that a normal analytic variety $X$ is a \emph{klt variety} if the pair $(X, 0)$ is klt; equivalently, $X$ is log terminal.

Unless stated otherwise, all sheaves considered in this paper are assumed to be coherent. Let $\mathcal{E}$ be a torsion-free (coherent) sheaf on a normal analytic variety $X$. We define the dual reflexive sheaf $\mathcal{E}^\vee$ by
\(
\mathcal{E}^\vee \coloneqq \Hom(\mathcal{E}, \mathcal{O}_X).
\)
For any positive integer $m \in \mathbb{N}$, we define the reflexive tensor power by
\(
\mathcal{E}^{[\otimes m]} \coloneqq (\mathcal{E}^{\otimes m})^{\vee\vee}.
\)
Given a morphism $f \colon Y \rightarrow X$ between analytic varieties, the reflexive pullback of $\mathcal{E}$ is defined as
\(
f^{[*]}\mathcal{E} \coloneqq (f^*\mathcal{E})^{\vee\vee}.
\)
A reflexive rank-one sheaf $\mathcal{E}$ is called a \textit{$\mathbb{Q}$-line bundle} if there exists an integer $m \in \mathbb{N}$ such that $\mathcal{E}^{[\otimes m]}$ is locally free. We denote the torsion part of a sheaf by $\tor$.

\subsection{Bott--Chern cohomology and the first Chern class}
We first recall the Bott--Chern cohomology group. For forms, psh functions, and currents on analytic varieties, we refer to \cite[Subsubsection 2.2.1]{IJZ25}.

Let \(X\) be a normal analytic variety. Denote by
\(\mathcal C_X^\infty\) (resp. \(\mathcal D'_X, \mathcal H_X\))
the sheaves of smooth functions (resp. distributions, real-valued
pluriharmonic functions) on \(X\).
According to \cite[Definition 4.6.2]{BG13}, a
\((1,1)\)-form (resp. current) with local potentials is defined as a section
of \(H^0(X,\mathcal C_X^\infty/\mathcal H_X)\)
(resp. \(H^0(X,\mathcal D'_X/\mathcal H_X)\)). By definition, any such
\((1,1)\)-form (resp. current) with local potentials can be locally written
as \(\eta=dd^c u\) for some smooth function (resp. distribution) \(u\).
If the current is positive, the local potential may be chosen to be psh.

\begin{defn}[{\cite[Definition 4.6.2]{BG13},  \cite[Definition 3.1]{HP16}}]
In the above setting, the \textit{Bott--Chern cohomology} is defined by
\[
H^{1,1}_{BC}(X) := H^1(X, \mathcal{H}_X).
\]
\end{defn}
Consider the following short exact sequence:
\[
0 \to \mathcal{H}_X \to \mathcal{C}^{\infty}_{X} \to \mathcal{C}^{\infty}_X / \mathcal{H}_X \to 0,
\]
which induces a surjective map
\(
H^0(X, \mathcal{C}^{\infty}_X / \mathcal{H}_X) \twoheadrightarrow H^{1,1}_{BC}(X).
\)
Hence, any closed $(1,1)$-form $\eta$ (resp.~closed $(1,1)$-current $T$) with local potentials defines a class $\{\eta \}$ (resp.~$\{T\}$) in $H^{1,1}_{BC}(X)$. Conversely, every element of $H^{1,1}_{BC}(X)$ can be represented in this way.

For a class $\alpha \in H^{1,1}_{BC}(X)$, positivity notions are defined as in \cite[Definition 2.2]{IJZ25}. Among these notions, we mainly use the following ones. 
Let $\omega$ be a Hermitian form. A class $\alpha$ is called \emph{nef} if it can be represented by a $(1,1)$-form $\eta$ with local potentials such that, for every $\varepsilon > 0$, there exists a smooth function $f_{\varepsilon}$ satisfying
\(
\eta + dd^c f_{\varepsilon} \geq -\varepsilon \omega.
\)
The class $\alpha$ is called \emph{big} if it contains a K\"ahler current $T$.

Let $H^p(X, \R)$ denote the singular cohomology group, and $H_p(X, \R)$ denote the singular homology group.
Let $\mathbb{R}_X$ denote the constant sheaf with values in $\mathbb{R}$. By \cite[Section 1]{Wu21}, $H^p(X, \R)$ is isomorphic to the sheaf cohomology group $H^p(X, \mathbb{R}_X)$. Hence, in what follows, we identify these two groups.

According to \cite[Subsection 3.1]{GK20}, we have the following short exact sequence:
\begin{equation*}
0 \to \mathbb{R}_X \xrightarrow{\sqrt{-1}} \mathscr{O}_X \xrightarrow{\mathrm{Re}} \mathcal{H}_{X} \to 0.
\end{equation*}
This induces the following connecting homomorphism:
\begin{equation}
\label{eq-connect-h2}
\delta^1 \colon H^{1,1}_{BC}(X)=H^1(X, \mathcal{H}_{X}) \to H^2(X, \mathbb{R}_X)=H^2(X, \mathbb{R})
\end{equation}

We recall the definition of the (homological) first Chern class of a torsion-free coherent sheaf.
\begin{defn}\cite[Definition 3 and Lemma 4]{Wu22}
\label{defn-first-chern}
Let $X$ be an $n$-dimensional compact analytic variety that is smooth in codimension one. 
For any torsion-free coherent sheaf $\mathcal{E}$ on $X$, we take a resolution of singularities $\pi : \widetilde{X} \to X$ such that $\pi^*\mathcal{E}/\tor$ is locally free, and define
\[
c_1(\mathcal{E})
:= \pi_{*}
\left( 
c_1(\pi^*\mathcal{E}/\tor) \cap [\widetilde{X}]
\right)
 \in H_{2n-2}(X,\mathbb{R})
\]
Note that $c_1(\pi^*\mathcal{E}/\tor) \cap [\widetilde{X}] \in H_{2n-2}(\widetilde{X},\R)$, and $\pi_{*} : H_{2n-2}(\widetilde{X},\R) \to H_{2n-2}(X,\mathbb{R})$ is the push-forward in homology. It is known that this definition is independent of the choice of $\pi$.
\end{defn}
Using this definition, for any classes $\alpha_1, \ldots, \alpha_{n-1} \in H^{1,1}_{BC}(X)$, we define
$$
c_1(\mathcal{E}) \cdot \alpha_1 \cdots \alpha_{n-1}
:= 
\underbrace{c_1(\mathcal{E})}_{\in H_{2n-2}(X)} \cdot 
\underbrace{\left(\delta^1(\alpha_1) \cdots \delta^1(\alpha_{n-1}) \right)}_{\in  H^{2n-2}(X, \mathbb{R}) \text{ by \eqref{eq-connect-h2}}} 
\in \R
$$

Let $f \colon Y \to X$ be a proper bimeromorphic morphism between normal analytic varieties. By \cite[Lemma 3.3]{HP16}, we can define a pullback map
$f^{*} \colon H^{1,1}_{BC}(X) \hookrightarrow H^{1,1}_{BC}(Y)$.
In this setting, we have
\begin{equation}
\label{eq-pullback-intersection}
c_1(\mathcal{E}) \cdot \alpha_1 \cdots \alpha_{n-1}
=
c_1(f^{[*]}\mathcal{E})\cdot f^{*}\alpha_1 \cdots f^{*}\alpha_{n-1}
\end{equation}
This follows directly from \cite[Lemma 2.2]{Ou25b} and from the fact that the following diagram commutes, even if $X$ and $Y$ do not necessarily have rational singularities:
$$
\xymatrix@C=25pt@R=20pt{
H^{1,1}_{BC}(X) \ar[r]^{f^{*}} \ar[d]_{\delta^1}
& H^{1,1}_{BC}(Y) \ar[d]^{\delta^1} \\
H^2(X, \mathbb{R}) \ar[r]^{f^{*}} & H^2(Y, \mathbb{R}) \\
}
$$

At the end of this subsection, we recall the definition of Fujiki's class.

\begin{defn}(\cite[Definition 2.2]{DH20})
\label{defn-Fujiki}
Let $X$ be a compact normal analytic variety. We say that $X$ belongs to \textit{Fujiki's class} if $X$ is bimeromorphic to a compact K\"ahler manifold.
\end{defn}
If $X$ admits a big class, then $X$ belongs to Fujiki's class. However, it remains unknown whether the converse holds when $X$ is singular (see \cite[Remark 2.3]{DH23}). Thus, this paper mainly treats varieties in Fujiki's class.

\section{Orbifold Chern classes}

In this section, we recall the definition of orbifold Chern classes and some elementary properties, such as the Hodge index theorem and the additivity of the second Chern class under exact sequences.

\subsection{Definition of orbifolds}
For the definition of orbifolds, we follow the convention of \cite[Subsection 3.1]{DO23}. We briefly recall a simplified version of the definition. For the precise definition, we also refer to \cite[Subsection 4.1]{IJZ25}.

As in \cite[Definition 3.1]{DO23}, a \emph{complex orbifold} \( Y \) of dimension \( n \) is a connected, second countable Hausdorff space equipped with an orbifold structure \( Y_{\orb} = \{ (U_i, G_i, \pi_i) \}_{i \in I} \) satisfying the following conditions:
\begin{enumerate}[label=$(\arabic*)$]
    \item Each \( U_i \) is an open subset of \( \mathbb{C}^n \),
    \item \( G_i \subset \mathrm{GL}_n(\mathbb{C}) \) is a finite group acting holomorphically on \( U_i \), and
    \item The map \( \pi_i \colon U_i \to U_i / G_i \) is the quotient map, such that \( U_i / G_i \cong Y_i \subset Y \) for some analytic open set \( Y_i \), and \( Y  =  \bigcup_{i \in I} Y_i\).
\end{enumerate}
These data are required to satisfy certain compatibility conditions as in \cite[Definition 4.1 (2)]{IJZ25}. The orbifold is called \emph{standard} if each \( G_i \) acts freely in codimension one. Throughout this paper, we always assume that the orbifold structure \( Y_{\orb} \) is effective (see \cite[Definition 4.1]{IJZ25}).

We often say that \( Y \) is the quotient space of a complex orbifold \(Y_\orb\). According to \cite[Remark~3.4 (1) and (2)]{DO23}, the quotient space \( Y \) of a complex orbifold \(Y_\orb\) is a normal analytic variety with quotient singularities, and conversely, any complex analytic variety with quotient singularities admits a unique standard orbifold structure.

An \emph{orbi-sheaf} $\mathcal{E}_{\orb}=\{\mathcal{E}_i\}_{i \in I}$ on a complex orbifold $Y_{\orb} = \{(U_i, G_i, \pi_i)\}_{i \in I}$ is a collection of holomorphic $G_i$-linearized sheaves $\mathcal{E}_i$ on $U_i$ satisfying certain compatibility conditions. The orbi-sheaf $\{\mathcal{E}_i\}_{i \in I}$ is said to be \emph{torsion-free} (resp.~\emph{reflexive}, \emph{locally free}) if each $\mathcal{E}_i$ is torsion-free (resp.~reflexive, locally free). As in the smooth case, when an orbi-sheaf \( \mathcal{E}_{\orb} \) is locally free, we identify it with the corresponding vector orbi-bundle \( E_{\orb} \).

Similarly, an \emph{orbifold differential form} $\sigma_{\orb}=\{\sigma_i\}_{i \in I}$ on a complex orbifold $Y_{\orb} = \{(U_i, G_i, \pi_i)\}_{i \in I}$ is a collection $\{\sigma_i\}$ of $G_i$-invariant differential forms on the charts $\{U_i\}$ satisfying certain compatibility conditions.

As in \cite[Subsection 2.2.2]{Kob14}, we can define the orbifold Chern classes of vector orbi-bundles. Let \( E_{\orb} = \{E_i\}_{i \in I} \) be a vector orbi-bundle over a complex orbifold \( Y_{\orb} \), and let \( h_{\orb} \) be a Hermitian metric on \( E_{\orb} \), given by a collection \( \{h_i\}_{i \in I} \) of \( G_i \)-invariant Hermitian metrics on the local bundles \( E_i \), compatible with the orbifold structure. The \emph{orbifold Chern class} \( c_{p}^{\orb}(E_{\orb}) \) is then defined via the \( p \)-th orbifold Chern forms \( \Theta_p := \{ \Theta_p(E_i, h_i) \}_{i \in I} \). By \cite{Sat56}, we have $c_{p}^{\orb}(E_{\orb})\in H^{2p}(Y, \mathbb{R})$.

\subsection{Definition of orbifold Chern classes}
\label{subsec-intersection-orbifold-nonpluripolar}

Let $X$ be a compact normal analytic variety with quotient singularities in codimension $2$.
We recall the orbifold Chern classes using orbifold modifications introduced in \cite[Section 2]{Ou25b}.

Let $\mathcal{E}$ be a reflexive sheaf. Since $X$ has at most quotient singularities in codimension $2$, by \cite[Theorem 1.2]{Ou24} and \cite[Theorem 1]{KO25}, there exists a bimeromorphic map $f: Y \to X$, where $Y$ has only quotient singularities and admits a standard orbifold structure $Y_{\orb} = \{ (V_i, G_i, \rho_i) \}_{i \in I}$. Define $\mathcal{F} := f^*\mathcal{E} / \Tor$ and set $\mathcal{F}_i := \rho_i^{*} \mathcal{F} / \tor$. Then the sheaf $\mathcal{F}$ defines a torsion-free orbi-sheaf $\mathcal{F}_{\orb} := \{ \mathcal{F}_i \}_{i \in I}$ on $Y_{\orb}$.

According to \cite[Theorem 3.10]{DO23}, there exists a functorial resolution $p_i: U_i \to V_i$ such that each $U_i$ is smooth and the sheaf $\mathcal{H}_i := p_i^* \mathcal{F}_i / \Tor$ is locally free. Moreover, by functoriality, each $U_i$ admits a $G_i$-action, which gives rise to a not necessarily standard orbifold structure $Z_{\orb} := \{ (U_i, G_i, \pi_i) \}_{i \in I}$. Let $Z$ be the quotient space associated with $Z_{\orb}$, and let $p: Z \to Y$ be the morphism induced by the collection $\{ p_i \}_{i \in I}$. Set $q := f \circ p: Z \to X$. Since $\mathcal{H}_{\orb} := \{ \mathcal{H}_i \}_{i \in I}$ is locally free, it defines an orbi-bundle.
Thus we obtain
$
c_2^{\orb}(\mathcal{H}_{\orb}) \in H^4(Z, \mathbb{R}).
$
\begin{defn}
For any classes $\alpha_1, \ldots, \alpha_{n-2} \in H^{1,1}_{BC}(X)$, we define the intersection number
$$
\widehat{c}_2(\mathcal{E}) \cdot  \alpha_1 \cdots \alpha_{n-2}  
:= \underbrace{c_2^{\orb}(\mathcal{H}_{\orb}) }_{\in H^4(Z, \mathbb{R})}\cdot 
\underbrace{\delta^1(q^* \alpha_1) \cdots \delta^1(q^* \alpha_{n-2})}_{ \in H^{2n-4}(Z, \mathbb{R})}
\in \R
$$
Here we use the morphism $\delta^1 : H^{1,1}_{\mathrm{BC}}(Z) \to H^2(Z, \R)$ as in \eqref{eq-connect-h2}, and the above product is the cup product in singular cohomology.
\end{defn}
By \cite[Lemma 2.2]{Ou25b}, the above intersection number is independent of the choice of the orbifold modification $q : Z \to X$.
In the same way, one can define
$$
\widehat{c}_1(\mathcal{E})^2 \cdot  \alpha_1 \cdots \alpha_{n-2} 
\quad \text{and} \quad \widehat{c}_1(\mathcal{E})\cdot \widehat{c}_1(\mathcal{E}') \cdot \alpha_1 \cdots \alpha_{n-2}
$$
for any reflexive sheaves $\mathcal{E}$ and $\mathcal{E}'$ on $X$. The same construction also defines $\widehat{c}_1(\mathcal{E}) \cdot  \alpha_1 \cdots \alpha_{n-1}$. Since $\mathcal{E}$ is reflexive, this coincides with $c_1(\mathcal{E}) \cdot  \alpha_1 \cdots \alpha_{n-1}$ in Definition~\ref{defn-first-chern}.
We introduce the following notation:
\begin{defn}
\label{defn-Bogomolov-discriminant}
Let $X$ be a compact normal analytic variety with quotient singularities in codimension $2$. We define
\[
\widehat{c}_2(X) := \widehat{c}_2(\Omega_X^{[1]})
\quad \text{and} \quad
\widehat{c}_1(X)^2 := \widehat{c}_1(\Omega_X^{[1]})^2.
\]
For a rank $r$ reflexive sheaf $\mathcal{E}$ on $X$, the \emph{Bogomolov discriminant} is defined by
\[
\widehat{\Delta}(\mathcal{E}) := 2r \widehat{c}_2(\mathcal{E}) - (r-1) \widehat{c}_1(\mathcal{E})^2.
\]
\end{defn}

\begin{rem}
According to \cite[Lemma 5.8]{GK20}, any klt variety has quotient singularities in codimension two, thus we can define the orbifold second Chern class.
\end{rem}

\subsection{Hodge index theorem}
We begin by recalling the following Hodge index type theorem.

\begin{lem}$($\cite[Lemmas 3.6–3.8, 3.10]{Zh24}$)$
\label{lem-zhang-hodge-index}
Let $X$ be an $n$-dimensional compact K\"ahler manifold, and let $\alpha_1, \ldots, \alpha_{n-2} \in H^{1,1}(X,\mathbb{R})$ be nef and big classes. Set $\Omega := \alpha_1 \cdots \alpha_{n-2}$. Suppose $\alpha \in H^{1,1}(X,\mathbb{R})$ is a nef class satisfying $\alpha^2 \cdot \Omega > 0$.

For any $\beta, \gamma \in H^{1,1}(X,\mathbb{C})$, define
$$
\mathrm{Q}_{\Omega}(\beta, \gamma) := \beta \cdot \bar{\gamma} \cdot \Omega,
\quad \text{and} \quad
\mathrm{P}_{\alpha \cdot \Omega} := \{ \gamma \in H^{1,1}(X,\mathbb{C}) \mid \gamma \cdot (\alpha \cdot \Omega) = 0 \}.
$$
Then the following hold:
\begin{enumerate}[label=$(\arabic*)$]
    \item For any $\beta \in H^{1,1}(X,\mathbb{C})$, there exist $a \in \mathbb{C}$ and $\beta_0 \in \mathrm{P}_{\alpha \cdot \Omega}$ such that $\beta = a \cdot \alpha + \beta_0$.
    \item $\mathrm{Q}_{\Omega}$ is semi-negative definite on $\mathrm{P}_{\alpha \cdot \Omega}$.
    \item For any $\gamma \in \mathrm{P}_{\alpha \cdot \Omega}$, $\mathrm{Q}_{\Omega}(\gamma,\gamma) = 0$ if and only if $\gamma \cdot \Omega = 0$.
\end{enumerate}
\end{lem}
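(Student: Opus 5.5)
The plan is to deduce all three statements from the Hodge--Riemann bilinear relations for mixed polarizations, moving from Kähler classes to nef and big classes by approximation.

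The starting point is the Kähler case. Suppose $\alpha_1,\dots,\alpha_{n-2}$ are Kähler classes. By the mixed Hard Lefschetz theorem and the mixed Hodge--Riemann relations of Dinh--Nguyên (see also Cattani and Gromov), the Hermitian form $\mathrm{Q}_\Omega$ on $H^{1,1}(X,\mathbb{C})$ has signature $(1, h^{1,1}-1)$. Moreover, it is negative definite on the primitive subspace $\{\gamma : \gamma\cdot\omega\cdot\Omega=0\}$ for any Kähler class $\omega$.

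For (1), we have $\alpha\cdot\alpha\cdot\Omega>0$. Setting $a := (\beta\cdot\alpha\cdot\Omega)/(\alpha^2\cdot\Omega)$ and $\beta_0:=\beta-a\alpha$ gives $\beta_0\cdot\alpha\cdot\Omega=0$. This step is purely linear algebra and needs no positivity beyond the hypothesis $\alpha^2\cdot\Omega>0$.

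For (2), I would argue by approximation. Write $\alpha_i^\varepsilon := \alpha_i+\varepsilon\omega$ and $\alpha^\varepsilon:=\alpha+\varepsilon\omega$ with $\omega$ Kähler, and set $\Omega_\varepsilon := \prod_i \alpha_i^\varepsilon$. Since $\mathrm{Q}_{\Omega_\varepsilon}$ has signature $(1,*)$ and $\mathrm{Q}_{\Omega_\varepsilon}(\alpha^\varepsilon,\alpha^\varepsilon)>0$, the form is negative definite on the $\mathrm{Q}_{\Omega_\varepsilon}$-orthogonal complement $\mathrm{P}_{\alpha^\varepsilon\cdot\Omega_\varepsilon}$. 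Given $\gamma\in \mathrm{P}_{\alpha\cdot\Omega}$, I project it: $\gamma_\varepsilon := \gamma - \frac{\gamma\cdot\alpha^\varepsilon\cdot\Omega_\varepsilon}{(\alpha^\varepsilon)^2\cdot\Omega_\varepsilon}\alpha^\varepsilon$. Then $\gamma_\varepsilon\to\gamma$, because the numerator tends to $0$ while the denominator tends to $\alpha^2\cdot\Omega>0$. Hence $\mathrm{Q}_\Omega(\gamma,\gamma)=\lim \mathrm{Q}_{\Omega_\varepsilon}(\gamma_\varepsilon,\gamma_\varepsilon)\le 0$.

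The main obstacle is (3), the equality case, since negativity is lost in the limit. The direction "$\gamma\cdot\Omega=0\Rightarrow\mathrm{Q}_\Omega(\gamma,\gamma)=0$" is trivial. For the converse, suppose $\gamma \in \mathrm{P}_{\alpha\cdot\Omega}$ and $\mathrm{Q}_\Omega(\gamma,\gamma)=0$. By (2), $\gamma$ lies in the kernel of $\mathrm{Q}_\Omega$ restricted to $\mathrm{P}_{\alpha\cdot\Omega}$, by the Cauchy--Schwarz argument for semidefinite forms. Since we also know $\mathrm{Q}_\Omega(\gamma,\alpha)=0$, decomposition (1) gives $\gamma\cdot\bar\beta\cdot\Omega=0$ for all $\beta$, so $\gamma\cdot\Omega$ is numerically trivial against $H^{1,1}$. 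It remains to conclude that $\gamma\cdot\Omega=0$ in $H^{n-1,n-1}$, i.e.\ to pair against all of $H^{n-1,n-1}$ via Poincaré duality. This holds because $H^{1,1}$ is dual to $H^{n-1,n-1}$ and $\gamma\cdot\Omega$ is of type $(n-1,n-1)$, so Serre duality gives vanishing.

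Note that this route does not use bigness of the $\alpha_i$ beyond ensuring $\alpha^2\cdot\Omega>0$ is meaningful. The genuinely delicate inputs in Zhang's lemmas are the signature statement for Kähler mixed polarizations and the continuity argument, which I would cite rather than reprove.
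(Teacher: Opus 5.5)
Your proposal is correct and complete. The paper gives no proof of this lemma of its own; it is imported verbatim from \cite{Zh24}. Your argument is therefore a self-contained replacement, resting only on the mixed Hodge--Riemann relations for K\"ahler classes.

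All three steps check out. In (2), $\gamma_\varepsilon\in \mathrm{P}_{\alpha^\varepsilon\cdot\Omega_\varepsilon}$ by construction. Moreover $\gamma_\varepsilon\to\gamma$, since $\gamma\cdot\alpha^\varepsilon\cdot\Omega_\varepsilon\to 0$ while $(\alpha^\varepsilon)^2\cdot\Omega_\varepsilon\to\alpha^2\cdot\Omega>0$. Joint continuity of intersection numbers then gives $\mathrm{Q}_\Omega(\gamma,\gamma)\le 0$. In (3), three ingredients put $\gamma$ in the radical of $\mathrm{Q}_\Omega$ on all of $H^{1,1}(X,\mathbb{C})$: Cauchy--Schwarz for the semidefinite form on $\mathrm{P}_{\alpha\cdot\Omega}$, the identity $\mathrm{Q}_\Omega(\gamma,\alpha)=\gamma\cdot\alpha\cdot\Omega=0$ (using that $\alpha$ is real), and the decomposition from (1).

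There is one wording slip at the end. You pair $\gamma\cdot\Omega\in H^{n-1,n-1}$ against $H^{1,1}$, not against $H^{n-1,n-1}$. The perfect pairing $H^{n-1,n-1}\times H^{1,1}\to\mathbb{C}$ then gives $\gamma\cdot\Omega=0$. Since pairings with $H^{2,0}\oplus H^{0,2}$ vanish for type reasons, the class is also zero in $H^{2n-2}(X,\mathbb{C})$.

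Your route makes transparent that bigness of $\alpha_1,\dots,\alpha_{n-2}$ is never used once $\alpha^2\cdot\Omega>0$ is among the hypotheses. Your closing remark understates this: the lemma holds verbatim for arbitrary nef $\alpha_i$. In the paper, bigness matters only downstream, for instance in Lemma~\ref{lem-orbifold-Hodge-index}(2). There it guarantees $\alpha^2\cdot\Omega>0$ for the auxiliary nef and big class $\alpha$, so that the decomposition of this lemma can be invoked.
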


We now state the following generalization of the Hodge index theorem for analytic varieties:

\begin{lem}
\label{lem-orbifold-Hodge-index}
Let $X$ be an $n$-dimensional compact normal analytic variety in Fujiki's class. Let $\beta \in H^{1,1}_{BC}(X)$ be a nef class, and let $\alpha_1, \ldots, \alpha_{n-2} \in H^{1,1}_{BC}(X)$ be nef and big classes. Let $\gamma \in H^{1,1}_{BC}(X)$ and set $\Omega := \alpha_1 \cdots \alpha_{n-2}$.
\begin{enumerate}[label=$(\arabic*)$]
    \item If $\beta^2 \cdot \Omega > 0$, then the following inequality holds:
    \begin{equation}
    \label{equa-hodge-1}
    (\beta^2 \cdot \Omega) \cdot (\gamma^2 \cdot \Omega) \leq (\beta \cdot \gamma \cdot \Omega)^2.
    \end{equation}
    
    \item If $\beta^2 \cdot \Omega = 0$, $\beta \cdot \gamma \cdot \Omega = 0$, and there exists a nef and big class $\alpha \in H^{1,1}_{BC}(X)$ such that $\beta \cdot \alpha \cdot \Omega > 0$, then we have:
    \begin{equation}
    \label{equa-hodge-2}
    \gamma^2 \cdot \Omega \leq 0.
    \end{equation}
    
    \item In either case $(1)$ or $(2)$, if equality holds in \eqref{equa-hodge-1} or \eqref{equa-hodge-2}, then there exists a real number $\lambda$ such that
    $
    (\gamma - \lambda \beta) \cdot \delta \cdot \Omega = 0
    $
    for every $\delta \in H^{1,1}_{BC}(X)$.
\end{enumerate}
\end{lem}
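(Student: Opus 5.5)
The plan is to reduce to Lemma~\ref{lem-zhang-hodge-index} on a compact Kähler manifold. Since $X$ is in Fujiki's class, there is a resolution $\pi\colon \widetilde X\to X$ with $\widetilde X$ a compact Kähler manifold: take any bimeromorphic Kähler model and resolve the graph, which stays Kähler after blow-ups. Pull all classes back by $\pi^*\colon H^{1,1}_{BC}(X)\hookrightarrow H^{1,1}_{BC}(\widetilde X)=H^{1,1}(\widetilde X,\R)$. The needed facts about this pullback are:
\begin{enumerate}[label=$(\alph*)$]
\item Pullbacks of nef classes are nef.
\item Pullbacks of big classes are big, since a Kähler current pulls back to a current $\ge \pi^*\omega$, which is big on $\widetilde X$.
\item Intersection numbers are preserved. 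The commutative diagram with $\delta^1$ shows that products of $\delta^1$-images pull back to $H^{2n}(\widetilde X,\R)$, and $\pi_*[\widetilde X]=[X]$ because $\pi$ has degree one. So every number $\beta\cdot\gamma\cdot\Omega$ computed on $X$ equals the corresponding number of pulled-back classes on $\widetilde X$.
\end{enumerate}
Set $\widetilde\Omega:=\pi^*\alpha_1\cdots\pi^*\alpha_{n-2}$, which satisfies the hypotheses of Lemma~\ref{lem-zhang-hodge-index}.

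For $(1)$, apply Lemma~\ref{lem-zhang-hodge-index} with $\alpha=\pi^*\beta$, which is nef and satisfies $\alpha^2\cdot\widetilde\Omega>0$. Write $\pi^*\gamma=a\,\pi^*\beta+\gamma_0$ with $\gamma_0\in \mathrm P_{\alpha\cdot\widetilde\Omega}$. Because $\gamma$ is real and $\beta^2\cdot\Omega\neq 0$, we get $a=(\beta\cdot\gamma\cdot\Omega)/(\beta^2\cdot\Omega)\in\R$, so $\gamma_0$ is real. Expanding gives
\[
\gamma^2\Omega=a^2\beta^2\Omega+\gamma_0^2\widetilde\Omega\le a^2\beta^2\Omega=\frac{(\beta\gamma\Omega)^2}{\beta^2\Omega},
\]
where the inequality is item (2) of Lemma~\ref{lem-zhang-hodge-index}; this is \eqref{equa-hodge-1}. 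If equality holds, then $\gamma_0^2\cdot\widetilde\Omega=0$, so item (3) gives $\gamma_0\cdot\widetilde\Omega=0$. Then $(\gamma-a\beta)\cdot\delta\cdot\Omega=\gamma_0\cdot\pi^*\delta\cdot\widetilde\Omega=0$ for all $\delta$, and we take $\lambda=a$.

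For $(2)$, the main obstacle is that $\beta^2\Omega=0$, so $\beta$ cannot serve as the class $\alpha$ in Lemma~\ref{lem-zhang-hodge-index}. I would perturb it to $\beta_t:=\beta+t\alpha$ for $t>0$. This class is nef and big, and
\[
\beta_t^2\Omega=2t\,\beta\alpha\Omega+t^2\alpha^2\Omega>0.
\]
Applying $(1)$ gives
\[
(2t\,\beta\alpha\Omega+t^2\alpha^2\Omega)\,\gamma^2\Omega\le (t\,\alpha\gamma\Omega)^2,
\]
using $\beta\gamma\Omega=0$. Divide by $t$ and let $t\to0$ to obtain $2(\beta\alpha\Omega)\,\gamma^2\Omega\le 0$, hence $\gamma^2\Omega\le0$ since $\beta\alpha\Omega>0$.

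For equality in $(2)$, apply Lemma~\ref{lem-zhang-hodge-index} with the nef and big class $\pi^*\alpha$. Write $\pi^*\beta=b\,\pi^*\alpha+\beta_0$ and $\pi^*\gamma=c\,\pi^*\alpha+\gamma_0$ with $\beta_0,\gamma_0\in\mathrm P$, where $b,c$ are real. The form $\mathrm Q_{\widetilde\Omega}$ is semi-negative on $\mathrm P$ and its kernel there consists of those classes with $\cdot\widetilde\Omega=0$. Now consider the span of $\pi^*\beta$ and $\pi^*\gamma$. The equalities $\beta^2\Omega=\beta\gamma\Omega=\gamma^2\Omega=0$ say that $Q$ vanishes identically on this span. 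On the other hand, $Q$ has signature $(1,\text{semi-negative})$, and $b\neq0$ because $\beta\alpha\Omega>0$. Choose $\lambda$ so that $\pi^*(\gamma-\lambda\beta)\in\mathrm P$, namely $\lambda=c/b$. Then $Q(\gamma-\lambda\beta,\gamma-\lambda\beta)=0$, and item (3) of Lemma~\ref{lem-zhang-hodge-index} yields $(\gamma-\lambda\beta)\cdot\widetilde\Omega=0$. This gives the claim in $(3)$.
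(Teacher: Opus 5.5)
Your proposal is correct. The reduction to a compact K\"ahler resolution, part $(1)$, and the equality case of $(2)$ follow the paper's proof essentially verbatim.

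The one genuine difference is how you obtain the inequality \eqref{equa-hodge-2}. You perturb $\beta$ to $\beta+t\alpha$, apply $(1)$, divide by $t$ and let $t\to 0$. The paper instead decomposes with respect to $\alpha$ immediately: it takes $\lambda=c/b$ so that $\gamma-\lambda\beta\in\mathrm{P}_{\alpha\cdot\Omega}$. It then notes that the hypotheses $\beta^2\cdot\Omega=\beta\cdot\gamma\cdot\Omega=0$ give the identity $\gamma^2\cdot\Omega=\mathrm{Q}_{\Omega}(\gamma-\lambda\beta,\gamma-\lambda\beta)$ unconditionally, not only under equality. Item $(2)$ of Lemma~\ref{lem-zhang-hodge-index} then gives the inequality, and item $(3)$ gives the equality characterization, in one step.

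Since you set up exactly this decomposition for the equality case anyway, your limiting argument is redundant. Its only advantage is that it derives the degenerate inequality from the nondegenerate case $(1)$ using nothing beyond nefness of $\beta+t\alpha$.

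One small point: applying Lemma~\ref{lem-zhang-hodge-index} with $\pi^*\alpha$ requires $(\pi^*\alpha)^2\cdot\widetilde{\Omega}>0$, which you leave implicit, as does the paper. It holds because $\alpha,\alpha_1,\dots,\alpha_{n-2}$ are all nef and big, for example by the Khovanskii--Teissier inequalities.
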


\begin{proof}
By taking a resolution, it suffices to prove the result under the assumption that $X$ is a compact K\"ahler manifold.
We use the notation from Lemma~\ref{lem-zhang-hodge-index}.

(1) By Lemma~\ref{lem-zhang-hodge-index} (1), we can write $\gamma = a \cdot \beta + \gamma_0$ for some $\gamma_0 \in \mathrm{P}_{\beta \cdot \Omega}$, where $a = \frac{\mathrm{Q}_{\Omega}(\gamma,\beta)}{\mathrm{Q}_{\Omega}(\beta,\beta)} \in \mathbb{R}$. Then by Lemma~\ref{lem-zhang-hodge-index} (2),
$$
\mathrm{Q}_{\Omega}
\left(\gamma - a \cdot \beta, \gamma - a \cdot \beta\right) \leq 0,
$$
which is equivalent to \eqref{equa-hodge-1}. Moreover, equality holds if and only if $(\gamma - a \cdot \beta) \cdot \Omega = 0$ by Lemma~\ref{lem-zhang-hodge-index} (3).

(2) Write $\beta = b \cdot \alpha + \beta_0$ and $\gamma = c \cdot \alpha + \gamma_0$ for some $b, c \in \mathbb{R}$ and $\beta_0, \gamma_0 \in \mathrm{P}_{\alpha \cdot \Omega}$. Since $\beta \cdot \alpha \cdot \Omega > 0$, we have $b > 0$, and hence $\gamma - \frac{c}{b} \beta \in \mathrm{P}_{\alpha \cdot \Omega}$. Therefore,
$$
\gamma^2 \cdot \Omega = \mathrm{Q}_{\Omega} \left( \gamma - \frac{c}{b} \beta, \gamma - \frac{c}{b} \beta \right) \leq 0.
$$
Equality holds if and only if $(\gamma - \frac{c}{b} \beta) \cdot \Omega = 0$. This completes the proof.
\end{proof}

\begin{rem}
\label{rem-Hodge-index-num}
Under the same assumptions as in Lemma~\ref{lem-orbifold-Hodge-index}, the condition ``$\beta \cdot \gamma \cdot \Omega = 0$ for all nef and big classes $\beta$'' is equivalent to ``$\gamma \cdot \Omega \equiv 0$'', that is,
$$
\delta \cdot \gamma \cdot \Omega = 0
\quad \text{for all } \delta \in H^{1,1}_{BC}(X).
$$
\end{rem}

Using Lemma~\ref{lem-orbifold-Hodge-index}, we obtain the following Hodge index theorem for orbifold Chern classes.

\begin{prop}
\label{prop-Hodge-index}
Let $X$ be a compact normal analytic variety in Fujiki's class with quotient singularities in codimension $2$. Let $\beta \in H^{1,1}_{BC}(X)$ be a nef class, and let $\alpha_1, \dots, \alpha_{n-2} \in H^{1,1}_{BC}(X)$ be nef and big classes. Set $\Omega := \alpha_1 \cdots \alpha_{n-2}$.

For any reflexive sheaf $\mathcal{E}$ on $X$, the following statements hold:
\begin{enumerate}[label=$(\arabic*)$]
    \item If $\beta^2 \cdot \Omega > 0$, then
    \begin{equation}
    \label{eq-hodge-index-ob-1}
    (\widehat{c}_1(\mathcal{E})^2 \cdot \Omega) \cdot
        (\beta^2 \cdot \Omega)\leq (\widehat{c}_1(\mathcal{E}) \cdot \beta \cdot \Omega)^2.
    \end{equation}
    \item If $\beta^2 \cdot \Omega = 0$, $ \widehat{c}_1(\mathcal{E})  \cdot\beta\cdot \Omega = 0$, and there exists a nef and big class $\alpha \in H^{1,1}_{BC}(X)$ such that $ \beta \cdot \alpha \cdot\Omega > 0$, then
    \begin{equation}
    \label{eq-hodge-index-ob-2}
        \widehat{c}_1(\mathcal{E})^2 \cdot \Omega \leq 0.
    \end{equation}
    \item In either case $(1)$ or $(2)$, if equality holds in \eqref{eq-hodge-index-ob-1} or \eqref{eq-hodge-index-ob-2}, then there exists a real number $\lambda$ such that
    \(
    \widehat{c}_1(\mathcal{E}) \cdot \delta \cdot \Omega
    = \lambda \beta \cdot \delta \cdot \Omega
    \)
    for every $\delta \in H^{1,1}_{BC}(X)$.
\end{enumerate}
\end{prop}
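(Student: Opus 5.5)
The plan is to reduce Proposition~\ref{prop-Hodge-index} to Lemma~\ref{lem-orbifold-Hodge-index}, with \(\gamma\) replaced by a Bott--Chern class that computes \(\widehat{c}_1(\mathcal{E})\) on a suitable model. I choose an orbifold modification \(q\colon Z\to X\) as in Subsection~\ref{subsec-intersection-orbifold-nonpluripolar}, built from a bimeromorphic \(f\colon Y\to X\) with \(Y\) having quotient singularities. On \(Z\) the orbi-sheaf \(\mathcal{H}_{\orb}\) is locally free. Its determinant \(\det\mathcal{H}_{\orb}\) is an orbi-line bundle, and some tensor power \(L:=(\det \mathcal{H}_{\orb})^{\otimes m}\) descends to an honest line bundle on the quotient space \(Z\): one kills the finite stabilizer characters by taking \(m\) divisible by the orders of all \(G_i\).

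The key identification is
\[
c_1^{\orb}(\mathcal{H}_{\orb})=\tfrac1m\,\delta^1(\gamma),\qquad \gamma:=\tfrac1m\{c_1(L,h)\}\in H^{1,1}_{BC}(Z).
\]
Here \(c_1(L,h)\) is the Chern form of a smooth Hermitian metric on \(L\), which is a closed \((1,1)\)-form with local potentials; this is compatible with \(\delta^1\) by the usual exponential-sequence argument. Consequently every intersection number in the statement becomes a Bott--Chern intersection number on \(Z\):
\[
\widehat{c}_1(\mathcal{E})^2\cdot\Omega=\gamma^2\cdot q^*\Omega,\qquad \widehat{c}_1(\mathcal{E})\cdot\beta\cdot\Omega=\gamma\cdot q^*\beta\cdot q^*\Omega,
\]
and similarly with \(\delta\) in place of \(\beta\). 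The pullbacks \(q^*\beta\), \(q^*\alpha_i\), \(q^*\alpha\) remain nef, resp.\ nef and big, and all intersection numbers of pulled-back classes are preserved, since \(q\) is bimeromorphic and, as in \eqref{eq-pullback-intersection}, cup products commute with \(q^*\).

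Since \(X\) is in Fujiki's class, so is \(Z\) (it is a compact normal variety bimeromorphic to \(X\)). Lemma~\ref{lem-orbifold-Hodge-index} therefore applies on \(Z\) with \(\beta\rightsquigarrow q^*\beta\), \(\gamma\), \(\Omega\rightsquigarrow q^*\Omega\), and \(\alpha\rightsquigarrow q^*\alpha\), and it gives (1) and (2) directly.

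For (3), the lemma yields \(\lambda\) with \((\gamma-\lambda q^*\beta)\cdot\delta'\cdot q^*\Omega=0\) for all \(\delta'\in H^{1,1}_{BC}(Z)\). Taking \(\delta'=q^*\delta\) gives the claim on \(X\).

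The main obstacle is justifying that \(\widehat{c}_1\) is computed by a genuine Bott--Chern class on \(Z\), given that \(Z\) has quotient singularities and is not smooth. If the descent argument is delicate, I would instead pass to a resolution \(r\colon W\to Z\) and use \(r^*L\). In that case I need \(r^*\) to be compatible with \(\delta^1\) and with cup products; this is the commutative square stated before Definition~\ref{defn-Fujiki}. Either way the point is that, on the model \(Z\), \(c_1^{\orb}\) is rationally the first Chern class of an ordinary line bundle, so the numerical Hodge index theorem applies to it exactly as to any \((1,1)\)-class.
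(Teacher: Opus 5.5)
Your proposal is correct and follows the paper's proof: pass to the orbifold modification $q\colon Z\to X$, then apply Lemma~\ref{lem-orbifold-Hodge-index} on $Z$ to the class computing $c_1^{\orb}$, using $q^*\beta$, $q^*\Omega$ and $q^*\alpha$, and taking $\delta'=q^*\delta$ for part (3). Your descent of $(\det\mathcal{H}_{\orb})^{\otimes m}$ to a line bundle on $Z$ makes explicit a step the paper leaves implicit, namely that $c_1^{\orb}$ lies in $\delta^1(H^{1,1}_{BC}(Z))$; just fix the doubled factor $\tfrac1m$, which should read $c_1^{\orb}(\mathcal{H}_{\orb})=\delta^1(\gamma)$ with $\gamma=\tfrac1m\{c_1(L,h)\}$.
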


\begin{proof}
As in Subsection~\ref{subsec-intersection-orbifold-nonpluripolar}, let $q \colon Z \to X$ be a bimeromorphic morphism such that $Z$ admits an orbifold structure $Z_{\orb}$, and let $E_{\orb}$ be an orbi-bundle on $Z_{\orb}$ satisfying
\[
\widehat{c}_1(\mathcal{E})^2 \cdot \polar = c_1^{\orb}(E_{\orb})^2 \cdot q^*\polar 
\quad \text{and} \quad
\widehat{c}_1(\mathcal{E}) \cdot \beta \cdot \polar = c_1^{\orb}(E_{\orb}) \cdot q^*\beta \cdot q^*\polar.
\]
Therefore, the result follows by applying Lemma~\ref{lem-orbifold-Hodge-index} to $\gamma := c_1^{\orb}(E_{\orb})$ on $Z$.
\end{proof}

\subsection{Exact sequence}
In this subsection, we study \cite[Lemma 2.3]{Kaw92} and \cite[Lemma~2.13]{IMM24} in the context of analytic varieties, examining the behavior of the second Chern class under an exact sequence. 
To this end, we need the following orbifold version of the Grothendieck--Riemann--Roch formula established in \cite{ma2025superconnection}.

\begin{thm}\cite[Theorem~1.1, Theorem~1.2, Definition~8.1]{ma2025superconnection}
\label{thm-coherent-chernclass}
Let \( X_\orb \) be a compact complex orbifold. Then there exists a unique group homomorphism
\[
\mathrm{ch}_{BC}: K(X_\orb) \rightarrow H_{BC}^{*}(IX_\orb,\C)
\]
from the Grothendieck group of coherent orbi-sheaves on \( X_\orb \) to the Bott--Chern cohomology of the inertia orbifold \( IX_\orb \) of $X_\orb$, satisfying the following properties:
\begin{enumerate}[label=$(\arabic*)$]
    \item For any orbi-bundle \( E_\orb \), the class \( \mathrm{ch}_{BC}(E_\orb) \) coincides with the definition given in \cite[Section~1.2]{ma2005orbifolds}.
    \item The map \( \mathrm{ch}_{BC} \) is functorial with respect to pullbacks of orbifolds.
    \item The map \( \mathrm{ch}_{BC} \) satisfies the Grothendieck--Riemann--Roch formula for orbifold embeddings: for any embedding \( i_\orb: Z_\orb \hookrightarrow Y_\orb \) of compact complex orbifolds, and any coherent orbi-sheaf \( \mathcal{F}_\orb \) on \( Z_\orb \), we have
    \[
    \mathrm{ch}_{BC}((i_\orb)_*\mathcal{F}_\orb) = (i_\orb)_* \left( \frac{\mathrm{ch}_{BC}(\mathcal{F}_\orb)}{\mathrm{Td}_{BC}(N_{Z_\orb/Y_\orb})} \right) \quad \text{in } H_{BC}^*(IY_{\orb},\C),
    \]
    where \( \mathrm{Td}_{BC} \) denotes the Bott--Chern Todd class.
\end{enumerate}
\end{thm}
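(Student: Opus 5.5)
Since this statement is quoted from \cite{ma2025superconnection}, I would reconstruct its proof along the lines of the Bismut--Shen--Wei construction of Chern characters of coherent sheaves, adapted to the orbifold setting. The plan has two halves: an explicit construction of $\mathrm{ch}_{BC}$ by superconnections, and a uniqueness argument by dévissage.

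\textbf{Construction.} A coherent sheaf on a compact complex manifold need not have a global resolution by vector bundles, so I would not try to resolve $\mathcal{F}_\orb$ directly. Instead:
\begin{enumerate}[label=(\roman*)]
\item Following Block, replace $\mathcal{F}_\orb$ by an \emph{antiholomorphic flat superconnection}. This is a $\mathbb{Z}$-graded $C^\infty$ orbi-bundle $E_\orb$ with a flat $\bar\partial$-superconnection $A''$ whose cohomology sheaf is quasi-isomorphic to $\mathcal{F}_\orb$.
\item Do this locally on each chart $U_i$, $G_i$-equivariantly, using finite-group averaging of a local resolution of the $G_i$-linearized sheaf $\mathcal{F}_i$.
\item Glue the local data by the equivalence of categories between coherent sheaves and such superconnections, which holds orbifold-wise because the $G_i$ are finite.
\item Choose a $G_i$-invariant metric $h$, giving an adjoint $A'$ and a superconnection $A = A'' + A'$.
\item Define $\mathrm{ch}_{BC}(\mathcal{F}_\orb)$ on each stratum of the inertia orbifold $IX_\orb$ (pairs $(x,g)$ with $g \in G_x$) by the equivariant supertrace $\mathrm{Tr}_s[g\exp(-A^2)]$ restricted to the fixed locus $U_i^g$.
\end{enumerate}
This is the Kawasaki/Ma localization of equivariant Chern--Weil theory. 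I would then check the following.
\begin{itemize}
\item The form is $\partial$- and $\bar\partial$-closed.
\item Its Bott--Chern class is independent of $h$ and of the choice of superconnection. This needs a transgression formula of the form $\bar\partial\partial$ of a secondary form.
\item Additivity in short exact sequences follows from the cone construction, so the map descends to $K(X_\orb)$.
\item Property (1) is immediate, since for an orbi-bundle one may take $A'' = \bar\partial$.
\item Property (2) follows from functoriality of the pullback of superconnections.
\end{itemize}

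\textbf{Uniqueness.} I would argue by induction on $\dim \supp \mathcal{F}_\orb$.
\begin{itemize}
\item Torsion-free orbi-sheaves: by an equivariant functorial flattening (as in \cite[Theorem 3.10]{DO23}), pull back along an orbifold modification where the sheaf becomes locally free modulo torsion.
\item The discrepancy is supported on exceptional loci. Handle it by GRR for embeddings (property (3)) together with functoriality, reducing to sheaves pushed forward from lower-dimensional suborbifolds.
\item Injectivity of pullback on Bott--Chern cohomology under modifications then pins down the value. This is Grivaux's argument in the manifold case.
\end{itemize}

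\textbf{Main obstacle.} I expect the hardest step to be property (3), GRR for orbifold embeddings in Bott--Chern cohomology. The plan there:
\begin{itemize}
\item Use deformation to the normal cone, in orbifold form.
\item Combine it with a Bismut--Lebeau type analytic localization, which produces the $\mathrm{Td}_{BC}(N)^{-1}$ factor at the level of forms.
\item Handle the inertia twisting carefully: for $g \in G_x$, the normal bundle of $U^g \cap Z$ splits into $g$-eigenbundles, and the non-trivial eigenvalues should contribute the equivariant correction terms.
\end{itemize}
As a first attempt I would reduce to the Koszul resolution of $i_*\mathcal{O}$, which exists locally and equivariantly, and compute the superconnection Chern character of a Koszul complex explicitly, as in the classical Bismut computation.
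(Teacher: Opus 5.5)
The paper does not prove this statement; it quotes it from \cite{ma2025superconnection}. Your outline follows the route of that source, namely the orbifold version of the Bismut--Shen--Wei theory: Block's antiholomorphic flat superconnections built $G_i$-equivariantly, the twisted supertrace $\mathrm{Tr}_s[g\exp(-A^2)]$ on fixed loci to land in $H_{BC}^{*}(IX_\orb,\C)$, a Koszul/deformation-to-the-normal-cone computation for the embedding formula, and a Grivaux-type d\'evissage for uniqueness. The one point to phrase more carefully is that an orbifold modification does not induce a modification on each twisted sector: blowing up a $G$-invariant centre $C$ creates new sectors $\mathbb{P}(N_\lambda)\to C^g$ for eigenvalues $\lambda\neq 1$ of $g$ on $N_C|_{C^g}$, and a fixed component contained in $C$ has empty strict transform, so injectivity of pullback on the Bott--Chern cohomology of the inertia orbifold has to be checked blow-up by blow-up via the blow-up and projective-bundle formulas rather than quoted from the manifold case.
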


\begin{rem}
Although Theorem~\ref{thm-coherent-chernclass} is stated in terms of the Bott--Chern cohomology of the inertia orbifold ${IX}_\orb$ of $X_\orb$, it can indeed be applied to $X_\orb$. We first explain the definition of the Chern character for an orbi-bundle $E_\orb$ given in \cite[Section 1.2]{ma2005orbifolds}.
Following the notation in \cite[Page~2211]{ma2005orbifolds}, we write $IX_\orb=X_\orb\sqcup \Sigma X_\orb$. Then $\mathrm{ch}^\orb(E_\orb)$ can be regarded as the projection of $\mathrm{ch}_{BC}(E_\orb)$ from $H_{BC}^*(IX_\orb,\C)$ to $H_{BC}^*(X_\orb,\C)$. Similarly, the restriction of the induced morphism between inertia orbifolds to the original orbifold coincides with the original morphism (see \cite[Remark 2.56]{ma2025superconnection}).
\end{rem}

We now use this fact to establish the following statement.
\begin{prop}[{cf.~\cite[Lemma 2.3]{Kaw92}, \cite[Lemma~2.13]{IMM24}}]
\label{prop-exact-sequence}
Let \( X \) be a compact normal analytic variety with quotient singularities in codimension \( 2 \). Let \( \alpha_1, \ldots, \alpha_{n-2} \in H^{1,1}_{BC}(X) \) be nef classes, and set \( \polar := \alpha_1 \cdots \alpha_{n-2} \).

Consider an exact sequence of torsion-free sheaves
\[
0 \to \mathcal{E} \to \mathcal{F} \to \mathcal{G} \to 0,
\]
where \( \mathcal{E} \) and \( \mathcal{F} \) are reflexive. Then the following statements hold:
\begin{align}
\label{eq-1stchern-exact}
&\widehat{c}_1(\mathcal{F})^2 \cdot \polar 
= \left( \widehat{c}_1(\mathcal{E})^2 + \widehat{c}_1(\mathcal{G}^{\vee\vee})^2 + 2\,\widehat{c}_1(\mathcal{E}) \cdot \widehat{c}_1(\mathcal{G}^{\vee\vee}) \right) \cdot \polar. \\
\label{eq-2ndchern-exact}
&\widehat{c}_2(\mathcal{F}) \cdot \polar 
\ge \left( 
\widehat{c}_2(\mathcal{E}) 
+ \widehat{c}_2(\mathcal{G}^{\vee\vee}) 
+ \widehat{c}_1(\mathcal{E}) \cdot \widehat{c}_1(\mathcal{G}^{\vee\vee}) 
\right) \cdot \polar.
\end{align}
\end{prop}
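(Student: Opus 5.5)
Following \cite[Lemma 2.3]{Kaw92} and \cite[Lemma~2.13]{IMM24}, we reduce everything to an orbifold computation on a suitable modification. As in Subsection~\ref{subsec-intersection-orbifold-nonpluripolar}, we choose a bimeromorphic morphism $q\colon Z\to X$ with $Z$ carrying an orbifold structure $Z_{\orb}$. We arrange it so that the orbi-sheaves induced by $\mathcal{E}$, $\mathcal{F}$ and $\mathcal{G}^{\vee\vee}$ become simultaneously locally free, giving orbi-bundles $E_{\orb}$, $F_{\orb}$, $G'_{\orb}$. This is possible by applying the functorial flattening of \cite[Theorem 3.10]{DO23} successively. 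By \cite[Lemma 2.2]{Ou25b}, all intersection numbers in \eqref{eq-1stchern-exact} and \eqref{eq-2ndchern-exact} may then be computed on $Z$ against $q^*\polar$.

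The first key observation is that the locus where the exact sequence fails to be an exact sequence of bundles has codimension at least $2$ in $X$. Indeed, $\mathcal{G}$ is torsion-free, so $\mathcal{G}\to\mathcal{G}^{\vee\vee}$ is an isomorphism off a closed set $S$ of codimension $\ge 2$. Outside $S\cup X_{\mathrm{sing}}$, the sequence $0\to\mathcal{E}\to\mathcal{F}\to\mathcal{G}^{\vee\vee}\to 0$ is an exact sequence of vector bundles. Since $q^*\polar$ consists of pullbacks of nef classes from $X$, cycles contracted by $q$ should contribute nothing to $c_1$-type terms. Thus $c_1^{\orb}(F_{\orb})=c_1^{\orb}(E_{\orb})+c_1^{\orb}(G'_{\orb})+D$, with $D$ a combination of $q$-exceptional divisors. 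Pairing with $q^*\alpha_1\cdots q^*\alpha_{n-1}$ kills $D$. For \eqref{eq-1stchern-exact}, however, we are pairing against only $n-2$ classes, so we should instead work with reflexive $c_1$ classes on $X$. The $\widehat{c}_1$ of a reflexive sheaf equals the first Chern class of its determinant $\mathbb{Q}$-line bundle, since $X$ has quotient singularities in codimension $2$. The sequence gives $\det\mathcal{F}\cong(\det\mathcal{E}\otimes\det\mathcal{G}^{\vee\vee})^{\vee\vee}$, because both sides agree outside codimension $2$. This yields \eqref{eq-1stchern-exact} by bilinearity.

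For \eqref{eq-2ndchern-exact}, we will use the orbifold Grothendieck--Riemann--Roch of Theorem~\ref{thm-coherent-chernclass}. On $Z_{\orb}$ (or on the $X$-side orbifold chart in codimension $2$), the K-theory class satisfies $[F_{\orb}]=[E_{\orb}]+[G'_{\orb}]-[\mathcal{Q}_{\orb}]$, with $\mathcal{Q}:=\mathcal{G}^{\vee\vee}/\mathcal{G}$ supported in codimension $\ge 2$. Additivity of $\mathrm{ch}_{BC}$ gives $\mathrm{ch}_2(F)=\mathrm{ch}_2(E)+\mathrm{ch}_2(G')-\mathrm{ch}_2(\mathcal{Q})$. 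Combining this with the $c_1$ additivity converts the statement into
\[
\widehat{c}_2(\mathcal{F})\cdot\polar-(\widehat{c}_2(\mathcal{E})+\widehat{c}_2(\mathcal{G}^{\vee\vee})+\widehat{c}_1(\mathcal{E})\cdot\widehat{c}_1(\mathcal{G}^{\vee\vee}))\cdot\polar=\mathrm{ch}_2(\mathcal{Q})\cdot\polar .
\]
By the GRR formula for embeddings (Theorem~\ref{thm-coherent-chernclass}(3)), $\mathrm{ch}_2$ of a sheaf supported in codimension $2$ is the pushforward of its codimension-$2$ cycle with positive multiplicities, i.e.\ $\sum m_j[W_j]$ with $m_j$ the lengths at generic points of the $W_j$. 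The Todd correction only affects higher degrees. Hence the right-hand side equals $\sum m_j\,\polar\cdot[W_j]\ge0$, since the $\alpha_i$ are nef and so have nonnegative degree on effective cycles.

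The main obstacle is making this rigorous in the non-Kähler, singular setting. The issue is that $\widehat{c}_2$ is defined on a modification $Z$, where the pulled-back sequence need not be exact and new exceptional contributions appear. To handle this, we plan to perform the K-theoretic comparison on $Z_{\orb}$ with orbi-sheaves $q^*(\cdot)/\Tor$. We would show that every discrepancy is either supported on $q$-exceptional loci, which are killed against $q^*\polar$ after the degree count, or is the pushforward of an effective codimension-$2$ cycle. This pushforward identification uses GRR for the orbifold embedding of (a resolution of) its support. Nonnegativity of nef classes on effective cycles then follows by approximating with $-\varepsilon\omega$ representatives.
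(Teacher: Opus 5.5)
Your outline has the same skeleton as the paper's proof: pass to an orbifold model, get additivity of $\mathrm{ch}$ up to a torsion term supported in codimension $2$, apply orbifold GRR, and conclude by nefness. At its crux, however, it contains an actual error and an unfilled gap.

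\textbf{The error.} Your identity with correction term $\mathrm{ch}_2(\mathcal Q)\cdot\polar$, $\mathcal Q=\mathcal G^{\vee\vee}/\mathcal G$, is false once $X$ has quotient singularities along a codimension-$2$ locus, because reflexive pullback to orbifold charts is not exact. Take $n=2$, $X=\mathbb P(1,1,2)$, and the exact sequence of reflexive sheaves $0\to\mathcal O_X(-1)\to\mathcal O_X^{\oplus2}\xrightarrow{(u,v)}\mathcal O_X(1)\to0$. Here $\mathcal Q=0$, yet the left-hand side of your identity equals $-\widehat c_1(\mathcal O_X(-1))\cdot\widehat c_1(\mathcal O_X(1))=\tfrac12$. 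On the chart $\mathbb C^2\to\mathbb C^2/\{\pm1\}$ at the vertex, the sequence becomes the Koszul complex of $(u,v)$. Its cokernel is the skyscraper at the origin, whose orbifold $\mathrm{ch}_2$ is exactly $\tfrac12$. The correct correction term is $\mathrm{ch}_2$ of the cokernel $\mathcal T_\orb$ of $\mathcal F_\orb\to\mathcal G_\orb$, formed on the charts. It is still effective, so the inequality survives, but $\mathcal T_\orb$ is the sheaf GRR must be applied to.

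\textbf{The gap.} Your treatment of non-exactness on the modification rests on the claim that discrepancies supported on $q$-exceptional loci are killed by $q^*\polar$ after a degree count. In degree $4$ this holds only for loci lying over a subset of $X$ of codimension $\ge3$. For an exceptional divisor $D$ over a codimension-$2$ subvariety, terms such as $D^2\cdot q^*\polar$, $D\cdot c_1^{\orb}(E_\orb)\cdot q^*\polar$, or $\mathrm{ch}_2$ of torsion sheaves on $D$ are in general nonzero and of indefinite sign. This is the very phenomenon you yourself noticed in the $c_1^2$ part. Successive flattenings of $q^*(\cdot)/\Tor$ do not rule it out: over the vertex above, the torsion-free pullback of $\mathcal O_X(1)$ to the chart is the maximal ideal, so flattening already blows up a codimension-$2$ point.

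\textbf{How the paper fills it.} Use the orbifold modification $f\colon Y\to X$ of \cite{Ou24,KO25}, which is an isomorphism over some $X^\circ$ with $\codim(X\setminus X^\circ)\ge3$. Then work directly with the reflexive orbi-sheaves $\{f_i^{[*]}\mathcal E\}$, $\{f_i^{[*]}\mathcal F\}$, $\{f_i^{[*]}\mathcal G^{\vee\vee}\}$, without flattening. \cite[Lemma 4.9]{ZZZ25} identifies $\widehat c_2(\mathcal E)\cdot\polar$ with their orbifold $c_2$, and $\mathrm{ch}_{BC}$ from Theorem~\ref{thm-coherent-chernclass} is defined for coherent orbi-sheaves. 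The sequence $0\to\mathcal E_\orb\to\mathcal F_\orb\to\mathcal G_\orb\to\mathcal T_\orb\to0$ is exact over $f^{-1}(X^\circ)$, and \cite[Lemma 2.2]{Ou25b} discards everything supported over $X\setminus X^\circ$ when pairing with $f^*\polar$. This gives the following:
\begin{itemize}
\item \eqref{eq-1stchern-exact} follows at once, since $c_1^{\orb}(\mathcal T_\orb)$ vanishes on $f^{-1}(X^\circ)$. There is no need to pass to determinants on $X$. Those are in general $\mathbb Q$-Cartier only on the quotient-singular locus, so your ``determinant $\mathbb Q$-line bundle'' step on $X$ needs this same reduction anyway.
\item \eqref{eq-2ndchern-exact} reduces to $c_2^{\orb}(\mathcal T_\orb)\cdot f^*\polar\le0$. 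This is the GRR argument you had in mind, applied to the pure codimension-$2$ part of $\supp\mathcal T_\orb$.
\end{itemize}
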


\begin{proof}
Let \( f:Y \to X \) be the orbifold modification as in \cite[Theorem 1.2]{Ou24} and \cite[Theorem 1]{KO25}, where \( Y \) admits a standard orbifold structure \( Y_{\orb} = \{(V_i, H_i, \rho_i)\}_{i \in I} \), and there exists a Zariski open subset \( X^{\circ} \subset X \) such that \( \codim (X \setminus X^{\circ}) \ge 3 \) and \( f \) is an isomorphism over \( X^{\circ} \). Set \( f_i := f \circ \rho_i : V_i \to X \). We define the reflexive orbi-sheaf \( \mathcal{E}_{\orb} := \{f_i^{[*]}\mathcal{E}\}_{i \in I} \) on \( Y_{\orb} \). By \cite[Lemma 4.9]{ZZZ25}, we obtain
\begin{equation*}
\widehat{c}_2(\mathcal{E}) \cdot \polar = c_{2}^{\orb}(\mathcal{E}_{\orb}) \cdot f^{*}\polar.
\end{equation*}
Thus, we may work with orbi-sheaves on \( Y_{\orb} \). Define \( \mathcal{F}_{\orb} := \{f_i^{[*]}\mathcal{F}\}_{i \in I} \) and \( \mathcal{G}_{\orb} := \{f_i^{[*]}(\mathcal{G}^{\vee\vee})\}_{i \in I} \) on \( Y_{\orb} \). Then we have the following exact sequence on \( Y_{\orb} \):
\begin{equation}
\label{eq-IMM24-1}
0 \to \mathcal{E}_{\orb} \to \mathcal{F}_{\orb} \overset{\phi_{\orb}}{\to} \mathcal{G}_{\orb} \to \mathcal{T}_{\orb} \to 0,
\end{equation}
where \( \mathcal{T}_{\orb} := \Coker(\phi_{\orb}) \) is a torsion orbi-sheaf. Since each \( f_i \) is \'{e}tale in codimension one, the map \( \mathcal{E}_{\orb} \to \mathcal{F}_{\orb} \) is injective and \( \codim(\supp(\mathcal{T}_{\orb}) \cap f^{-1}(X \setminus X^{\circ})) \ge 2 \). Therefore, we have \( c_1^{\orb}(\mathcal{T}_{\orb})|_{f^{-1}(X^{\circ})} = 0 \), which yields
\[
c_{1}^{\orb}(\mathcal{F}_{\orb})^2 \cdot f^*\Omega 
= \big(c_{1}^{\orb}(\mathcal{E}_{\orb})^2 + c_{1}^{\orb}(\mathcal{G}_{\orb})^2 + 2\,c_1^{\orb}(\mathcal{E}_{\orb}) \cdot c_1^{\orb}(\mathcal{G}_{\orb})\big) \cdot f^*\Omega,
\]
by \cite[Lemma 2.2]{Ou25b}. Hence, \eqref{eq-1stchern-exact} follows.

We now prove \eqref{eq-2ndchern-exact}. By \eqref{eq-IMM24-1}, we have
\begin{equation*}
c_2^\orb(\mathcal{F}_\orb) \cdot f^*\Omega 
= \big(c_2^\orb(\mathcal{E}_\orb) + c_2^\orb(\mathcal{G}_\orb) + c_1^\orb(\mathcal{E}_\orb) \cdot c_1^\orb(\mathcal{G}_\orb) - c_2^\orb(\mathcal{T}_\orb)\big) \cdot f^*\Omega.
\end{equation*}
Therefore, it suffices to show that \( c_2^\orb(\mathcal{T}_\orb) \cdot f^*\Omega \leq 0 \). The argument is similar to that in \cite[Theorem 7.9]{CHP16}. Let \( S_\orb \) be the component of \( \supp(\mathcal{T}_\orb) \) of pure dimension \( n-2 \). We may assume that all irreducible components \( S_{1,\orb}, \ldots, S_{m,\orb} \) of \( S_\orb \) are smooth, after taking an orbifold resolution of singularities if necessary. Set \( \mathcal{Q}_\orb := (i_{S_\orb})_* (\mathcal{T}_\orb|_{S_\orb}) \), where \(i_{S_\orb}: S_\orb \to Y_\orb \) is the natural inclusion.

Since \( c_2^\orb(\mathcal{T}_\orb)|_{f^{-1}(X^{\circ})} = c_2^\orb(\mathcal{Q}_\orb)|_{f^{-1}(X^{\circ})} \), it follows from \cite[Lemma 2.2]{Ou25b} that
\[
c_2^\orb(\mathcal{T}_\orb) \cdot f^*\polar = c_2^\orb(\mathcal{Q}_\orb) \cdot f^*\polar.
\]
Thus, it is enough to show that \( c_2^\orb(\mathcal{Q}_\orb) \cdot f^*\Omega \leq 0 \). Since $\mathcal{Q}_\orb$ is supported in codimension two, Theorem~\ref{thm-coherent-chernclass}~(3) implies that
\[
c_2^\orb(\mathcal{Q}_\orb) = -\sum_{l=1}^{m} a_l  \{S_{l,\orb}\}
\]
for some coefficients $a_l \geq 0$. (See also \cite[Remark 8]{Cao13}.)
As \( \alpha_1, \ldots, \alpha_{n-2} \) are nef, we conclude that \( c_2^\orb(\mathcal{Q}_\orb) \cdot f^*\Omega \leq 0 \), completing the proof.
\end{proof}

\section{Miyaoka's inequality and proofs of main results}
\label{sec-Miyaoka's-inequality}
In this section, we aim to extend Miyaoka's inequality to analytic varieties in Fujiki's class. We prove Theorems~\ref{thm-Miyaoka-ineq-nonuniruled}--\ref{thm-nef-anti-canonical} and Corollaries~\ref{cor-MY-canonical}--\ref{cor-MY-anti-canonical}.

\subsection{Generic nefness theorem}
\label{subsec-generic-neffness-thm}
The \textit{generic nefness theorem} was originally proved by Miyaoka \cite{Miy87}. It states that if a normal projective variety $X$ is non-uniruled, then $\mu^{\min}_{H_1 \cdots H_{n-1}}(\Omega_X^{[1]}) \ge 0$ holds for any ample divisors $H_1, \ldots, H_{n-1}$. We will define the minimum slope $\mu_{H_1 \cdots H_{n-1}}^{\min}(\Omega_{X}^{[1]})$ later. This result has been extended to the setting where $X$ is a klt K\"ahler variety as follows.

\begin{thm}$($\cite[Theorem 1.4]{Eno88}, \cite[Theorem 1.2]{Cao13}, \cite[Theorem 4.1]{Gue16}$)$
\label{thm-Enoki-genericnef}
Let $X$ be a compact klt K\"ahler variety. If $K_X$ $($resp.~$-K_X$$)$ is nef, then $\mu_{\{\omega\}^{n-1}}^{\min}(\Omega_{X}^{[1]}) \ge 0$ $($resp.~$\mu_{\{\omega\}^{n-1}}^{\min}(\mathcal{T}_{X}) \ge 0$$)$ for any K\"ahler form $\omega$.
\end{thm}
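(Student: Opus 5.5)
Since the statement is cited from the literature (Enoki, Cao, Guenancia), I would reduce it to the classical argument via Kähler–Einstein type metrics with small perturbations. The approach is the same in both cases, so I describe it for $K_X$ nef and indicate the sign changes for $-K_X$ nef.

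\emph{Step 1: reduction to a slope bound for reflexive quotients.} The claim $\mu^{\min}_{\{\omega\}^{n-1}}(\Omega_X^{[1]})\ge 0$ means that for every torsion-free quotient $\Omega_X^{[1]}\to\mathcal{Q}\to 0$ we have $c_1(\mathcal{Q})\cdot\{\omega\}^{n-1}\ge 0$. Dually, it means that every saturated subsheaf $\mathcal{F}\subset\mathcal{T}_X$ satisfies
\[
c_1(\mathcal{F})\cdot\{\omega\}^{n-1}\le \tfrac{\rk\mathcal{F}}{n}\,\cdot 0+ \varepsilon\text{-errors},
\]
or more precisely $\mu(\mathcal{F})\le 0$ when $K_X$ is nef. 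The case $-K_X$ nef instead asks for $\mu(\mathcal{Q})\ge 0$ for quotients of $\mathcal{T}_X$.

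\emph{Step 2: construct approximate Kähler–Einstein metrics.} Since $K_X$ is nef and $X$ is klt Kähler, for each $\varepsilon>0$ the class $c_1(K_X)+\varepsilon\{\omega\}$ is Kähler. By the singular Monge–Ampère theory (Eyssidieux–Guedj–Zeriahi, extended to Kähler klt spaces), we solve
\[
\mathrm{Ric}(\omega_\varepsilon)=-\omega_\varepsilon+\varepsilon\omega
\]
on $X_{\reg}$, with $\omega_\varepsilon\in c_1(K_X)+\varepsilon\{\omega\}$ having bounded potentials. In the $-K_X$ nef case we instead solve $\mathrm{Ric}(\omega_\varepsilon)=\varepsilon\omega_\varepsilon+(\text{smooth form in }c_1(-K_X)-\varepsilon\{\omega\})$. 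More simply, following Cao and Guenancia, we take a metric $\omega_\varepsilon\in\{\omega\}$ with $\mathrm{Ric}(\omega_\varepsilon)\ge -\varepsilon\omega_\varepsilon-\varepsilon\omega$, which exists by Yau's theorem applied to the nef class $-K_X$.

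\emph{Step 3: curvature decreasing on subsheaves, and integration.} On $X_{\reg}$ minus the locus where $\mathcal{F}$ fails to be a subbundle, the Chern–Weil formula gives $c_1(\mathcal{F},\omega_\varepsilon)\wedge\omega_\varepsilon^{n-1}\le \operatorname{tr}_{\mathcal F}(\text{mean curvature})$. Using the Einstein equation, the mean curvature of $T_X$ is $-\mathrm{Id}+\varepsilon\,\mathrm{tr}_{\omega_\varepsilon}\omega$. Integrating, and controlling the contribution of the second fundamental form (which has the correct sign), we obtain
\[
c_1(\mathcal{F})\cdot(c_1(K_X)+\varepsilon\{\omega\})^{n-1}\le \varepsilon\, C\,\rk\mathcal{F}\int\omega\wedge\omega_\varepsilon^{n-1}-\rk\mathcal{F}\,(\text{vol})\cdot 0.
\]
This has to be rewritten in terms of $\{\omega\}^{n-1}$ rather than the Einstein class. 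Here I would follow Enoki/Cao: we choose $\omega_\varepsilon$ in the fixed class $\{\omega\}$ and solve a twisted equation $\mathrm{Ric}(\omega_\varepsilon)+\theta_\varepsilon=\lambda\omega_\varepsilon$ with $\theta_\varepsilon$ representing $K_X$ plus a small error. The $L^1$ norm of the negative part of the error is then $O(\varepsilon)$, so letting $\varepsilon\to 0$ gives $\mu_{\{\omega\}^{n-1}}(\mathcal{F})\le 0$. The integration by parts across singularities is justified because $X_{\sing}$ and the non-locally-free locus of $\mathcal{F}$ have codimension $\ge 2$, the potentials are bounded, and cut-off functions have vanishing energy.

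The main obstacle is Step 3: obtaining uniform $L^1$ control of $\varepsilon\,\mathrm{tr}_{\omega_\varepsilon}\omega$ relative to the fixed polarization $\{\omega\}^{n-1}$ on a singular space. I would first try to pass to a resolution, using the klt condition to keep the Monge–Ampère densities in $L^p$, and then use Guenancia's conic approximation.
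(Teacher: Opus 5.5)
The paper does not prove this statement; it is quoted from Enoki, Cao and Guenancia. Your outline is the analytic argument behind those references, and for compact K\"ahler manifolds it works once Step 2 is straightened out.

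\textbf{Step 2.} Your K\"ahler--Einstein metrics in $c_1(K_X)+\varepsilon\{\omega\}$ only control slopes with respect to $(c_1(K_X)+\varepsilon\{\omega\})^{n-1}$, not $\{\omega\}^{n-1}$. The equations with positive Einstein constant ($\mathrm{Ric}(\omega_\varepsilon)=\varepsilon\omega_\varepsilon+\cdots$ for $-K_X$ nef, and $\lambda>0$ in your twisted equation for $K_X$ nef) are Fano-type equations with no general existence theory, and they are unnecessary. All you need is the Calabi--Yau theorem in the fixed class, as in your ``more simply'' remark. Pick $\theta_\varepsilon\in c_1(K_X)$ with $\theta_\varepsilon\ge-\varepsilon\omega$ and solve $\mathrm{Ric}(\omega_\varepsilon)=-\theta_\varepsilon$ with $\omega_\varepsilon\in\{\omega\}$ (resp.\ $\mathrm{Ric}(\omega_\varepsilon)=\theta_\varepsilon$ with $\theta_\varepsilon\in c_1(-K_X)$). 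Then:
\begin{itemize}
\item the Ricci endomorphism is $\le\varepsilon\,\omega^\sharp$ (resp.\ $\ge-\varepsilon\,\omega^\sharp$);
\item its trace on a saturated $\mathcal{F}\subset\mathcal{T}_X$ is bounded above by $\varepsilon\,\mathrm{tr}_{\omega_\varepsilon}\omega$ (resp.\ its trace on $\mathcal{F}^\perp$ is bounded below by $-\varepsilon\,\mathrm{tr}_{\omega_\varepsilon}\omega$, using $\deg\mathcal{Q}=\deg\mathcal{T}_X-\deg\mathcal{F}$ for quotients);
\item $\int\mathrm{tr}_{\omega_\varepsilon}\omega\,\omega_\varepsilon^n=n\{\omega\}^n$ exactly.
\end{itemize}
So the uniform $L^1$ bound on $\varepsilon\,\mathrm{tr}_{\omega_\varepsilon}\omega$ that you call the main obstacle is automatic. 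In the singular setting the integral is over $X_{\reg}$, and it still equals $n\{\omega\}^n$ because the potentials are bounded (Bedford--Taylor).

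\textbf{The real gap.} The missing step is the one you dispose of in a single sentence: the inequality $c_1(\mathcal{F})\cdot\{\omega\}^{n-1}\le\int_{X_{\reg}}(\cdots)\,\omega_\varepsilon^{n}$ for the incomplete metric $\omega_\varepsilon$ on $X_{\reg}$. Codimension $\ge 2$, bounded potentials and cut-offs of small energy do not give it, for three reasons.
\begin{itemize}
\item Bounded potentials give no control of $\omega_\varepsilon$ as a tensor near $X_{\mathrm{sing}}$, hence none on the metric induced on $\det\mathcal{F}$.
\item $\det\mathcal{F}$ need not be $\mathbb{Q}$-Cartier, so $c_1(\mathcal{F})$ is only defined through a resolution, and one must relate that degree to a Chern--Weil integral on $X_{\reg}$.
\item The cut-off argument also needs integrability of the connection and second fundamental form terms near $X_{\mathrm{sing}}$, which nothing in your setup provides.
\end{itemize}
Guenancia's proof handles exactly this: it works on a log resolution with conic-type approximating metrics adapted to the klt discrepancies, and shows that the resulting error terms along the exceptional divisors vanish in the limit. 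This, together with the $L^p$ bound on the Monge--Amp\`ere densities, is where the klt hypothesis enters. Your proposal names this tool but does not carry it out. As it stands, you have the smooth-case argument plus a pointer to the literature for the klt case, which is in effect what the paper does by citation.
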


We now consider the case where $X$ belongs to Fujiki's class and the slope is defined with respect to $\alpha_1 \cdots \alpha_{n-1}$, as described in the following setup.

\begin{setup}
\label{setup-genericnef}
Let $X$ be a compact normal analytic variety in Fujiki's class, and let 
$\alpha_1, \ldots, \alpha_{n-1}\in H^{1,1}_{\mathrm{BC}}(X)$ be nef classes. Set
$$
\gamma := 
\delta^1(\alpha_1) \cdots \delta^1(\alpha_{n-1}) \in H^{2n-2}(X, \R).
$$
Here we use the morphism $\delta^1 : H^{1,1}_{\mathrm{BC}}(X) \to H^2(X, \R)$ as in \eqref{eq-connect-h2}, and the above product is the cup product in singular cohomology.
\end{setup}

Under Setup~\ref{setup-genericnef}, for any effective divisor $D$, we have $c_1(\mathcal{O}_X(D))\cdot \gamma \ge 0$ since all $\alpha_i$ are nef. Thus, for any torsion-free sheaves $\mathcal{F} \subset \mathcal{G}$ of the same rank, we have $c_1(\mathcal{F}) \cdot \gamma \le c_1(\mathcal{G}) \cdot \gamma$. Therefore, we can prove the following properties by the same argument as in \cite[Section 2]{GKP16a}, \cite[Subsection 3.2]{DO23}, and \cite[Subsubsection 2.2.3]{ZZZ25}. 
In the following statement, we assume that $\mathcal{E}$ is a torsion-free sheaf on $X$, and for every subsheaf $0 \neq \mathcal{F} \subseteq \mathcal{E}$, we define
\(
\mu_{\gamma}(\mathcal{F}):=\frac{c_1(\mathcal{F})\cdot\gamma}{\rk(\mathcal{F})}
\).

\begin{enumerate}
\item The maximum slope of $\mathcal{E}$ is defined by
$$
\mu^{\max}_{\gamma}(\mathcal{E}) := \sup \left\{ \mu_{\gamma}(\mathcal{F}) \mid 0 \neq \mathcal{F} \subseteq \mathcal{E}, \, \mathcal{F} \text{ torsion-free} \right\}.
$$
That is, the supremum is taken over all non-zero torsion-free subsheaves $\mathcal{F} \subseteq \mathcal{E}$. Then $\mu^{\max}_{\gamma}(\mathcal{E})$ is finite. Moreover, $\mathcal{E}$ is $\gamma$-semistable if and only if $\mu^{\max}_{\gamma}(\mathcal{E}) = \mu_{\gamma}(\mathcal{E})$.

\item There exists a saturated subsheaf $\mathcal{E}_{\max} \subset \mathcal{E}$ such that $\mu^{\max}_{\gamma}(\mathcal{E}) = \mu_{\gamma}(\mathcal{E}_{\max})$. This subsheaf $\mathcal{E}_{\max}$ is called the \textit{maximal destabilizing subsheaf}, and it is $\gamma$-semistable.

\item The minimum slope is defined by
$$
\mu^{\min}_{\gamma}(\mathcal{E}) := \inf \left\{ \mu_{\gamma}(\mathcal{Q}) \mid \mathcal{E} \twoheadrightarrow \mathcal{Q}, \, \mathcal{Q} \text{ non-zero torsion-free} \right\}.
$$
That is, the infimum is taken over all non-zero torsion-free quotient sheaves $\mathcal{Q}$ of $\mathcal{E}$. Then we have $\mu^{\max}_{\gamma}(\mathcal{E}^\vee) = -\mu^{\min}_{\gamma}(\mathcal{E})$. We often say that $\mathcal{E}$ is \textit{$\gamma$-generically nef} if $\mu^{\min}_{\gamma}(\mathcal{E}) \ge 0$.

\item Every torsion-free sheaf admits a Harder--Narasimhan filtration, and every semistable torsion-free sheaf admits a Jordan--Hölder filtration.
\end{enumerate}

\begin{prop}
\label{prop-CP-foliation}
$[${cf.~\cite[Theorem~1.4]{CP19},
\cite[Proposition~2.2]{Ou17},
\cite[Theorem~1.4]{Ou25},
\cite[Corollary~1.5]{CP25}}$]$
Under Setup~\ref{setup-genericnef}, let \( \mathcal{F} \subset \mathcal{T}_X \) be a saturated subsheaf on \( X \). Assume that one of the following holds:
\begin{enumerate}[label=$(\arabic*)$]
    \item \( \mu^{\min}_{\gamma}(\mathcal{F}) > 0 \) and \( 2\mu^{\min}_{\gamma}(\mathcal{F}) > \mu^{\max}_{\gamma}(\mathcal{T}_X / \mathcal{F}) \).
    \item \( \mu^{\min}_{\gamma}(\mathcal{F}) > 0 \) and \( \mu_{\gamma}(\mathcal{F}) = \mu^{\max}_{\gamma}(\mathcal{T}_X) \).
\end{enumerate}
Then \( \mathcal{F} \) is an algebraic foliation with rationally connected leaves.
\end{prop}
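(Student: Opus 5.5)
The plan is to reduce to a compact K\"ahler manifold and apply the foliation criteria of \cite{Ou25, CP25} there. As a first step I reduce condition $(2)$ to condition $(1)$. Suppose $\mu_\gamma(\mathcal{F})=\mu^{\max}_\gamma(\mathcal{T}_X)$. Every subsheaf of $\mathcal{F}$ has slope at most $\mu^{\max}_\gamma(\mathcal{T}_X)=\mu_\gamma(\mathcal{F})$, so $\mathcal{F}$ is $\gamma$-semistable and $\mu^{\min}_\gamma(\mathcal{F})=\mu_\gamma(\mathcal{F})>0$. Now let $\mathcal{G}\subset \mathcal{T}_X/\mathcal{F}$ be a subsheaf with $\mu_\gamma(\mathcal{G})>\mu_\gamma(\mathcal{F})$. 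Its preimage in $\mathcal{T}_X$ would have slope strictly bigger than $\mu_\gamma(\mathcal{F})=\mu^{\max}_\gamma(\mathcal{T}_X)$, which is impossible. Hence $\mu^{\max}_\gamma(\mathcal{T}_X/\mathcal{F})\le \mu_\gamma(\mathcal{F})<2\mu^{\min}_\gamma(\mathcal{F})$, so condition $(1)$ holds and it suffices to treat $(1)$.

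Next I pass to a resolution $\pi\colon \widetilde X\to X$ with $\widetilde X$ a compact K\"ahler manifold; this is possible since $X$ is in Fujiki's class. Let $\widetilde{\mathcal{F}}\subset \mathcal{T}_{\widetilde X}$ be the saturation of the subsheaf that agrees with $\mathcal{F}$ over the locus where $\pi$ is an isomorphism. Put $\widetilde\gamma:=\pi^*\gamma$, a product of nef classes $\pi^*\alpha_i$. By \eqref{eq-pullback-intersection} and the projection formula, $\pi$-exceptional divisors $E$ satisfy $c_1(\mathcal{O}(E))\cdot\widetilde\gamma=0$, because $\pi_*[E]=0$ in $H_{2n-2}(X,\R)$. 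Consequently two torsion-free sheaves on $\widetilde X$ that agree off the exceptional locus have the same $\widetilde\gamma$-slope. Saturated subsheaves and torsion-free quotients of $\widetilde{\mathcal{F}}$ and $\mathcal{T}_{\widetilde X}/\widetilde{\mathcal{F}}$ correspond bijectively to those of $\mathcal{F}$ and $\mathcal{T}_X/\mathcal{F}$, via restriction to the isomorphism locus followed by saturation or reflexive extension. It follows that $\mu^{\min}_{\widetilde\gamma}(\widetilde{\mathcal{F}})=\mu^{\min}_\gamma(\mathcal{F})$ and $\mu^{\max}_{\widetilde\gamma}(\mathcal{T}_{\widetilde X}/\widetilde{\mathcal{F}})=\mu^{\max}_\gamma(\mathcal{T}_X/\mathcal{F})$. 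So the hypothesis transfers to $(\widetilde X,\widetilde{\mathcal{F}},\widetilde\gamma)$.

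On $\widetilde X$ I argue as follows. Integrability comes from the O'Neill tensor $\wedge^2\widetilde{\mathcal{F}}\to \mathcal{T}_{\widetilde X}/\widetilde{\mathcal{F}}$. The source has $\mu^{\min}\ge 2\mu^{\min}_{\widetilde\gamma}(\widetilde{\mathcal{F}})$, using that tensor products of semistable sheaves are semistable for such classes (approximate $\pi^*\alpha_i$ by K\"ahler classes). This exceeds $\mu^{\max}$ of the target, so the map vanishes. Algebraicity and rational connectedness of general leaves then follow from \cite[Theorem~1.4]{Ou25} and \cite[Corollary~1.5]{CP25}, applied with the class $\widetilde\gamma$. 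Finally, since $\widetilde{\mathcal{F}}$ and $\mathcal{F}$ coincide on a dense Zariski open set, the leaf closures descend, and $\mathcal{F}$ is an algebraic foliation with rationally connected leaves.

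The main obstacle is checking that $\widetilde\gamma$, a product of merely nef (not K\"ahler) classes, is admissible in \cite{Ou25, CP25}, and that semistability is preserved under tensor products for it. I would first try to show that $\widetilde\gamma$ lies in the closure of the cone of movable classes, by writing it as a limit of products of K\"ahler classes $\pi^*\alpha_i+\varepsilon\omega$. Because both slope hypotheses in $(1)$ are strict inequalities, they should persist for small $\varepsilon$, provided the Harder--Narasimhan data vary semicontinuously.
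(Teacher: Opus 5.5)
Your proposal is correct and follows the paper's proof essentially step for step: reduce $(2)$ to $(1)$ via $\mu_\gamma(\mathcal{F})\ge\mu^{\max}_\gamma(\mathcal{T}_X/\mathcal{F})$, transfer the slope conditions to a K\"ahler resolution using \eqref{eq-pullback-intersection} and the fact that exceptional divisors pair to zero with $\pi^*\gamma$, invoke \cite[Theorem~1.4]{Ou25} and \cite[Corollary~1.5]{CP25}, and descend via $\mathcal{F}\cong(\pi_*\widetilde{\mathcal{F}})^{\vee\vee}$. The paper handles the obstacle you flag simply by observing that $\pi^*\gamma$ is movable (it pairs nonnegatively with every psef class), which is what those results require; your approximation by $\gamma_\varepsilon:=\prod_i(\pi^*\alpha_i+\varepsilon\{\omega\})$ also works, and the semicontinuity you need is easy, since expanding $\gamma_\varepsilon$ in $\varepsilon$ and using finiteness of $\mu^{\max}$ for products of nef classes gives $\mu^{\max}_{\gamma_\varepsilon}(\mathcal{G})\le\mu^{\max}_{\pi^*\gamma}(\mathcal{G})+O(\varepsilon)$ for every torsion-free $\mathcal{G}$.
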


\begin{proof}
The proof is similar to that of \cite[Proposition 2.2]{Ou17}.
Consider case (1). Let \( \pi: \widetilde{X} \to X \) be a resolution such that \( \widetilde{X} \) is a compact K\"ahler manifold. Let \( \widetilde{\mathcal{F}} \subset \mathcal{T}_{\widetilde{X}} \) be the saturated subsheaf induced by \( \mathcal{F} \). Then \( \widetilde{\mathcal{F}} \) and \( \pi^{[*]} \mathcal{F} \) are isomorphic over \( \pi^{-1}(X_{\reg}) \). Hence, we have
$$
\mu_{\gamma}(\mathcal{F})  
\underalign{\eqref{eq-pullback-intersection}}{=}
\mu_{\pi^{*}\gamma}(\pi^{[*]} \mathcal{F}) = 
\mu_{\pi^{*}\gamma}(\widetilde{\mathcal{F}}).
$$
Similarly, we obtain
$$
\mu^{\min}_{\pi^{*}\gamma}(\widetilde{\mathcal{F}}) 
= \mu^{\min}_{\gamma}(\mathcal{F}) \quad \text{and} \quad
\mu^{\max}_{\pi^{*}\gamma}(\mathcal{T}_{\widetilde{X}}/\widetilde{\mathcal{F}}) 
= \mu^{\max}_{\gamma}(\mathcal{T}_X/\mathcal{F}).
$$
Thus, we have \( 2\mu^{\min}_{\pi^{*}\gamma}(\widetilde{\mathcal{F}}) > \mu^{\max}_{\pi^{*}\gamma}(\mathcal{T}_{\widetilde{X}}/\widetilde{\mathcal{F}}) \), and hence \( \widetilde{\mathcal{F}} \) is a foliation. Note that the class \( \pi^{*}\gamma \) is movable; that is, \( \pi^{*}\gamma \cdot \beta \ge 0 \) for every pseudo-effective (in short, psef) class \( \beta \) on $\widetilde{X}$.
Therefore, by applying \cite[Theorem 1.4]{Ou25} and \cite[Corollary 1.5]{CP25}, \( \widetilde{\mathcal{F}} \) is an algebraic foliation with rationally connected leaves. Since \( \mathcal{F} \cong (\pi_{*} \widetilde{\mathcal{F}})^{\vee\vee} \), the same conclusion holds for \( \mathcal{F} \).

Now consider case (2). In this case, we claim that \( 2\mu^{\min}_{\gamma}(\mathcal{F}) > \mu^{\max}_{\gamma}(\mathcal{T}_X / \mathcal{F}) \).
Indeed, from the assumption \( \mu_{\gamma}(\mathcal{F}) = \mu^{\max}_{\gamma}(\mathcal{T}_X) \), we obtain
$$
\mu^{\min}_{\gamma}(\mathcal{F}) = \mu_{\gamma}(\mathcal{F}) 
\ge \mu^{\max}_{\gamma}(\mathcal{T}_X / \mathcal{F}).
$$
(See \cite[Remark 1.16]{Cla16}.) Since \( \mu^{\min}_{\gamma}(\mathcal{F}) > 0 \), it follows immediately that \( 2 \mu^{\min}_{\gamma}(\mathcal{F}) > \mu^{\max}_{\gamma}(\mathcal{T}_X / \mathcal{F}) \), and we are reduced to case (1).
\end{proof}

\begin{prop}[{cf.~\cite[Corollary 6.4]{Miy87}}]
\label{prop-genericnef--nonuniruled}
Under Setup~\ref{setup-genericnef}, assume one of the following:
\begin{enumerate}[label=$(\arabic*)$]
    \item $X$ is non-uniruled.
    \item $X$ has at most canonical singularities and $K_X$ is psef.
\end{enumerate}
Then we have $\mu^{\min}_{\gamma}(\Omega_X^{[1]}) \ge 0$.
\end{prop}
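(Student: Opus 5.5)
Assume for contradiction that $\mu^{\min}_{\gamma}(\Omega_X^{[1]}) < 0$. By item (3) of the list above, this means $\mu^{\max}_{\gamma}(\mathcal{T}_X) = -\mu^{\min}_{\gamma}(\Omega_X^{[1]}) > 0$, where $\mathcal{T}_X = (\Omega_X^{[1]})^\vee$. Let $\mathcal{F} := (\mathcal{T}_X)_{\max}$ be the maximal destabilizing subsheaf. It is saturated, $\gamma$-semistable, and satisfies $\mu_\gamma(\mathcal{F}) = \mu^{\max}_\gamma(\mathcal{T}_X) > 0$. Semistability gives $\mu^{\min}_\gamma(\mathcal{F}) = \mu_\gamma(\mathcal{F}) > 0$, so hypothesis (2) of Proposition~\ref{prop-CP-foliation} holds. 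Hence $\mathcal{F}$ is an algebraic foliation with rationally connected leaves. Since $\mathcal{F}\neq 0$, the general leaf is a positive-dimensional rationally connected subvariety through a general point of $X$. Therefore $X$ is covered by rational curves, i.e.\ $X$ is uniruled. This contradicts assumption (1) and settles that case.

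For case (2), it suffices to show that a variety with canonical singularities and psef $K_X$ is non-uniruled; case (1) then applies. Take a resolution $\pi\colon \widetilde{X}\to X$ with $\widetilde{X}$ compact K\"ahler, which exists since $X$ is in Fujiki's class. Canonical singularities give $K_{\widetilde{X}} = \pi^*K_X + E$ with $E \ge 0$ $\pi$-exceptional. Here $\pi^*K_X$ is psef, since the pullback of a psef class by a bimeromorphic morphism stays psef (pull back a positive current with local potentials). So $K_{\widetilde{X}}$ is psef. On a compact K\"ahler manifold, a psef canonical bundle rules out uniruledness: if $\widetilde{X}$ were covered by rational curves, a general member $C$ of a covering family would be free with $K_{\widetilde{X}}\cdot C<0$. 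On the other hand, a psef class has nonnegative degree on any curve through a very general point, since its positive current has Lelong numbers vanishing outside a countable union of analytic sets. This gives a contradiction. Uniruledness is bimeromorphically invariant, so $X$ is non-uniruled.

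The main obstacle is the step "foliation with rationally connected leaves $\Rightarrow$ uniruled" in the singular, merely Fujiki setting. It needs the conclusion of Proposition~\ref{prop-CP-foliation} to mean that the general leaf closure is a positive-dimensional rationally connected algebraic subvariety, which is how \cite{Ou25,CP25} state it on the K\"ahler resolution $\widetilde{X}$. I would pass to $\widetilde{X}$, deduce that $\widetilde{X}$ is uniruled, and transfer this to $X$ by bimeromorphic invariance. A secondary subtlety is the psef-versus-rational-curves argument for K\"ahler $\widetilde{X}$. If needed, this can be replaced by citing the standard fact that a uniruled compact K\"ahler manifold has $K$ not psef (e.g.\ via \cite{Ou25,CP25} or the Boucksom--Demailly--P\u{a}un--Peternell type result in the K\"ahler setting).
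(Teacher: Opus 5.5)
Your proposal is correct and follows essentially the same route as the paper. In case (1) the maximal destabilizing subsheaf of $\mathcal{T}_X$ satisfies hypothesis (2) of Proposition~\ref{prop-CP-foliation}, whose conclusion contradicts non-uniruledness; in case (2) the discrepancy formula $K_{\widetilde{X}} = \pi^*K_X + E$ makes $K_{\widetilde{X}}$ psef. The only difference is that the paper cites \cite[Theorem 1.1]{Ou25} for ``$K_{\widetilde{X}}$ psef $\Rightarrow$ $\widetilde{X}$ non-uniruled'', while you prove this easy direction by hand, using free rational curves and nonnegativity of a psef class on curves through a very general point; that argument is fine.
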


\begin{proof}
(1) This follows from the same argument as in \cite[Corollary 6.4]{Miy87}. Assume, for contradiction, that $\mu^{\min}_{\gamma}(\Omega_X^{[1]}) < 0$. From $\mu^{\max}_{\gamma}(\mathcal{T}_X) > 0$,  there exists a maximal destabilizing subsheaf $\mathcal{E}_{\max} \subset \mathcal{T}_X$ with $\mu^{\min}_{\gamma}(\mathcal{E}_{\max}) > 0$.
This sheaf satisfies condition (2) in Proposition~\ref{prop-CP-foliation}, so $\mathcal{E}_{\max}$ defines an algebraic foliation with rationally connected leaves, contradicting the assumption that $X$ is non-uniruled.

(2) Under this assumption, $X$ is also non-uniruled. Indeed, let $\pi: \widetilde{X} \to X$ be a resolution where $\widetilde{X}$ is a compact K\"ahler manifold. Since $X$ has canonical singularities, we can write $K_{\widetilde{X}} = \pi^{*}K_X + E$ for some effective $\pi$-exceptional divisor $E$. Hence $K_{\widetilde{X}}$ is psef, and by \cite[Theorem 1.1]{Ou25}, $\widetilde{X}$ is non-uniruled. It follows that $X$ is non-uniruled as well.
\end{proof}

Note that even if $X$ is klt and $K_X$ is psef, $X$ is not necessarily non-uniruled (see, for example, the Ueno--Campana threefold in \cite{Cam11}). Thus, we do not know whether the generic nefness theorem holds when $X$ is klt and $K_X$ is psef.

Next, we consider the case where \( -K_X \) is nef.

\begin{prop}[{cf.~\cite[Theorem 1.4]{Ou17}, \cite[Theorem 5.2]{MWWZ25}}]
\label{prop-genericnef--antinef}
Under Setup~\ref{setup-genericnef}, if $-K_X$ is nef and $X$ is klt, then $\mu^{\min}_{\gamma}(\mathcal{T}_X) \ge 0$.
\end{prop}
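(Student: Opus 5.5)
We argue by contradiction, following the strategy of \cite[Theorem 1.4]{Ou17}. Suppose $\mu^{\min}_{\gamma}(\mathcal{T}_X) < 0$. Consider the Harder--Narasimhan filtration of $\mathcal{T}_X$ with respect to $\gamma$:
\[
0=\mathcal{E}_0\subsetneq \mathcal{E}_1\subsetneq\cdots\subsetneq\mathcal{E}_k=\mathcal{T}_X .
\]
Its graded pieces have strictly decreasing slopes, and the last one has slope $\mu^{\min}_{\gamma}(\mathcal{T}_X)<0$. Since $-K_X$ is nef and the $\alpha_i$ are nef, we have $\mu_{\gamma}(\mathcal{T}_X)= c_1(-K_X)\cdot\gamma/n \ge 0$. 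Let $i$ be the largest index such that the $i$-th graded piece has positive slope, and set $\mathcal{F}:=\mathcal{E}_i$. If no graded piece had positive slope, then all slopes would be $\le 0$ and the last one negative, forcing $\mu_\gamma(\mathcal{T}_X)<0$, a contradiction. Hence $i\ge 1$. Moreover $\mathcal{F}\neq\mathcal{T}_X$, because the last graded piece has negative slope. By construction $\mu^{\min}_\gamma(\mathcal{F})>0$ and $\mu^{\max}_\gamma(\mathcal{T}_X/\mathcal{F})\le 0$, so condition (1) of Proposition~\ref{prop-CP-foliation} holds. Therefore $\mathcal{F}$ is an algebraic foliation with rationally connected leaves.

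Next we pass to the associated rationally connected fibration. Since the leaves of $\mathcal{F}$ are algebraic and rationally connected, there is an almost holomorphic meromorphic map $f\colon X\dashrightarrow Y$ whose general fibres are the closures of the leaves, and $\mathcal{F}$ is the relative tangent sheaf of $f$. We take a resolution $\pi\colon\widetilde X\to X$ resolving $f$ to a morphism $\widetilde f\colon\widetilde X\to Y$ onto a compact K\"ahler manifold (after desingularising $Y$). The quotient $\mathcal{T}_X/\mathcal{F}$ is generically $f^*\mathcal{T}_Y$, so $c_1(\mathcal{T}_X/\mathcal{F})\cdot\gamma$ can be compared with $-\pi^*K_{X}\cdot\pi^*\gamma$ minus the relative canonical contribution. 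The aim is to prove
\[
c_1(\mathcal{T}_X/\mathcal{F})\cdot\gamma\;\ge\;0,
\]
or more precisely to exhibit a destabilizing quotient of nonnegative slope. This would contradict the fact that every graded piece of $\mathcal{T}_X/\mathcal{F}$ has slope $\le 0$ and the last one has slope $<0$. Equivalently, we want to show $c_1(\mathcal{F})\cdot\gamma\le c_1(-K_X)\cdot\gamma$ with a strict inequality leading to a contradiction.

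The main obstacle is this positivity step. It is where the nefness of $-K_X$ and the klt hypothesis must be used. The natural tool is the klt analogue of the statement that, for a rationally connected fibration, $-K_{X/Y}$ restricted to the movable class $\pi^*\gamma$ cannot dominate $-K_X$. More precisely: since $-K_X$ is nef and $X$ is klt, the pair structure gives that $-K_X-(\text{foliation canonical part})$ is pseudoeffective along $f$. This is the positivity of direct images, or equivalently the weak positivity argument of \cite[Theorem 5.2]{MWWZ25}, which uses that $\det(\mathcal{T}_X/\mathcal{F})$ is $f^*K_Y^{-1}$ up to $\pi$-exceptional and vertical divisors.

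I would first try to adapt Ou's argument. Apply the canonical bundle formula / pseudoeffectivity of $K_{\widetilde X/Y}+(\text{nef})$ on $\widetilde X$, i.e.\ the K\"ahler version of Campana--P\u{a}un's positivity for relative anticanonical divisors of rationally connected fibrations with nef $-K$. Then pair this with the movable class $\pi^*\gamma$; the pairing is nonnegative against psef classes, as noted in the proof of Proposition~\ref{prop-CP-foliation}. Exceptional divisors are handled by \eqref{eq-pullback-intersection}, and the klt discrepancies keep their coefficients $>-1$. Together these yield the desired contradiction.
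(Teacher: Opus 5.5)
Your construction of $\mathcal{F}$ is correct, and more explicit than the paper's, which only cites \cite[Theorem 1.4]{Ou17}. Truncating the Harder--Narasimhan filtration after the last piece of positive slope gives $\mu^{\min}_\gamma(\mathcal F)>0\ge\mu^{\max}_\gamma(\mathcal T_X/\mathcal F)$ and $\mu_\gamma(\mathcal T_X/\mathcal F)<0$, so Proposition~\ref{prop-CP-foliation}(1) applies. The rest of your route is also the paper's: rationally connected fibration, a Campana--P\u{a}un positivity statement on $\widetilde X$, then pairing with the movable class $\pi^*\gamma$.

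\textbf{Gap 1: the ramification divisor.} The decisive positivity step is left as an ``obstacle'', and the form you state, ``$K_{\widetilde X/Y}+(\text{nef})$ is psef'', would not close the argument. After modifying $\widetilde X$ and $Y$ as in \cite[Lemma 5.12]{CP19}, one has $K_{\widetilde{\mathcal F}}\sim K_{\widetilde X/Y}-\mathrm{Ram}(\widetilde f)+E$ with $E$ $\pi$-exceptional. So the ``vertical divisors'' you mention are exactly $-\mathrm{Ram}(\widetilde f)$. Since $\mathrm{Ram}(\widetilde f)$ is effective, $\mathrm{Ram}(\widetilde f)\cdot\pi^*\gamma\ge 0$ enters with the wrong sign. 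Psef-ness of $K_{\widetilde X/Y}-\pi^*K_X+(\text{exceptional})$ therefore gives no lower bound on $(K_{\widetilde{\mathcal F}}-\pi^*K_X)\cdot\pi^*\gamma=\rk(\mathcal T_X/\mathcal F)\,\mu_\gamma(\mathcal T_X/\mathcal F)$. You need the sharper statement with the ramification divisor subtracted: $K_{\widetilde X/Y}+M+L-\mathrm{Ram}(\widetilde f)$ is psef (Lemma~\ref{lem-CP-nefcase}).

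\textbf{Gap 2: how klt enters.} You do not say how the klt hypothesis feeds into that statement, and with $M=-\pi^*K_X$ alone its fibrewise hypothesis can fail. Write $K_{\widetilde X}+\Delta\sim\pi^*K_X+G$ with $G,\Delta$ effective $\pi$-exceptional and $(\widetilde X,\Delta)$ klt. Then $(K_{\widetilde X}-\pi^*K_X)|_{X_y}\sim(G-\Delta)|_{X_y}$, which need not be psef. The paper takes $L=\Delta$, whose singular metric has trivial multiplier ideal because the pair is klt; this is the actual role of the discrepancies being $>-1$. Then $(K_{\widetilde X}+M+L)|_{X_y}\sim G|_{X_y}$ is effective. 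In the K\"ahler setting you must also check the gluing hypothesis (3) of Lemma~\ref{lem-CP-nefcase}, which holds because the fibres are rationally connected.

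With these in place, $K_{\widetilde{\mathcal F}}-\pi^*K_X+(\Delta-E)$ is psef. Pairing with $\pi^*\gamma$ kills the exceptional part and gives $(K_{\widetilde{\mathcal F}}-\pi^*K_X)\cdot\pi^*\gamma\ge 0$, which is the contradiction. Naming tools in general terms (canonical bundle formula, positivity of direct images, \cite[Theorem 5.2]{MWWZ25}, which is stated for compact K\"ahler manifolds) does not substitute for this precise statement and the verification of its hypotheses.
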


\begin{proof}

The proof follows \cite[Theorem 1.4]{Ou17} with slight modifications. Assume, for contradiction, that $\mu_{\gamma}^{\min}(\mathcal{T}_X) < 0$. Then, by the argument of \cite[Theorem 1.4]{Ou17}, there exists a saturated subsheaf $\mathcal{F} \subset \mathcal{T}_X$ satisfying condition (1) in Proposition~\ref{prop-CP-foliation} such that
\begin{equation}
\label{eq-negative-beta}
\mu_{\gamma}(\mathcal{T}_X / \mathcal{F}) < 0.
\end{equation}
Therefore, by Proposition~\ref{prop-CP-foliation}, $\mathcal{F}$ defines an algebraic foliation with rationally connected leaves and induces a meromorphic map $f: X \dashrightarrow Y$ whose general fiber is rationally connected. Let $\pi : \widetilde{X} \to X$ be a resolution and $\widetilde{f} : \widetilde{X} \to Y$ a morphism resolving $f$. By taking a Stein factorization, we may assume that $\widetilde{f}$ has connected fibers. Since $X$ lies in Fujiki's class, both $\widetilde{X}$ and $Y$ can be assumed to be K\"ahler.
$$
\xymatrix@C=40pt@R=30pt{
& \widetilde{X} \ar[ld]_{\pi} \ar[rd]^{\widetilde{f}} & \\
X \ar@{-->}[rr]^f && Y
}
$$
As in \cite[Lemma 5.12]{CP19} (see also \cite[Theorem 5.1]{CP25} and \cite[Proposition 1.4]{Cla16}), possibly after taking modifications of $\widetilde{X}$ and $Y$, the foliation $\widetilde{\mathcal{F}}$ induced by $\widetilde{f}$ coincides with $\pi^{[*]} \mathcal{F}$ outside the $\pi$-exceptional locus, and
\begin{equation}
\label{eq-foliation}
K_{\widetilde{\mathcal{F}}} \sim K_{\widetilde{X}/Y} - \mathrm{Ram}(\widetilde{f}) + E,
\end{equation}
holds for some $\pi$-exceptional divisor $E$. Here, $K_{\widetilde{\mathcal{F}}}$ denotes the Cartier divisor associated with $\det(\widetilde{\mathcal{F}}^{\vee})$, and $\mathrm{Ram}(\widetilde{f})$ denotes the ramification divisor of $\widetilde{f}$, whose precise definition will be given later in Lemma~\ref{lem-CP-nefcase}. 
Thus we obtain
\begin{equation}
\label{eq-tangent-nef-1}
\left(K_{\widetilde{\mathcal{F}}} - \pi^{*}K_X\right) \cdot
\pi^{*}\gamma
\underalign{\eqref{eq-pullback-intersection}}{=}
(c_1(-K_X) - c_1(\mathcal{F})) \cdot \gamma 
=\rk(\mathcal{T}_X/\mathcal{F}) \cdot \mu_{\gamma}(\mathcal{T}_X / \mathcal{F})
\underalign{(\ref{eq-negative-beta})}{<} 0.
\end{equation}

Since $X$ is klt, there exist $\pi$-exceptional effective divisors $G$ and $\Delta$ such that $(\widetilde{X}, \Delta)$ is klt and
$$
K_{\widetilde{X}} + \Delta \sim \pi^{*}K_X + G.
$$
We now verify the following:
\begin{itemize}
\item The divisor $-\pi^{*}K_X$ is nef, and $\Delta$ is psef. Since $(\widetilde{X}, \Delta)$ is klt, the $\Q$-line bundle $\mathcal{O}_{\widetilde{X}}(\Delta)$ admits a singular Hermitian metric $h_{\Delta}$ with trivial multiplier ideal sheaf.
\item The restriction $(K_{\widetilde{X}} - \pi^{*}K_X + \Delta)|_{X_y} \sim G|_{X_y}$ is psef for general $y \in Y$, since $G$ is effective.
\end{itemize}
Therefore, the assumptions of Lemma~\ref{lem-CP-nefcase} are satisfied, and we conclude that
$$
K_{\widetilde{X}/Y} - \pi^{*}K_X + \Delta - \mathrm{Ram}(\widetilde{f}) 
\underalign{\eqref{eq-foliation}}{\sim}
K_{\widetilde{\mathcal{F}}} - \pi^{*}K_X + (\Delta - E)
$$
is psef. Since $\Delta - E$ is $\pi$-exceptional, it follows that
$$
(K_{\widetilde{\mathcal{F}}} - \pi^{*}K_X) \cdot \pi^{*}\gamma
\quad \underalign{\text{\cite[Lemma 2.2]{Ou25b}}}{=} \quad 
\left( K_{\widetilde{\mathcal{F}}} - \pi^{*}K_X + \Delta - E \right) \cdot \pi^{*}\gamma\underalign{\text{($\alpha_i$ nef)}}{\ge} 
0,
$$
which contradicts \eqref{eq-tangent-nef-1}.
\end{proof}

\begin{lem}[{cf.~\cite[Theorem 1.2]{CP25}}]
\label{lem-CP-nefcase}
Let $f: X \to Y$ be a holomorphic surjective map between compact K\"ahler manifolds. Let $M$ be a nef divisor and $L$ be a psef divisor on $X$. Assume the following:
\begin{enumerate}[label=$(\arabic*)$]
    \item The line bundle $\mathcal{O}_{X}(L)$ admits a singular Hermitian metric $h_L$ with semipositive curvature current, and the multiplier ideal sheaf of $h_L$ is trivial.
    \item The restriction $(K_X + M + L)|_{X_y}$ is psef for a general $y \in Y$.
    \item There exist a K\"ahler form $\omega$ and a holomorphic $2$-form $\sigma$ on $X$ such that $\omega + \sigma + \overline{\sigma}$ represents a rational cohomology class in $H^2(X, \Q)$ and $\sigma|_{X_y} = 0$ for a general $y \in Y$.
\end{enumerate}
Then $K_{X/Y} + M + L - \mathrm{Ram}(f)$ is psef.
Here, the ramification divisor $\mathrm{Ram}(f)$ is defined by
$$
\mathrm{Ram}(f) := \sum_{W \subset X}(m_W - 1)W,
$$
where the sum is taken over irreducible hypersurfaces $W$ in $X$ such that $Z = f(W)$ is a hypersurface in $Y$, and $m_W$ is the multiplicity of $f^{-1}(Z)$ along $W$.
\end{lem}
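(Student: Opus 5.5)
The statement is a K\"ahler version of \cite[Theorem 1.2]{CP25}: that theorem concerns the projective (or rational-class) situation, with $L$ carrying a metric with trivial multiplier ideal and with $K_X+L$ psef on general fibres. My plan is to reduce to it by absorbing the nef divisor $M$ into the metric data. I will follow the Cao--P\u{a}un strategy. Show positivity of $K_{X/Y}+M+L$ via Bergman-kernel (relative $L^2$) metrics on twisted relative pluricanonical bundles, then remove the ramification divisor by passing to a semistable-type model.

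First I fix a K\"ahler form $\omega_X$ and $\varepsilon>0$. Since $M$ is nef, $M$ carries smooth metrics $h_{M,\varepsilon}$ with curvature $\geq -\varepsilon\omega_X$. Then $M+L$ carries the metric $h_{M,\varepsilon}h_L$, whose multiplier ideal is still trivial (the $M$ factor is smooth) and whose curvature is $\geq -\varepsilon\omega_X$. The role of hypothesis $(3)$ is the same as in \cite{CP25}. After adding $\varepsilon$ times the closed form $\omega+\sigma+\overline{\sigma}$, which is fibrewise K\"ahler since $\sigma|_{X_y}=0$, one gets a fibrewise-positive correction lying in a rational class. Replacing it by a small rational multiple, one can treat the twisted bundle $K_{X/Y}+M+L+\varepsilon A$ as if $A$ were a genuine line bundle class. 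Hypothesis $(2)$ together with $A$ being fibrewise positive makes $(K_X+M+L+\varepsilon A)|_{X_y}$ big on general fibres. This yields nonzero twisted sections, so the relative $m$-Bergman kernel metric is well defined.

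Next I apply the Cao--P\u{a}un positivity of relative Bergman kernel metrics (the Ohsawa--Takegoshi extension with optimal constant) to
\[ m(K_{X/Y})+(m-1)(M+\varepsilon A)+L, \]
obtaining a psef class for every $m$. Dividing by $m$ and letting $m\to\infty$ then $\varepsilon\to 0$ (psef classes being closed) gives $K_{X/Y}+M+L$ psef. Here the triviality of the multiplier ideal of $h_L$ is what allows $L$ to appear with coefficient one rather than $(m-1)/m$.

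To subtract $\mathrm{Ram}(f)$, I would follow \cite[Theorem 5.1]{CP25}. After a modification of $Y$ and a base change by a cyclic cover totally ramified along the branch divisors (a Kawamata cover), the pulled-back family has reduced fibres in codimension one. The relative canonical bundle of the new family compares to the pull-back of $K_{X/Y}-\mathrm{Ram}(f)$ up to an exceptional correction. Applying the previous step on the cover and pushing forward/norming, which preserves pseudo-effectivity, gives the claim. The main obstacle is precisely this step in the non-algebraic K\"ahler setting. Here the branched covers and the rationality of $\omega+\sigma+\overline\sigma$ must be combined, so I expect to rely on the fact that \cite{CP25} already proves its Theorem 1.2 under hypothesis $(3)$. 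What I would need to verify is only that the nef $M$ passes through their estimates with an $\varepsilon$-loss.
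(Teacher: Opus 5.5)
Your closing plan (rerun the proof of \cite[Theorem 3.1]{CP25} and check that the nef divisor $M$ survives with an $\varepsilon$-loss) is what the paper does. However, two of the steps you spell out are wrong, and your ramification step is a genuine gap.

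\emph{The twist.} The $m$-Bergman metric on $mK_{X/Y}+(m-1)(M+\varepsilon A)+L$, divided by $m$, tends to $K_{X/Y}+M+\varepsilon A$, so $L$ drops out. Moreover, $(2)$ does not give nonvanishing of $mK_{X_y}+(m-1)(M+\varepsilon A)+L$: take $X_y=\mathbb{P}^1$, $M=0$, $L|_{X_y}=\mathcal{O}(2)$. You need $m(K_{X/Y}+M+L+\varepsilon A)$ with fibre norm $\int_{X_y}|u|^{2/m}h_{M,\varepsilon}h_Lh_A$. Finiteness of this norm on every section is where $\mathcal{I}(h_L)=\mathcal{O}_X$ is used.

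\emph{The class $A$.} $A$ cannot be treated as a line bundle on $X$. If $\sigma\neq 0$, the class of $\omega+\sigma+\overline{\sigma}$ is not of type $(1,1)$, so no multiple of it is $c_1$ of a holomorphic line bundle. The paper works locally instead:
\begin{enumerate}[label=$(\roman*)$]
\item Over each coordinate ball $U_\lambda\subset Y$, take an ample line bundle $\mathcal{H}_{U_\lambda}$ on $f^{-1}(U_\lambda)$ with a metric of curvature $\omega$.
\item Choose $h_{M,m_0}$ with $\sqrt{-1}\Theta_{h_{M,m_0}}\ge-\frac{1}{m_0}\omega$ for the \emph{same} $\omega$. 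Fibrewise positivity cannot cancel negativity in horizontal directions, so your auxiliary $\omega_X$ must be the $\omega$ of $(3)$.
\item Obtain a semipositive metric on $K_{X_\lambda/U_\lambda}+M+L+\frac{1}{m_0}\mathcal{H}_{U_\lambda}$ from \cite[Lemma 3.3, Theorem 3.4]{CP25}.
\item Use $\sigma|_{X_y}=0$ (Step 2 of \cite[Theorem 3.1]{CP25}) to see that these local metrics agree on overlaps. This gives a positive current in $c_1(K_{X/Y}+M+L)+\frac{1}{m_0}\{\omega\}$ on a Zariski open subset of $X$.
\end{enumerate}

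\emph{The ramification step.} No base change is needed. By Steps 3--4 of \cite[Theorem 3.1]{CP25}, the current above extends to a closed positive current $\Xi$ on $X$ with $\Xi\ge[\mathrm{Ram}(f)]$. This is a local estimate near a general point of a component $W$ of $f^{-1}(Z)$: for a local equation $y_1$ of $Z$, $f^*dy_1$ vanishes to order $m_W-1$ along $W$. Since $M$ enters only through the smooth metric $h_{M,m_0}$, this step is unaffected, and letting $m_0\to\infty$ finishes the proof.

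Your Kawamata-cover route would instead need a global cyclic cover of a possibly non-projective compact K\"ahler $Y$ with prescribed branching. Kawamata's construction uses very ample divisors, so it is not available here. You would also have to re-verify $(1)$ and $(3)$ on a resolution of the normalized fibre product. A ramified pull-back of $h_L$ can acquire a nontrivial multiplier ideal, and $\omega$ pulls back only to a semipositive form. So this step does not go through as stated; the missing idea that replaces it is the estimate $\Xi\ge[\mathrm{Ram}(f)]$ for the extended current.
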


Note that, according to the argument in \cite[Theorem 5.1]{CP25}, if the general fiber $F$ of $f$ is rationally connected as in the above lemma, then assumption (3) is satisfied.

\begin{proof}
The proof of \cite[Theorem 3.1]{CP25} applies here with only minor modifications. We outline the necessary steps.

By \cite[Theorem 3.1, Step 1]{CP25}, there exists a finite open cover $\{ U_{\lambda} \}$ of $Y$ by coordinate charts, and for each $U_{\lambda}$, there exists an ample line bundle $\mathcal{H}_{U_{\lambda}}$ on $f^{-1}(U_{\lambda})$, together with a smooth positive metric $h_{U_{\lambda}}$ whose local weight is a potential of $\omega|_{f^{-1}(U_{\lambda})}$. Let $X_{\lambda} := f^{-1}(U_{\lambda})$. Fix a positive integer $m_0$. Since $M$ is nef, there exists a smooth metric $h_{M, m_0}$ on the line bundle $\mathcal{O}_{X}(M)$ such that
$$
\sqrt{-1} \Theta_{h_{M, m_0}} \ge - \frac{1}{m_0} \omega.
$$
Then, by \cite[Lemma 3.3 and Theorem 3.4]{CP25}, there exists a singular Hermitian metric $h_{U_{\lambda},m_0}$ with semipositive curvature current on the $\mathbb{Q}$-line bundle
$$
\mathcal{O}_{X}(K_{X_{\lambda}/U_{\lambda}} + M + L) \otimes  \mathcal{H}_{U_{\lambda}}^{\otimes \frac{1}{m_0}}.
$$
According to \cite[Theorem 3.1, Step 2]{CP25}, assumption (3) implies that the metrics $h_{U_{\lambda},m_0}$ and $h_{U_{\mu}, m_0}$ agree on overlaps. As a result, the class
$$
c_1(K_{X/Y} + M + L) + \frac{1}{m_0} \{\omega\}
$$
admits a closed positive current $\Xi_0$ on a Zariski open subset of $X$. Then, following the argument in \cite[Theorem 3.1, Steps 3 and 4]{CP25}, this current extends to a positive current $\Xi$ on $X$ satisfying $\Xi \ge [\mathrm{Ram}(f)]$. This implies that
$$
c_1(K_{X/Y} + M + L - \mathrm{Ram}(f)) + \frac{1}{m_0} \{\omega\}
$$
is psef. Letting $m_0 \to \infty$, we obtain the desired conclusion.
\end{proof}
\begin{rem}
During the preparation of an earlier version of this paper, which appeared in a previous arXiv version of \cite{IJZ25} (see also Remark~\ref{rem-earlier}), a result similar to Proposition~\ref{prop-genericnef--antinef} was independently established in \cite[Theorem~5.2]{MWWZ25} under the assumption that $X$ is a compact K\"ahler manifold. Subsequently, after \cite{IJZ25} was posted on arXiv, Proposition~\ref{prop-genericnef--antinef} was extended in \cite[Theorem~3.5]{FGSW26} to the case where $(X, D)$ is a log canonical pair.

The results in Subsection~\ref{subsec-generic-neffness-thm} remain valid if the assumption on $\gamma$ in Setup~\ref{setup-genericnef} is replaced by
$
\gamma = \langle \alpha^{n-1} \rangle,
$
where $\alpha \in H^{1,1}_{\mathrm{BC}}(X)$ is a big class satisfying the vanishing property.
(See also the previous arXiv version of \cite{IJZ25}.)
\end{rem}

\subsection{Bogomolov--Gieseker inequality for mixed polarizations}
Next, we prove the Bogomolov--Gieseker inequality for reflexive sheaves with respect to mixed polarizations.
We need the following orbifold result.

\begin{lem}[{cf.~\cite[Corollary 3.7]{CW24}}]
\label{lem-orbifold-version-CW24}
Let $X_{\mathrm{orb}}$ be a compact K\"ahler orbifold and $\omega_{1, \orb}, \ldots, \omega_{n-1,\orb}$ be orbifold K\"ahler forms. Let $(E_{\mathrm{orb}}, \theta_{\mathrm{orb}})$ be a rank $r$ Higgs orbi-bundle on $X_{\mathrm{orb}}$. If $(E_{\mathrm{orb}}, \theta_{\mathrm{orb}})$ is $\{\omega_{1,\orb}\} \cdots \{\omega_{n-1,\orb}\}$-stable, then the following Bogomolov--Gieseker inequality holds:
\[
\left( 2r\, c_2^{\mathrm{orb}}(E_{\mathrm{orb}}) - (r-1)\, c_1^{\mathrm{orb}}(E_{\mathrm{orb}})^2 \right) \cdot \{\omega_{2,\orb}\} \cdots \{\omega_{n-1,\orb}\} \ge 0.
\]
\end{lem}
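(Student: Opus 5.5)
The plan is to transplant the proof of \cite[Corollary 3.7]{CW24} to the orbifold setting, using the existence of Hermitian--Yang--Mills type metrics for the mixed polarization. The first step is to fix the Gauduchon-type reference form. By Michelsohn's trick, there is a unique (up to scaling) orbifold Hermitian metric $\omega_{\orb}$ with $\omega_{\orb}^{n-1} = \omega_{1,\orb}\wedge\cdots\wedge\omega_{n-1,\orb}$ pointwise on every chart. Since all $\omega_{i,\orb}$ are closed, $\omega_{\orb}^{n-1}$ is closed, so $\omega_{\orb}$ is balanced and in particular Gauduchon. Its degree functional is $\deg(\mathcal{F}) = c_1(\mathcal{F})\cdot\{\omega_{1,\orb}\}\cdots\{\omega_{n-1,\orb}\}$. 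Hence $\{\omega_{1,\orb}\}\cdots\{\omega_{n-1,\orb}\}$-stability of $(E_{\orb},\theta_{\orb})$ coincides with $\omega_{\orb}$-stability. All constructions are done $G_i$-equivariantly on the charts $U_i$, and they glue by the compatibility conditions.

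The second step is the Kobayashi--Hitchin correspondence for stable Higgs orbi-bundles over a compact Gauduchon orbifold. I would follow Simpson's heat-flow / Uhlenbeck--Yau continuity method in the version of Lübke--Teleman for Gauduchon metrics. Everything lifts to $G_i$-invariant local data, and the analysis (maximum principle, $L^p$ estimates, Uhlenbeck weak compactness of a destabilizing limit) is local and unaffected by finite quotients. Two global inputs are needed, and both should be checked to persist: integration by parts, where the orbifold integral is $\sum_i \frac{1}{|G_i|}\int_{U_i}$ with a partition of unity, and the regularity of weakly holomorphic subbundles into coherent reflexive orbi-subsheaves. The output is an orbifold Hermitian metric $h_{\orb}$ with $\sqrt{-1}\Lambda_{\omega}F_{h,\theta} = \lambda\,\mathrm{Id}$, where $F_{h,\theta}$ is the Hitchin--Simpson curvature.

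The third step is the pointwise inequality. We have $(2r\,c_2 - (r-1)c_1^2)$ represented by $\frac{1}{4\pi^2}\mathrm{tr}(F^{\perp}\wedge F^{\perp})$ up to sign conventions, where $F^\perp$ is the trace-free part of $F_{h,\theta}$; the Higgs terms contribute no extra cohomology since $\theta\wedge\theta=0$ and Chern--Weil forms of $D_{h,\theta}$ are cohomologous to those of the Chern connection. We wedge with $\omega_{2,\orb}\wedge\cdots\wedge\omega_{n-1,\orb}$. Following \cite{CW24}, the key algebraic lemma says the following: if a $(1,1)$-form valued endomorphism $A$ satisfies $A\wedge\omega_1\wedge\cdots\wedge\omega_{n-1}=0$ (the trace-free HYM condition, using $\omega^{n-1}=\prod\omega_i$), then $\mathrm{tr}(A\wedge A^*)\wedge\omega_2\wedge\cdots\wedge\omega_{n-1}$ has a sign. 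This is a mixed Hodge--Riemann bilinear relation for primitive $(1,1)$-forms with respect to $n-2$ Kähler forms, which holds pointwise (Timorin / Dinh--Nguyên). We also need the $(2,0)+(0,2)$ parts coming from $\theta$, handled as in Simpson. Integrating over $X_{\orb}$ and using Satake's de Rham theorem \cite{Sat56} to identify the result with the cohomological intersection gives the claimed inequality.

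I expect the main obstacle to be the second step, the orbifold Higgs Kobayashi--Hitchin theorem with a non-Kähler Gauduchon metric. The pointwise algebra is purely local and is imported from \cite{CW24}. If a full equivariant rewrite is too heavy, I would first try an alternative: reduce to the smooth case by passing to a global finite cover. This is not available in general, so I would instead cite existing orbifold HYM results (e.g.\ Eyssidieux--Sala, or the orbifold Simpson correspondence used in \cite{ZZZ25}) and verify only the balanced-metric modification.
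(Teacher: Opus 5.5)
Your proposal is correct and follows the paper's proof. The paper likewise passes to the balanced (hence Gauduchon) orbifold metric with $\omega_{\orb}^{n-1}/(n-1)!=\omega_{1,\orb}\wedge\cdots\wedge\omega_{n-1,\orb}$ as in \cite[Corollary~2.4]{CW24}, takes the Hermite--Einstein metric for the stable Higgs orbi-bundle from \cite[Theorem~3.3]{ZZZ25} (the citation you fall back on), and concludes by the pointwise Hodge--Riemann argument of \cite[Corollary~3.4]{CW24}. The only difference is that the paper, following \cite[Theorem~3.2]{CW24}, writes the equation as $\sqrt{-1}F_{h_{\orb},\theta_{\orb}}\wedge\omega_{1,\orb}\wedge\cdots\wedge\omega_{n-1,\orb}=\lambda\,\mathrm{Id}\cdot\omega_{1,\orb}^2\wedge\omega_{2,\orb}\wedge\cdots\wedge\omega_{n-1,\orb}$; this yields the same primitivity of the trace-free curvature that your argument uses, so your route is equivalent for the inequality. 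One small correction: the $(n-1)$-st root $\omega_{\orb}$ is unique outright, not just up to scaling, and this uniqueness is exactly why it is $G_i$-invariant and glues across charts.
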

For the definition of Higgs sheaves, we refer to \cite[Subsubsection 2.2.3]{ZZZ25} in the orbifold setting and to \cite[Subsubsection 3.2.2]{IJZ25} in the normal analytic setting. Since the definition of Higgs bundles is not essential in this paper, we omit it.

\begin{proof}
Since $(E_{\mathrm{orb}}, \theta_{\mathrm{orb}})$ is $\{\omega_{1,\orb}\} \cdots \{\omega_{n-1,\orb}\}$-stable, there exists an orbifold Hermitian metric $h_{\mathrm{orb}}$ satisfying the Hermite--Einstein equation:
\begin{equation}
\label{eq-CW24type-HE}
\sqrt{-1} F_{h_\mathrm{orb}, \theta_{\orb}} \wedge \omega_{1,\orb} \wedge \cdots \wedge \omega_{n-1,\orb} = \lambda \, \mathrm{Id}_{E_{\orb}} \cdot \omega_{1,\orb}^2 \wedge \omega_{2,\orb} \wedge \cdots \wedge \omega_{n-1,\orb},
\end{equation}
for some constant $\lambda \in \mathbb{R}$. Here, $F_{h_\mathrm{orb}, \theta_{\orb}}$ denotes the curvature of the Hitchin--Simpson connection associated with $(h_\mathrm{orb}, \theta_{\orb})$ (see also \cite[Definition 3.2]{ZZZ25}).
Indeed, as in \cite[Corollary 2.4]{CW24}, one can choose a positive orbifold $(1,1)$-form $\omega_{\mathrm{orb}}$ such that
\[
\frac{\omega_{\mathrm{orb}}^{n-1}}{(n-1)!} = \omega_{1,\orb} \wedge \cdots \wedge \omega_{n-1,\orb}.
\]
Note that $\omega_{\mathrm{orb}}$ is a balanced metric, and in particular, a Gauduchon metric. According to \cite[Theorem 3.3]{ZZZ25}, a Hermite--Einstein metric exists for Higgs orbi-bundles with respect to such metrics. Applying the same argument as in \cite[Theorem 3.2]{CW24} to the orbifold setting, we obtain a metric satisfying \eqref{eq-CW24type-HE}.

Once the existence of a Hermite--Einstein metric satisfying \eqref{eq-CW24type-HE} is established, the Bogomolov--Gieseker inequality follows by repeating the argument of \cite[Corollary 3.4]{CW24} in the orbifold context.
\end{proof}

\begin{cor}
\label{cor-BGinequality-mixed}
Let $X$ be a compact normal analytic variety in Fujiki's class with quotient singularities in codimension~$2$, and let $\mathcal{E}$ be a reflexive sheaf on $X$. Let $\beta \in H^{1,1}_{BC}(X)$ be a nef class, and let $\alpha_1, \ldots, \alpha_{n-2} \in H^{1,1}_{BC}(X)$ be nef and big classes such that $\beta \cdot \alpha_1 \cdots \alpha_{n-2} \not\equiv 0$. Assume that one of the following conditions holds:
\begin{enumerate}[label=$(\arabic*)$]
\item $\mathcal{E}$ is $\beta \cdot\alpha_1 \cdots \alpha_{n-2}$-semistable.
\item $X$ has rational singularities, and there exists a Higgs field $\theta$ such that the Higgs sheaf $(\mathcal{E}, \theta)$ is $\beta \cdot\alpha_1 \cdots \alpha_{n-2}$-semistable.
\end{enumerate}
Then the Bogomolov--Gieseker inequality holds:
$$
\widehat{\Delta}(\mathcal{E}) \cdot \alpha_1 \cdots \alpha_{n-2} \ge 0.
$$
\end{cor}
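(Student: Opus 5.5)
We plan to reduce to Lemma~\ref{lem-orbifold-version-CW24} on an orbifold modification, after perturbing the classes to Kähler ones and handling semistability by a Jordan--Hölder argument.

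\textbf{Step 1: reduction to an orbifold.} Let $f\colon Y\to X$ be the orbifold modification of \cite[Theorem 1.2]{Ou24} and \cite[Theorem 1]{KO25}. Here $Y$ carries a standard orbifold structure $Y_{\orb}$, and $f$ is an isomorphism over $X^\circ$ with $\codim(X\setminus X^\circ)\ge 3$. Since $X$ is in Fujiki's class, after a further orbifold-compatible modification (functorial resolution of the charts, as in Subsection~\ref{subsec-intersection-orbifold-nonpluripolar}) we may take $Y_{\orb}$ to be a Kähler orbifold. Set $\mathcal{E}_{\orb}:=\{f_i^{[*]}\mathcal{E}\}$. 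By \cite[Lemma 4.9]{ZZZ25} and \cite[Lemma 2.2]{Ou25b},
\[
\widehat{\Delta}(\mathcal{E})\cdot\alpha_1\cdots\alpha_{n-2}=\Delta^{\orb}(\mathcal{E}_{\orb})\cdot f^*\alpha_1\cdots f^*\alpha_{n-2}.
\]
By \eqref{eq-pullback-intersection}, slopes with respect to $\beta\cdot\alpha_1\cdots\alpha_{n-2}$ are preserved under $f^{[*]}$, so semistability is preserved. In case (2) we also need the Higgs field to extend as an orbifold Higgs field $\theta_{\orb}$ on $\mathcal{E}_{\orb}$ with values in $\Omega^{1}_{Y_{\orb}}$. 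This is where rational singularities enter: reflexive differentials extend over resolutions (Kebekus--Schnell type extension), as in \cite[Subsubsection 3.2.2]{IJZ25}.

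\textbf{Step 2: perturbation to Kähler classes and stability.} Fix an orbifold Kähler form $\omega$ on $Y_{\orb}$. For $\varepsilon>0$ consider $\omega_{1,\varepsilon}\in f^*\beta+\varepsilon\{\omega\}$ and $\omega_{j+1,\varepsilon}\in f^*\alpha_j+\varepsilon\{\omega\}$, all orbifold Kähler.

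\emph{Main obstacle.} Semistability with respect to the degenerate class $\gamma_0=f^*(\beta\alpha_1\cdots\alpha_{n-2})$ need not persist to $\gamma_\varepsilon$. We would handle this in two parts.

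\emph{Openness of stability.} We first take a Jordan--Hölder filtration of $\mathcal{E}_{\orb}$ with respect to $\gamma_0$ (existence by item (4) of the list after Setup~\ref{setup-genericnef}, in the orbifold/Higgs version). Its graded pieces $\mathcal{G}_k$ are $\gamma_0$-stable with equal slopes. The set of $\gamma_0$-destabilizing candidates has bounded degree, by the boundedness argument of \cite[Section 2]{GKP16a} and \cite{ZZZ25}. Hence each $\mathcal{G}_k$ stays $\gamma_\varepsilon$-stable for $\varepsilon$ small, although the $\varepsilon$-slopes may differ. Lemma~\ref{lem-orbifold-version-CW24} is stated for bundles, so for reflexive orbi-sheaves we pass to an orbifold resolution on which the sheaf is locally free. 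This is allowed because $\Delta$ is computed there, as in the definition of $\widehat{c}_2$ in Subsection~\ref{subsec-intersection-orbifold-nonpluripolar}. We then get
\[
\Delta(\mathcal{G}_k)\cdot\prod_j\{\omega_{j+1,\varepsilon}\}\ge 0,
\]
and letting $\varepsilon\to0$ gives $\Delta(\mathcal{G}_k)\cdot\Omega\ge0$, where $\Omega=f^*(\alpha_1\cdots\alpha_{n-2})$.

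\emph{Recombining the graded pieces.} We use the standard identity
\[
\frac{\Delta(\mathcal{E})}{r}\cdot\Omega\ \ge\ \sum_k\frac{\Delta(\mathcal{G}_k)}{r_k}\cdot\Omega-\sum_{k<l}\frac{r_kr_l}{r}\left(\frac{c_1(\mathcal{G}_k)}{r_k}-\frac{c_1(\mathcal{G}_l)}{r_l}\right)^2\cdot\Omega .
\]
Here the inequality comes from Proposition~\ref{prop-exact-sequence}, since torsion cokernels only increase $c_2$. Because all slopes with respect to $\gamma_0$ coincide, each $\xi=c_1(\mathcal{G}_k)/r_k-c_1(\mathcal{G}_l)/r_l$ satisfies $\xi\cdot f^*\beta\cdot\Omega=0$. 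Proposition~\ref{prop-Hodge-index} then gives $\xi^2\cdot\Omega\le0$: use case (1) if $\beta^2\Omega>0$, and case (2) otherwise. For case (2) we take $\alpha=\alpha_1$ and use $\beta\cdot\alpha_1\cdot\Omega>0$, which follows from $\beta\cdot\Omega\not\equiv0$ and Remark~\ref{rem-Hodge-index-num}. Hence every subtracted term is nonnegative in sign-corrected form, and we conclude $\widehat{\Delta}(\mathcal{E})\cdot\alpha_1\cdots\alpha_{n-2}\ge0$.

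The delicate points are the boundedness argument giving openness of stability under the perturbation $\varepsilon\to0$, and, in case (2), the extension of the Higgs field to the orbifold modification.
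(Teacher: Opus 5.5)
Your proposal is correct and follows essentially the same route as the paper. Both arguments pass to an orbifold modification, reduce to the stable case via a Jordan--H\"older filtration plus the Langer-type recombination (Lemma~\ref{lem-Langer-ineq}) and the Hodge index theorem (Proposition~\ref{prop-Hodge-index}), use openness of stability under a K\"ahler perturbation (which the paper simply cites as \cite[Corollary~6.10]{Toma19}), apply Lemma~\ref{lem-orbifold-version-CW24}, and let $\varepsilon\to0$. One adjustment is needed in the order of operations. You should first resolve each stable piece to an orbi-bundle and then perturb by an orbifold K\"ahler form on that resolution, as the paper does. The pullbacks of your K\"ahler forms from $Y$ are no longer K\"ahler on the resolution, so Lemma~\ref{lem-orbifold-version-CW24} does not apply to them directly.
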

In (2), the assumption that $X$ has rational singularities is needed in order to define the pullback of Higgs sheaves (see \cite[Definition 3.28]{IJZ25}).

\begin{proof}
We treat only case~(1), as case~(2) is similar. As in Subsection \ref{subsec-intersection-orbifold-nonpluripolar}, we may take a bimeromorphic morphism
\( q : Z \to X \) from a compact K\"ahler variety \( Z \) with only quotient singularities, such that \( Z \) admits a compact K\"ahler orbifold structure \( Z_{\mathrm{orb}} \), and there exists an orbi-bundle \( E_{\mathrm{orb}} \) that is semistable with respect to
$q^*\beta \cdot q^*\alpha_1 \cdots q^*\alpha_{n-2}$
and satisfies
\[
\widehat{\Delta}(\mathcal{E}) \cdot \alpha_1 \cdots \alpha_{n-2} = \widehat{\Delta}_{\mathrm{orb}}(E_{\mathrm{orb}}) \cdot q^*\alpha_1 \cdots q^*\alpha_{n-2}.
\]
Here, the orbifold Bogomolov discriminant $\widehat{\Delta}_{\mathrm{orb}}(E_{\mathrm{orb}})$ is defined analogously to Definition \ref{defn-Bogomolov-discriminant}.

As in \cite[Proposition~5.1]{Chen22} (cf.~\cite[Chapter~1, Subsection~6.c]{Nak04}, \cite[Equation (3.6.1)]{Lan04}), we consider the Jordan--Hölder filtration of \( E_{\mathrm{orb}} \) with respect to \( q^*\beta \cdot q^*\alpha_1 \cdots q^*\alpha_{n-2}\), and apply the Hodge index theorem (see Lemma~\ref{lem-orbifold-Hodge-index}) to reduce to the stable case (see also Lemma~\ref{lem-Langer-ineq} or the proof of \cite[Equation (4.16)]{IJZ25}).

We may thus assume that \( E_{\mathrm{orb}} \) is stable with respect to \( q^*\beta \cdot q^*\alpha_1 \cdots q^*\alpha_{n-2} \). Let \( \omega_{\mathrm{orb}} \) be an orbifold K\"ahler form. By \cite[Corollary~6.10]{Toma19} (see also \cite[Theorem~5]{Wu21} and \cite[Lemma~3.16]{DO23}), the bundle \( E_{\mathrm{orb}} \) is stable with respect to the product
\[
\big(q^*\beta + \varepsilon\{\omega_{\mathrm{orb}}\}\big) \cdot \big(q^*\alpha_1 + \varepsilon\{\omega_{\mathrm{orb}}\}\big) \cdots \big(q^*\alpha_{n-2} + \varepsilon\{\omega_{\mathrm{orb}}\}\big)
\]
for sufficiently small \( \varepsilon > 0 \). Thus, by Lemma~\ref{lem-orbifold-version-CW24}, we obtain
\[
\widehat{\Delta}_{\mathrm{orb}}(E_{\mathrm{orb}}) \cdot \big(q^*\alpha_1 + \varepsilon\{\omega_{\mathrm{orb}}\}\big) \cdots \big(q^*\alpha_{n-2} + \varepsilon\{\omega_{\mathrm{orb}}\}\big) \ge 0.
\]
Letting \( \varepsilon \to 0 \), we obtain the desired inequality.
\end{proof}

\subsection{Langer's inequality and semipositivity of the second Chern class}
\label{subsec-Langer}

Throughout Subsection \ref{subsec-Langer}, we consider the following setup.
\begin{setup}
\label{setup-Langer-ineq}
Let $X$ be a compact normal analytic variety in Fujiki's class with quotient singularities in codimension $2$. Assume that $X$ admits a nef and big class, and let $\alpha_1, \ldots, \alpha_{n-2} \in H^{1,1}_{BC}(X)$ be nef and big classes. Set $\polar := \alpha_1 \cdots \alpha_{n-2}$.
\end{setup}


\begin{lem}[{cf.~\cite[Theorem 5.1]{Lan04}}]
\label{lem-Langer-ineq}
Under Setup~\ref{setup-Langer-ineq}, let $\mathcal{E}$ be a reflexive sheaf of rank $r$. Consider a filtration of reflexive sheaves:
\[
0 =: \mathcal{E}_0 \subsetneq \mathcal{E}_1 \subsetneq \ldots \subsetneq \mathcal{E}_l := \mathcal{E},
\]
where $\mathcal{G}_i := \mathcal{E}_i / \mathcal{E}_{i-1}$ is a torsion-free sheaf of rank $r_i$. Then the following inequality holds:
\begin{align}
\begin{split}
\label{eq-Langer-Theorem:5.1}
\frac{\widehat{\Delta}(\mathcal{E})}{r} \cdot \polar
\ge \sum_{i=1}^{l} \frac{\widehat{\Delta}(\mathcal{G}_{i}^{\vee\vee})}{r_i} \cdot \polar
- \frac{1}{r} \sum_{1 \le i < j \le l} r_i r_j \left( \frac{\widehat{c}_1(\mathcal{G}_{i}^{\vee\vee})}{r_i} - \frac{\widehat{c}_1(\mathcal{G}_{j}^{\vee\vee})}{r_j} \right)^2 \cdot \polar.
\end{split}
\end{align}
\end{lem}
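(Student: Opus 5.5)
We argue by induction on the length $l$ of the filtration, following the proof of \cite[Theorem 5.1]{Lan04}. When $l=1$ there is nothing to prove, because $\mathcal{G}_1=\mathcal{E}$ is already reflexive and the correction sum is empty. For the inductive step, we apply the induction hypothesis to the filtration $\mathcal{E}_0\subsetneq\cdots\subsetneq\mathcal{E}_{l-1}$ of the reflexive sheaf $\mathcal{E}_{l-1}$. This reduces the claim to a two-step statement about the exact sequence
\[
0\to \mathcal{E}_{l-1}\to \mathcal{E}\to \mathcal{G}_l\to 0,
\]
where $\mathcal{E}_{l-1}$ and $\mathcal{E}$ are reflexive and $\mathcal{G}_l$ is torsion-free. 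We then combine the two-step inequality with the inductive one using a purely algebraic identity.

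The two-step inequality reads as follows. Let $0\to\mathcal{A}\to\mathcal{F}\to\mathcal{B}\to0$ be exact, with $\mathcal{A}$ and $\mathcal{F}$ reflexive of ranks $a$ and $a+b=r$, and $\mathcal{B}$ torsion-free of rank $b$. Then
\[
\frac{\widehat{\Delta}(\mathcal{F})}{r}\cdot\Omega \ge \frac{\widehat{\Delta}(\mathcal{A})}{a}\cdot\Omega+\frac{\widehat{\Delta}(\mathcal{B}^{\vee\vee})}{b}\cdot\Omega-\frac{ab}{r}\Big(\frac{\widehat{c}_1(\mathcal{A})}{a}-\frac{\widehat{c}_1(\mathcal{B}^{\vee\vee})}{b}\Big)^2\cdot\Omega.
\]
To prove it, we substitute \eqref{eq-1stchern-exact} and \eqref{eq-2ndchern-exact} of Proposition~\ref{prop-exact-sequence} into $\widehat{\Delta}(\mathcal{F})=2r\widehat{c}_2(\mathcal{F})-(r-1)\widehat{c}_1(\mathcal{F})^2$. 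We are allowed to do this because the $\alpha_i$ are nef. The inequality \eqref{eq-2ndchern-exact} enters with the positive coefficient $2r$, so the direction of the inequality is preserved. After this substitution everything is expressed in terms of $\widehat{c}_1(\mathcal{A})^2$, $\widehat{c}_1(\mathcal{B}^{\vee\vee})^2$, $\widehat{c}_1(\mathcal{A})\cdot\widehat{c}_1(\mathcal{B}^{\vee\vee})$, $\widehat{c}_2(\mathcal{A})$ and $\widehat{c}_2(\mathcal{B}^{\vee\vee})$, all intersected with $\Omega$. The result is then a bilinear identity, the same one that holds for a split bundle $A\oplus B$:
\[
\frac{\Delta(A\oplus B)}{r}=\frac{\Delta(A)}{a}+\frac{\Delta(B)}{b}-\frac{ab}{r}\Big(\frac{c_1(A)}{a}-\frac{c_1(B)}{b}\Big)^2 .
\]
We check this by expanding the coefficients of $c_1^2$ and of $c_1(A)c_1(B)$. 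For instance, the coefficient of $c_1(A)^2$ is $-\frac{r-1}{r}$ on the left, and on the right it is $-\frac{a-1}{a}-\frac{b}{ra}=-\frac{r-1}{r}$. The linear and mixed terms match in the same way. All products are defined through a common orbifold modification, since $\widehat{c}_1$ of reflexive sheaves behaves bilinearly under \cite[Lemma 2.2]{Ou25b}.

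For the inductive step, we apply the two-step inequality with $\mathcal{A}=\mathcal{E}_{l-1}$, which has rank $r'=r-r_l$, and $\mathcal{B}=\mathcal{G}_l$. We then bound $\widehat{\Delta}(\mathcal{E}_{l-1})/r'$ from below by the induction hypothesis. The remaining step is to show that the quadratic correction terms combine into $-\frac1r\sum_{i<j}r_ir_j(\mu_i-\mu_j)^2$, where $\mu_i=\widehat{c}_1(\mathcal{G}_i^{\vee\vee})/r_i$. Here we also need the relation
\[
\widehat{c}_1(\mathcal{E}_{l-1})\cdot(\cdot)\cdot\Omega=\sum_{i<l}\widehat{c}_1(\mathcal{G}_i^{\vee\vee})\cdot(\cdot)\cdot\Omega ,
\]
which follows by repeated use of \eqref{eq-1stchern-exact}, together with its bilinear version proved the same way. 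Granting this, the needed identity is the variance decomposition
\[
\frac1{r'}\sum_{i<j<l}r_ir_j(\mu_i-\mu_j)^2+\frac{r'r_l}{r}(\bar\mu-\mu_l)^2=\frac1r\sum_{i<j\le l}r_ir_j(\mu_i-\mu_j)^2,
\qquad \bar\mu=\frac1{r'}\sum_{i<l}r_i\mu_i ,
\]
which is a formal identity in the symmetric bilinear form $(x,y)\mapsto x\cdot y\cdot\Omega$.

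The only non-formal input is the sign of the second Chern class inequality \eqref{eq-2ndchern-exact}, and this is already provided by Proposition~\ref{prop-exact-sequence}. The main obstacle is therefore the careful bookkeeping that ensures each mixed product $\widehat{c}_1\cdot\widehat{c}_1\cdot\Omega$ is additive along the filtration. This is needed because the intermediate quotients are only torsion-free, so their reflexive hulls have to be used consistently. We handle it by working on one fixed orbifold modification $Z\to X$ on which all the pulled-back sheaves are reflexive orbi-sheaves. On $Z$ the additivity of $c_1^{\orb}$ holds modulo classes supported in codimension $\ge2$ over $X^\circ$, and these classes are killed by \cite[Lemma 2.2]{Ou25b}.
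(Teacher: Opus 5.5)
Your proof is correct and is essentially the paper's argument: both use only Proposition~\ref{prop-exact-sequence} plus a formal quadratic identity. The paper simply unrolls your induction, iterating \eqref{eq-2ndchern-exact} to get $2\widehat{c}_2(\mathcal{E})\cdot\Omega \ge \sum_i \bigl(2\widehat{c}_2(\mathcal{G}_i^{\vee\vee})-\widehat{c}_1(\mathcal{G}_i^{\vee\vee})^2\bigr)\cdot\Omega+\widehat{c}_1(\mathcal{E})^2\cdot\Omega$ and combining it in one step with $\frac{1}{r}\sum_{i<j}r_ir_j(\mu_i-\mu_j)^2\cdot\Omega=\bigl(\sum_i \widehat{c}_1(\mathcal{G}_i^{\vee\vee})^2/r_i-\widehat{c}_1(\mathcal{E})^2/r\bigr)\cdot\Omega$, rather than using your two-step inequality plus variance decomposition; the bilinear additivity of $\widehat{c}_1$ along the filtration that you make explicit is also tacitly needed in the paper's ``direct computation''.
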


\begin{proof}
From Proposition~\ref{prop-exact-sequence}, a direct computation yields
\begin{align*}
\begin{split}
\frac{1}{r} \sum_{1 \le i < j \le l} r_i r_j \left( \frac{\widehat{c}_1(\mathcal{G}_{i}^{\vee\vee})}{r_i} - \frac{\widehat{c}_1(\mathcal{G}_{j}^{\vee\vee})}{r_j} \right)^2 \cdot\polar
& \underalign{\text{}}{=}\frac{1}{2r}\sum_{ i  = 1}^{l}\sum_{ j  = 1}^{l}r_i r_j 
\left( \frac{\widehat{c}_1(\mathcal{G}_{i}^{\vee\vee})}{r_i} - \frac{\widehat{c}_1(\mathcal{G}_{j}^{\vee\vee})}{r_j} \right)^2 \cdot\polar\\
& \underalign{\text{(Prop. \ref{prop-exact-sequence})}}{=} \left( \sum_{ i  = 1}^{l} \frac{\widehat{c}_1(\mathcal{G}_{i}^{\vee\vee})^2}{r_i} -   \frac{\widehat{c}_1(\mathcal{E})^2}{r} \right)\cdot \polar
\end{split}
\end{align*}
On the other hand, repeatedly applying Proposition~\ref{prop-exact-sequence}, we obtain
\begin{align}
\begin{split}
\label{eq-Miyaoka-2}
2 \widehat{c}_2(\mathcal{E})\cdot\polar
\underalign{\text{(Prop. \ref{prop-exact-sequence})}}{\ge}
\left( \sum_{ i  = 1}^{l } \left(2\widehat{c}_2(\mathcal{G}_i^{\vee\vee}) - \widehat{c}_1(\mathcal{G}_i^{\vee\vee})^2 \right)\right) \cdot\polar
 + \widehat{c}_1(\mathcal{E})^2\cdot\polar,
\end{split}
\end{align}
Combining these computations, inequality \eqref{eq-Langer-Theorem:5.1} follows.
\end{proof}

\begin{prop}[{cf.~ \cite[Theorem 5.1]{Lan04}}]
\label{prop-BGinequality-bigclass}
Under Setup~\ref{setup-Langer-ineq}, let $\beta$ be a nef class such that $\beta \cdot \polar \not\equiv 0$. Then for any reflexive sheaf $\mathcal{E}$ of rank $r$, the following statements hold:
\begin{enumerate}[label=$(\arabic*)$]
\item If $\beta^2 \cdot \polar > 0$, then
$$
\frac{\widehat{\Delta}(\mathcal{E})}{r} \cdot \polar
\ge
-\frac{r}{\beta^{2} \cdot \polar}
\left(\mu_{ \beta \cdot \polar }^{\max}(\mathcal{E}) - \mu_{\beta \cdot \polar}(\mathcal{E})\right)
\left(\mu_{\beta \cdot \polar}(\mathcal{E}) - \mu_{\beta \cdot \polar }^{\min}(\mathcal{E})\right).
$$

\item If $\beta^2 \cdot \polar > 0$ and $\mu^{\min}_{\beta \cdot \polar}(\mathcal{E}) \ge 0$, then
$$
2\widehat{c}_2(\mathcal{E}) \cdot \polar 
\ge 
\left(1 - \frac{1}{r}\right)\widehat{c}_1(\mathcal{E})^2 \cdot \polar 
- \left(\frac{1}{r_1} - \frac{1}{r}\right) \cdot \frac{(\widehat{c}_1(\mathcal{E}) \cdot \beta \cdot \polar)^2}{\beta^2 \cdot \polar},
$$
where $r_1$ denotes the rank of the maximal destabilizing subsheaf of $\mathcal{E}$ with respect to $\beta \cdot \polar$.

\item If $\widehat{c}_1(\mathcal{E}) \cdot \beta \cdot \polar = 0$ and $\mu^{\min}_{\beta \cdot \polar}(\mathcal{E}) \ge 0$, then
$$
2\widehat{c}_2(\mathcal{E}) \cdot \polar 
\ge 
\left(1 - \frac{1}{r}\right)\widehat{c}_1(\mathcal{E})^2 \cdot \polar.
$$
\end{enumerate}
\end{prop}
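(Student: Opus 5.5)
We follow the argument of \cite[Theorem~5.1]{Lan04}, with the Harder--Narasimhan filtration and Lemma~\ref{lem-Langer-ineq} as the main tools. Take the Harder--Narasimhan filtration of $\mathcal{E}$ with respect to $\beta\cdot\polar$, made up of saturated, hence reflexive, subsheaves $0=\mathcal{E}_0\subsetneq\cdots\subsetneq\mathcal{E}_l=\mathcal{E}$. Write $\mathcal{G}_i=\mathcal{E}_i/\mathcal{E}_{i-1}$, with ranks $r_i$ and slopes $\mu_i:=\mu_{\beta\cdot\polar}(\mathcal{G}_i)$. These slopes satisfy $\mu_1=\mu^{\max}>\mu_2>\cdots>\mu_l=\mu^{\min}$, and $\sum r_i\mu_i=r\mu$. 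Each $\mathcal{G}_i^{\vee\vee}$ is $\beta\cdot\polar$-semistable, and $\beta\cdot\polar\not\equiv 0$. So Corollary~\ref{cor-BGinequality-mixed}(1) gives $\widehat{\Delta}(\mathcal{G}_i^{\vee\vee})\cdot\polar\ge 0$. Substituting into Lemma~\ref{lem-Langer-ineq}, we get
\[
\frac{\widehat{\Delta}(\mathcal{E})}{r}\cdot\polar\ \ge\ -\frac{1}{r}\sum_{i<j} r_ir_j\,\xi_{ij}^2\cdot\polar,\qquad \xi_{ij}:=\frac{\widehat{c}_1(\mathcal{G}_i^{\vee\vee})}{r_i}-\frac{\widehat{c}_1(\mathcal{G}_j^{\vee\vee})}{r_j}.
\]
All three statements then come from bounding the terms $\xi_{ij}^2\cdot\polar$ from above with the Hodge index theorem, Proposition~\ref{prop-Hodge-index}. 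The first Chern classes are represented on a common orbifold modification, as in that proposition's proof.

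For (1), note that $\xi_{ij}\cdot\beta\cdot\polar=\mu_i-\mu_j$. Applying Lemma~\ref{lem-orbifold-Hodge-index}(1) with $\gamma=\xi_{ij}$ gives $\xi_{ij}^2\cdot\polar\le(\mu_i-\mu_j)^2/(\beta^2\cdot\polar)$. It then remains to prove the purely combinatorial inequality
\[
\sum_{i<j}r_ir_j(\mu_i-\mu_j)^2\le r^2(\mu_1-\mu)(\mu-\mu_l),
\]
which is Langer's. Since $\sum_{i<j}r_ir_j(\mu_i-\mu_j)^2=r\sum_i r_i(\mu_i-\mu)^2$, it suffices to note that $(\mu_i-\mu)^2\le(\mu_1-\mu)(\mu-\mu_l)+(\mu_1+\mu_l-2\mu)(\mu_i-\mu)$, because $(\mu_1-\mu_i)(\mu_i-\mu_l)\ge 0$. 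Summing with weights $r_i$, the linear term vanishes. This proves (1).

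For (2) and (3), we do not use the $\widehat{\Delta}$ form. Instead we use the intermediate identities in the proof of Lemma~\ref{lem-Langer-ineq}, i.e.\ \eqref{eq-Miyaoka-2}:
\[
2\widehat{c}_2(\mathcal{E})\cdot\polar\ \ge\ \sum_i\bigl(2\widehat{c}_2(\mathcal{G}_i^{\vee\vee})-\widehat{c}_1(\mathcal{G}_i^{\vee\vee})^2\bigr)\cdot\polar+\widehat{c}_1(\mathcal{E})^2\cdot\polar.
\]
The Bogomolov--Gieseker inequality for each $\mathcal{G}_i^{\vee\vee}$ bounds $2\widehat{c}_2(\mathcal{G}_i^{\vee\vee})\cdot\polar$ below by $(1-1/r_i)\widehat{c}_1(\mathcal{G}_i^{\vee\vee})^2\cdot\polar$. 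Hence
\[
2\widehat{c}_2(\mathcal{E})\cdot\polar\ \ge\ \widehat{c}_1(\mathcal{E})^2\cdot\polar-\sum_i\frac{1}{r_i}\widehat{c}_1(\mathcal{G}_i^{\vee\vee})^2\cdot\polar.
\]
For (2), the Hodge index theorem gives $\widehat{c}_1(\mathcal{G}_i^{\vee\vee})^2\cdot\polar\le r_i^2\mu_i^2/(\beta^2\cdot\polar)$. Since $\mu_i\ge\mu^{\min}\ge 0$, we have $\sum_i r_i\mu_i^2\le\mu_1\sum_i r_i\mu_i=r_1\mu_1\cdot(r\mu)/r_1$. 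Writing $c:=\widehat{c}_1(\mathcal{E})\cdot\beta\cdot\polar=r\mu$ and $\mu_1\le c/r_1$ (valid because $\mu_i\ge 0$ forces $r_1\mu_1\le r\mu$), we obtain $\sum_i r_i\mu_i^2\le c^2/r_1$. This gives $2\widehat{c}_2\cdot\polar\ge\widehat{c}_1^2\cdot\polar-c^2/(r_1\,\beta^2\cdot\polar)$. To reach the stated form, add $-\tfrac1r\widehat{c}_1^2\cdot\polar+\tfrac1r c^2/(\beta^2\cdot\polar)\ge 0$, which is the Hodge index inequality \eqref{eq-hodge-index-ob-1} for $\mathcal{E}$. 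This recovers exactly the claimed bound. For (3), the conditions $c=0$ and $\mu_i\ge 0$ force every $\mu_i=0$, so $\widehat{c}_1(\mathcal{G}_i^{\vee\vee})\cdot\beta\cdot\polar=0$. If $\beta^2\cdot\polar>0$, Lemma~\ref{lem-orbifold-Hodge-index}(1) gives $\widehat{c}_1(\mathcal{G}_i^{\vee\vee})^2\cdot\polar\le 0$. If $\beta^2\cdot\polar=0$, we use Lemma~\ref{lem-orbifold-Hodge-index}(2) with $\alpha$ a nef and big class, which Setup~\ref{setup-Langer-ineq} provides. Here we need $\beta\cdot\alpha\cdot\polar>0$, which follows from $\beta\cdot\polar\not\equiv 0$ together with Remark~\ref{rem-Hodge-index-num}. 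The same Hodge index bound gives $-\widehat{c}_1(\mathcal{E})^2\cdot\polar/r\ge 0$, and (3) follows.

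The step I expect to need the most care is the degenerate case $\beta^2\cdot\polar=0$ in (3). There one has to check that a nef and big class $\alpha$ with $\beta\cdot\alpha\cdot\polar>0$ exists; I would take $\alpha$ to be one of the nef and big classes and argue positivity via a Kähler class on a resolution. The bookkeeping showing that the graded pieces' first Chern classes live in classes to which the Hodge index lemma applies also needs care.
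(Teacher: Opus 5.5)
Your part (1) is correct and follows the paper's argument: Harder--Narasimhan filtration, Lemma~\ref{lem-Langer-ineq} with Corollary~\ref{cor-BGinequality-mixed}, the Hodge index theorem applied to the differences $\xi_{ij}$, and then Langer's combinatorial lemma (your proof of it via $(\mu_1-\mu_i)(\mu_i-\mu_l)\ge 0$ is valid). Parts (2) and (3), however, have a genuine gap. After bounding each $\widehat{c}_1(\mathcal{G}_i^{\vee\vee})^2\cdot\Omega$ separately, you obtain $2\widehat{c}_2(\mathcal{E})\cdot\Omega\ge\widehat{c}_1(\mathcal{E})^2\cdot\Omega-c^2/(r_1\,\beta^2\cdot\Omega)$. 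The claimed right-hand side equals this quantity \emph{plus} $\frac{1}{r}\bigl(c^2/(\beta^2\cdot\Omega)-\widehat{c}_1(\mathcal{E})^2\cdot\Omega\bigr)$, which is $\ge 0$ by the Hodge index theorem. So the statement is stronger than what you proved, and adding a nonnegative term to the right-hand side of a lower bound is not a valid step. For example, take $r=2$, $r_1=1$, $\beta^2\cdot\Omega=1$, $c=2$ and $\widehat{c}_1(\mathcal{E})^2\cdot\Omega=0$: you have shown $2\widehat{c}_2(\mathcal{E})\cdot\Omega\ge -4$, whereas (2) asserts $\ge -2$. The same error occurs in (3). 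There you reach $2\widehat{c}_2(\mathcal{E})\cdot\Omega\ge\widehat{c}_1(\mathcal{E})^2\cdot\Omega$. But $\widehat{c}_1(\mathcal{E})^2\cdot\Omega\le 0$ in that situation, so the target $(1-\frac1r)\widehat{c}_1(\mathcal{E})^2\cdot\Omega$ is \emph{larger} than your bound, and the conclusion does not follow.

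The information is lost when you apply the Hodge index theorem to each $\widehat{c}_1(\mathcal{G}_i^{\vee\vee})$ individually. The quantity that actually has to be controlled is
\[
\sum_i\frac{1}{r_i}\widehat{c}_1(\mathcal{G}_i^{\vee\vee})^2\cdot\Omega-\frac{1}{r}\widehat{c}_1(\mathcal{E})^2\cdot\Omega
=\sum_i r_i\Bigl(\frac{\widehat{c}_1(\mathcal{G}_i^{\vee\vee})}{r_i}-\frac{\widehat{c}_1(\mathcal{E})}{r}\Bigr)^2\cdot\Omega ,
\]
and the Hodge index theorem must be applied to these differences, whose $\beta\cdot\Omega$-degrees are $\mu_i-\mu$. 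Equivalently, apply it to the $\xi_{ij}$, i.e.\ keep the $\widehat{\Delta}$ form you set aside. This is what the paper does. It deduces (2) directly from (1): $\mu_l\ge 0$ and $r_1\mu_1\le r\mu$ give $r(\mu_1-\mu)(\mu-\mu_l)\le r(\mu_1-\mu)\mu\le r^2\mu^2(\frac{1}{r_1}-\frac{1}{r})$, with $r\mu=c$. It proves (3) from $\widehat{\Delta}(\mathcal{E})\cdot\Omega\ge-\sum_{i<j}r_ir_j\,\xi_{ij}^2\cdot\Omega$, using $\xi_{ij}\cdot\beta\cdot\Omega=\mu_i-\mu_j=0$, so that $\xi_{ij}^2\cdot\Omega\le 0$ by Proposition~\ref{prop-Hodge-index} in case (1) or (2). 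With that change, your choice of $\alpha$ for the degenerate case $\beta^2\cdot\Omega=0$ works as written.
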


\begin{proof}
(1) Consider the Harder--Narasimhan filtration of $\mathcal{E}$ with respect to $\beta \cdot \polar$:
\[
0 =: \mathcal{E}_0 \subsetneq \mathcal{E}_1 \subsetneq \ldots \subsetneq \mathcal{E}_l := \mathcal{E},
\]
where each $\mathcal{G}_i := \mathcal{E}_i / \mathcal{E}_{i-1}$ is a torsion-free $\beta \cdot \polar$-semistable sheaf of rank $r_i$. Let $\mu := \mu_{\beta \cdot \polar}(\mathcal{E})$ and $\mu_i := \mu_{\beta \cdot \polar}(\mathcal{G}_i^{\vee\vee})$. By the property of the Harder--Narasimhan filtration, we have
\begin{equation}
\label{eq-HNfiltration-maxmin}
\mu_1 = \mu^{\max}_{\beta \cdot \polar}(\mathcal{E}), \quad
\mu_l = \mu^{\min}_{\beta \cdot \polar}(\mathcal{E}), \quad
\text{and} \quad \mu_1 > \cdots > \mu_l.
\end{equation}
By Corollary~\ref{cor-BGinequality-mixed} and Lemma~\ref{lem-Langer-ineq}, we have
\begin{equation}
\label{eq-BG-Langer}
\widehat{\Delta}(\mathcal{E}) \cdot \polar \ge
- \sum_{1 \le i < j \le l} r_i r_j \left( \frac{\widehat{c}_1(\mathcal{G}_i^{\vee\vee})}{r_i} - \frac{\widehat{c}_1(\mathcal{G}_j^{\vee\vee})}{r_j} \right)^2 \cdot \polar.
\end{equation}
Applying the Hodge index theorem (see Proposition~\ref{prop-Hodge-index}), we obtain
\begin{equation}
\label{eq-BG-Langer-1}
\left( \left( \frac{\widehat{c}_1(\mathcal{G}_i^{\vee\vee})}{r_i} - \frac{\widehat{c}_1(\mathcal{G}_j^{\vee\vee})}{r_j} \right)^2 \cdot \polar \right)
\cdot (\beta^2 \cdot \polar)
\le
\left( \left( \frac{\widehat{c}_1(\mathcal{G}_i^{\vee\vee})}{r_i} - \frac{\widehat{c}_1(\mathcal{G}_j^{\vee\vee})}{r_j} \right) \cdot \beta \cdot \polar \right)^2.
\end{equation}
It then follows from \cite[Lemma 1.4]{Lan04} that
\begin{align*}
\sum_{1 \le i < j \le l} r_i r_j \left( \frac{\widehat{c}_1(\mathcal{G}_{i}^{\vee\vee})}{r_i} - \frac{\widehat{c}_1(\mathcal{G}_{j}^{\vee\vee})}{r_j} \right)^2 \cdot  \polar 
&\underalign{\text{(\ref{eq-BG-Langer-1})}}{\le}
\frac{1}{\beta^2 \cdot \polar}
\sum_{1 \le i < j \le l} r_ir_j\left( \mu_i - \mu_j \right)^2 \\
&\underalign{\text{\cite{Lan04}}}{\le}
\frac{r^2}{ \beta^2\cdot  \polar }
\left(\mu_1 - \mu\right)
\left(\mu - \mu_l\right).
\end{align*}
Therefore, by substituting the above expression into \eqref{eq-BG-Langer} and using \eqref{eq-HNfiltration-maxmin}, we obtain the desired inequality in (1).

(2) From the assumption \( \mu^{\min}_{\beta \cdot \polar}(\mathcal{E}) \ge 0 \), we have
\( r \mu \ge r_1 \mu_1 \) and \( \mu_l \ge 0 \) by \eqref{eq-HNfiltration-maxmin}, and hence
\[
- \frac{r}{\beta^2 \cdot \polar}
(\mu_1 - \mu)(\mu - \mu_l)
\ge
- \frac{r^2 \mu^2}{\beta^2 \cdot \polar} \left( \frac{1}{r_1} - \frac{1}{r} \right).
\]
Since $r\mu = \widehat{c}_1(\mathcal{E}) \cdot \beta \cdot \polar$, the inequality in (2) follows from (1).

(3) From \eqref{eq-BG-Langer}, it suffices to show
\begin{equation}
\label{eq-BG-Langer-nu1}
\left( \frac{\widehat{c}_1(\mathcal{G}_i^{\vee\vee})}{r_i} - \frac{\widehat{c}_1(\mathcal{G}_j^{\vee\vee})}{r_j} \right)^2 \cdot \polar \le 0.
\end{equation}
Since $\widehat{c}_1(\mathcal{E}) \cdot \beta \cdot \polar = 0$, we have $\sum c_1(\mathcal{G}_i^{\vee\vee}) \cdot \beta \cdot \polar = 0$. Since $\mu^{\min}_{\beta \cdot \polar}(\mathcal{E}) \ge 0$, it follows that $c_1(\mathcal{G}_i^{\vee\vee}) \cdot \beta \cdot \polar = 0$ for all $i$. Then applying the Hodge index theorem to $\frac{\widehat{c}_1(\mathcal{G}_i^{\vee\vee})}{r_i} - \frac{\widehat{c}_1(\mathcal{G}_j^{\vee\vee})}{r_j}$ yields \eqref{eq-BG-Langer-nu1}.
\end{proof}

\begin{cor}
\label{cor-semipositivity-c2}
Under Setup~\ref{setup-Langer-ineq}, for any reflexive sheaf $\mathcal{E}$ such that $\det \mathcal{E}$ is a nef $\Q$-line bundle and $\mu_{c_1(\mathcal{E}) \cdot \polar}^{\min}(\mathcal{E}) \ge 0$, the following statements hold.
\begin{enumerate}[label=$(\arabic*)$]
\item If $c_1(\mathcal{E})^2 \cdot \polar > 0$, then the following inequality holds:
$$
\widehat{c}_2(\mathcal{E}) \cdot \polar \ge \frac{r_1 - 1}{2 r_1} c_1(\mathcal{E})^2 \cdot \polar
$$
where $r_1$ is the rank of the maximal destabilizing subsheaf of $\mathcal{E}$ with respect to $c_1(\mathcal{E}) \cdot \polar$.

\item If $c_1(\mathcal{E})^2 \cdot \polar = 0$, then the following inequality holds:
$$
\widehat{c}_2(\mathcal{E}) \cdot \polar \ge 0.
$$
\end{enumerate}
\end{cor}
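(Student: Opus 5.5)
The plan is to apply Proposition~\ref{prop-BGinequality-bigclass} with $\beta := c_1(\mathcal{E})$, i.e.\ the first Chern class of the nef $\mathbb{Q}$-line bundle $\det\mathcal{E}$, viewed as a nef class in $H^{1,1}_{BC}(X)$. Write $r := \rk \mathcal{E}$. Since $\det\mathcal{E}$ is a $\mathbb{Q}$-line bundle, $\widehat{c}_1(\mathcal{E})$ agrees with $\beta$ in all the intersection numbers that appear. In particular,
\[
\widehat{c}_1(\mathcal{E})^2\cdot\polar = \beta^2\cdot\polar,
\qquad
\widehat{c}_1(\mathcal{E})\cdot\beta\cdot\polar=\beta^2\cdot\polar .
\]
To justify this, I would work on the orbifold modification $q\colon Z\to X$. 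There the reflexive pullback of $\det\mathcal{E}$ computes $c_1^{\orb}$ of the orbi-bundle, up to corrections supported over a set of codimension $\ge 3$, so \cite[Lemma 2.2]{Ou25b} lets me identify the numbers.

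For case (1), assume $\beta^2\cdot\polar>0$. Then $\beta\cdot\polar\not\equiv 0$ automatically. The hypothesis $\mu^{\min}_{\beta\cdot\polar}(\mathcal{E})\ge 0$ is exactly what Proposition~\ref{prop-BGinequality-bigclass}(2) requires, and it gives
\[
2\widehat{c}_2(\mathcal{E})\cdot\polar \ge \Big(1-\frac1r\Big)\beta^2\cdot\polar-\Big(\frac1{r_1}-\frac1r\Big)\frac{(\beta^2\cdot\polar)^2}{\beta^2\cdot\polar}
=\Big(1-\frac1{r_1}\Big)\beta^2\cdot\polar.
\]
Dividing by $2$ yields $\widehat{c}_2(\mathcal{E})\cdot\polar\ge \frac{r_1-1}{2r_1}c_1(\mathcal{E})^2\cdot\polar$.

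For case (2), assume $\beta^2\cdot\polar=0$, so that $\widehat{c}_1(\mathcal{E})\cdot\beta\cdot\polar=\beta^2\cdot\polar=0$. If $\beta\cdot\polar\not\equiv 0$, then Proposition~\ref{prop-BGinequality-bigclass}(3) gives $2\widehat{c}_2(\mathcal{E})\cdot\polar\ge(1-\frac1r)\beta^2\cdot\polar=0$. If instead $\beta\cdot\polar\equiv 0$, the slopes with respect to $\beta\cdot\polar$ are all zero and carry no information, so I would use a perturbation. Fix a nef and big class $\alpha$ (one exists by Setup~\ref{setup-Langer-ineq}) and set $\beta_t:=\beta+t\alpha$, which is nef and big for $t>0$ with $\beta_t^2\cdot\polar>0$.

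The main obstacle is this degenerate subcase, because the hypothesis $\mu^{\min}\ge 0$ is only given for the polarization $\beta\cdot\polar$, not for $\beta_t\cdot\polar$. I would first try to avoid needing it: apply Proposition~\ref{prop-BGinequality-bigclass}(1) with $\beta_t$, and bound $\mu^{\max}_{\beta_t\cdot\polar}-\mu^{\min}_{\beta_t\cdot\polar}$ as $t\to0$. This should be $O(t)$, since at $t=0$ every subsheaf has slope $0$, and the slope of each subsheaf depends linearly on $t$ with coefficient bounded uniformly, by boundedness of $\mu^{\max}_{\alpha\cdot\polar}$. Meanwhile $\beta_t^2\cdot\polar=2t\,\beta\cdot\alpha\cdot\polar+t^2\alpha^2\cdot\polar$, which is of order $t$ if $\beta\cdot\alpha\cdot\polar>0$, and that quantity vanishes here since $\beta\cdot\polar\equiv 0$. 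So the denominator is of order $t^2$, and the right-hand side of Proposition~\ref{prop-BGinequality-bigclass}(1) is $O(1)$, not obviously $o(1)$. To close this gap I would instead argue directly from the Harder--Narasimhan filtration with respect to $\alpha\cdot\polar$ together with the Hodge index theorem.

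Concretely, I would note that when $\beta\cdot\polar\equiv0$, Proposition~\ref{prop-Hodge-index} gives $\widehat{c}_1(\mathcal{E})\cdot\delta\cdot\polar=0$ for every $\delta$. Then I would use the $\beta$-slope hypothesis in a form compatible with $\alpha$: each graded piece should satisfy $c_1(\mathcal{G}_i^{\vee\vee})\cdot\alpha\cdot\polar\ge 0$, hence all equal $0$. From there, the same computation as in the proof of Proposition~\ref{prop-BGinequality-bigclass}(3), using Proposition~\ref{prop-Hodge-index}(2) with $\alpha$ in place of $\beta$, yields $\widehat{c}_2(\mathcal{E})\cdot\polar\ge0$. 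Verifying this compatibility of slopes is the step I expect to need the most care.
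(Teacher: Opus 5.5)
Your case (1), and your case (2) when $c_1(\mathcal{E})\cdot\polar\not\equiv 0$, are exactly the paper's argument: apply Proposition~\ref{prop-BGinequality-bigclass}(2), resp.\ (3), with $\beta=c_1(\mathcal{E})$, using $\widehat{c}_1(\mathcal{E})\cdot\beta\cdot\polar=\widehat{c}_1(\mathcal{E})^2\cdot\polar=\beta^2\cdot\polar$.

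\textbf{The gap: the degenerate subcase $c_1(\mathcal{E})\cdot\polar\equiv 0$.} This subcase cannot be closed from the stated hypotheses. Your key step is that the graded pieces of the $\alpha\cdot\polar$-Harder--Narasimhan filtration satisfy $c_1(\mathcal{G}_i^{\vee\vee})\cdot\alpha\cdot\polar\ge 0$. Nothing supports it. When $\beta\cdot\polar\equiv 0$, every slope with respect to $\beta\cdot\polar$ is $0$, so the hypothesis $\mu^{\min}_{\beta\cdot\polar}(\mathcal{E})\ge 0$ is vacuous and says nothing about $\alpha$-slopes.

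In fact statement (2) is false in this subcase. Take $X=\mathbb{P}^n$ with all $\alpha_i=c_1(\mathcal{O}(1))$, and $\mathcal{E}=\mathcal{O}(1)\oplus\mathcal{O}(-1)$. Then $\det\mathcal{E}\cong\mathcal{O}_X$ is nef and $c_1(\mathcal{E})=0$, so $\mu^{\min}_{c_1(\mathcal{E})\cdot\polar}(\mathcal{E})=0$ and $c_1(\mathcal{E})^2\cdot\polar=0$. Yet $\widehat{c}_2(\mathcal{E})\cdot\polar=-1$. The $\alpha\cdot\polar$-Harder--Narasimhan pieces are $\mathcal{O}(1)$ and $\mathcal{O}(-1)$, so the compatibility of slopes you hoped to verify fails outright. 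Your perturbation computation, which left an $O(1)$ error term, was detecting exactly this.

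\textbf{The paper's proof has the same gap, and the natural repair.} The paper does not avoid this problem. It asserts that $\mathcal{E}$ is $\alpha_1\cdot\polar$-semistable ``by Remark~\ref{rem-Hodge-index-num}'' and then applies Corollary~\ref{cor-BGinequality-mixed}. That remark only gives $\mu_{\alpha_1\cdot\polar}(\mathcal{E})=0$; semistability also needs $\mu^{\min}_{\alpha_1\cdot\polar}(\mathcal{E})\ge 0$, which is not a hypothesis of the corollary.

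This extra input is available wherever the corollary is used. Generic nefness in Propositions~\ref{prop-genericnef--nonuniruled} and~\ref{prop-genericnef--antinef} holds for every nef polarization, and Theorem~\ref{thm-Enoki-genericnef} holds for every K\"ahler polarization. So the natural fix is to add $\mu^{\min}_{\alpha_1\cdot\polar}(\mathcal{E})\ge 0$ as a hypothesis in the degenerate subcase. With it, your route closes at once:
\begin{itemize}
\item $\mu^{\min}_{\alpha_1\cdot\polar}(\mathcal{E})\ge 0=\mu_{\alpha_1\cdot\polar}(\mathcal{E})$ forces $\mathcal{E}$ to be $\alpha_1\cdot\polar$-semistable.
\item Corollary~\ref{cor-BGinequality-mixed}, together with $\widehat{c}_1(\mathcal{E})^2\cdot\polar=0$, then gives $\widehat{c}_2(\mathcal{E})\cdot\polar\ge 0$.
\end{itemize}
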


\begin{proof}
(1) follows by applying Proposition~\ref{prop-BGinequality-bigclass}(2) with $\beta = c_1(\mathcal{E})$.

(2) If $c_1(\mathcal{E}) \cdot \polar \not\equiv 0$, the statement follows by applying Proposition~\ref{prop-BGinequality-bigclass}(3) with $\beta = c_1(\mathcal{E})$. In the case where $c_1(\mathcal{E}) \cdot \polar \equiv 0$, the sheaf $\mathcal{E}$ is $\alpha_1 \cdot \polar$-semistable by Remark \ref{rem-Hodge-index-num}, and the desired inequality then follows from Corollary~\ref{cor-BGinequality-mixed}.
\end{proof}

\subsection{Proofs of Theorems~\ref{thm-Miyaoka-ineq-nonuniruled}--\ref{thm-nef-anti-canonical} and Corollaries~\ref{cor-MY-canonical}--\ref{cor-MY-anti-canonical}}

We first consider the case where $-K_X$ is nef (Theorem~\ref{thm-nef-anti-canonical} and Corollary~\ref{cor-MY-anti-canonical}). This follows immediately from the previous results, as explained below.

\begin{proof}[Proof of Theorem~\ref{thm-nef-anti-canonical}]
Since $X$ admits a big class, $X$ belongs to Fujiki's class (see Definition~\ref{defn-Fujiki}). Thus, by applying Proposition~\ref{prop-genericnef--antinef} and Corollary~\ref{cor-semipositivity-c2} to the tangent sheaf $\mathcal{T}_X$, with $\gamma := c_1(-K_X) \cdot \alpha_1 \cdots \alpha_{n-2}$ and $\polar := \alpha_1 \cdots \alpha_{n-2}$, we obtain the desired inequality.
\end{proof}

\begin{proof}[Proof of Corollary~\ref{cor-MY-anti-canonical}]
Under assumption (1), since $X$ is a Moishezon K\"ahler variety with at most rational singularities, it follows from \cite[Theorem 6]{Nam01} that $X$ is projective. Since $-K_X$ is nef and big, the inequality follows from \cite[Theorem 1.2]{IJZ25} (cf. \cite[Proposition 1.1]{GKP22} and \cite[Theorem 9]{DGP24}).

In case~(2), note that \( c_1(-K_X)^n = 0 \) by \cite[Lemma~2.35]{DHP24}. By applying Theorem~\ref{thm-nef-anti-canonical} to the classes \( \alpha_i = c_1(-K_X) + \varepsilon \{ \omega \} \) for any \( \varepsilon > 0 \), we obtain
\[
\widehat{c}_2(X) \cdot (c_1(-K_X) + \varepsilon \{ \omega \})^{n-2} \ge 0.
\]
Taking \( \varepsilon \) sufficiently small and considering the term of order $\varepsilon^{n-2-\nu}$, we obtain the desired inequality.
\end{proof}

Next, we consider the case where $K_X$ is nef (Theorems~\ref{thm-Miyaoka-ineq-nonuniruled}, \ref{thm-Miyaoka-klt-kahler} and Corollary~\ref{cor-MY-canonical}). This can be shown as follows, following the ideas of \cite[Section 7]{Miy87} and \cite[Subsection 4.2]{IMM24}.

\begin{proof}[Proof of Theorem~\ref{thm-Miyaoka-ineq-nonuniruled}]
We only consider case (1), since case (2) is analogous.
Set $\polar := \alpha_1 \cdots \alpha_{n-2}$. By Proposition~\ref{prop-genericnef--nonuniruled}, we have $\mu^{\min}_{c_1(K_X) \cdot \polar}(\Omega_{X}^{[1]}) \ge 0$. If $c_1(K_X)^2 \cdot \polar = 0$, then the inequality follows from Corollary~\ref{cor-semipositivity-c2}~(2). Therefore, we may assume that $c_1(K_X)^2 \cdot \polar > 0$.
We take the Harder--Narasimhan filtration with respect to $c_1(K_X)\cdot \polar$:
\[
0 =: \mathcal{E}_0 \subsetneq \mathcal{E}_1 \subsetneq \ldots \subsetneq \mathcal{E}_l := \Omega_{X}^{[1]},
\]
where each $\mathcal{G}_i := \mathcal{E}_i / \mathcal{E}_{i-1}$ is a torsion-free $c_1(K_X)\cdot \polar$-semistable sheaf of rank $r_i$. Let $\mu := \mu_{c_1(K_X) \cdot \polar}(\Omega_{X}^{[1]})$ and $\mu_i := \mu_{c_1(K_X)\cdot \polar}(\mathcal{G}_i^{\vee\vee})$.
By the property of the Harder--Narasimhan filtration, we have
\begin{equation}
\label{eq-Miyaoka-0}
\mu^{\max}_{c_1(K_X) \cdot \polar}(\Omega_{X}^{[1]})=\mu_1 > \cdots > \mu_l= \mu^{\min}_{c_1(K_X)\cdot \polar}(\Omega_{X}^{[1]}) 
\underalign{\text{(Prop. \ref{prop-genericnef--nonuniruled})}}{\ge}0.
\end{equation}
From $\mu_i < \mu_1$, we obtain
\begin{equation}
\label{eq-Miyaoka-1}
\sum_{i=2}^{l} r_i \mu_i^2
\le \mu_1 \sum_{i=2}^{l} r_i \mu_i
= - r_1 \mu_1^2 + n \mu \mu_1.
\end{equation}
Consider a Higgs field $\theta$ on $\mathcal{E} := \mathcal{G}_1 \oplus \mathcal{O}_{X}$ given by
$$
\begin{array}{cccc}
\theta \colon  & \mathcal{E} := \mathcal{G}_1 \oplus \mathcal{O}_{X} &\rightarrow  
&\mathcal{E} \otimes \Omega_{X}^{[1]} = (\mathcal{G}_1 \oplus \mathcal{O}_{X} ) \otimes \Omega_{X}^{[1]} \\
	   & (a,b)&\mapsto& (0,a).
\end{array}
$$
By the same argument as in \cite[Proposition 2.8]{IMM24}, the Higgs sheaf $(\mathcal{E}, \theta)$ is $c_1(K_X)\cdot\polar$-stable.
The Bogomolov--Gieseker inequality (see Corollary~\ref{cor-BGinequality-mixed}) yields
\begin{equation}
\label{eq-Miyaoka-3}
2\widehat{c}_2(\mathcal{G}_{1}^{\vee\vee})\cdot\polar
\underalign{\text{(Cor. \ref{cor-BGinequality-mixed} (2))}}{\ge}
\frac{r_1}{r_1 + 1} \widehat{c}_1(\mathcal{G}_{1}^{\vee\vee})^2\cdot\polar
\quad
\text{and}
\quad
2\widehat{c}_2(\mathcal{G}_{i}^{\vee\vee})\cdot\polar
\underalign{\text{(Cor. \ref{cor-BGinequality-mixed} (1))}}{\ge}
\frac{r_i - 1}{r_i} \widehat{c}_1(\mathcal{G}_{i}^{\vee\vee})^2\cdot\polar
\end{equation}

We first treat the case where $r_1 \ge 2$. Then we have
\begin{align}
\begin{split}
\label{eq-Miyaoka-4}
2 \widehat{c}_2(X)\cdot\polar
&\underalign{\text{(\ref{eq-Miyaoka-2})}}{\ge}
\left( \sum_{ i  = 1}^{l }
\left( 2\widehat{c}_2(\mathcal{G}_{i}^{\vee\vee}) - \widehat{c}_1(\mathcal{G}_{i}^{\vee\vee})^2 \right)\right)\cdot\polar + c_1(K_X)^2\cdot\polar \\
&\underalign{\text{(\ref{eq-Miyaoka-3})}}{\ge}
- \frac{1}{r_1 + 1} \widehat{c}_1(\mathcal{G}_{1}^{\vee\vee})^2\cdot\polar
- \sum_{ i  = 2}^{l } \frac{1}{r_i} \widehat{c}_1(\mathcal{G}_{i}^{\vee\vee})^2\cdot\polar
+ c_1(K_X)^2\cdot\polar \\
&\underalign{\text{(Prop. \ref{prop-Hodge-index})}}{\ge}
- \frac{1}{c_1(K_X)^2\cdot\polar}
\left( \frac{r_1^2}{r_1 + 1}\mu_1^2 + \sum_{i=2}^{l} r_i \mu_i^2 \right) + c_1(K_X)^2\cdot\polar \\
&\underalign{\text{(\ref{eq-Miyaoka-1})}}{\ge}
\frac{1}{c_1(K_X)^2\cdot\polar}
\left( \frac{r_1}{r_1 + 1} \mu_1^2 - n\mu \mu_1 \right) + c_1(K_X)^2\cdot\polar.
\end{split}
\end{align}
Since $\mu_1 \le \frac{n}{r_1} \mu$, the expression $\frac{r_1}{r_1 + 1} \mu_1^2 - n\mu \mu_1$ attains its minimum at $\mu_1 = \frac{n}{r_1} \mu$. Using the identity $n \mu = c_1(K_X)^2 \cdot \polar$,
\begin{align*}
\begin{split}
2 \widehat{c}_2(X)\cdot\polar
&\underalign{\text{(\ref{eq-Miyaoka-4})}}{\ge}
\left( \frac{1}{r_1(r_1+1)} - \frac{1}{r_1} + 1 \right)c_1(K_X)^2\cdot\polar
\underalign{\text{(by $r_1 \ge 2$)}}{\ge}
\frac{2}{3}c_1(K_X)^2\cdot\polar.
\end{split}
\end{align*}
Hence, we obtain the desired inequality.

Next, we consider the case $r_1 = 1$. In this case, $c_1(\mathcal{G}_1)^2 \cdot \polar \le 0$ by \eqref{eq-Miyaoka-3}. Therefore, we obtain
\begin{align}
\begin{split}
\label{eq-Miyaoka-5}
2 \widehat{c}_2(X)\cdot\polar
&\underalign{\text{(\ref{eq-Miyaoka-4})}}{\ge}
- \frac{1}{c_1(K_X)^2\cdot\polar}
\left( \sum_{i=2}^{l} r_i \mu_i^2 \right) + c_1(K_X)^2\cdot\polar 
\\
&\underalign{\text{(\ref{eq-Miyaoka-1})}}{\ge}
\frac{1}{c_1(K_X)^2\cdot\polar}
\left( \mu_1^2 - n\mu \mu_1 \right) + c_1(K_X)^2\cdot\polar.
\end{split}
\end{align}
The function $\mu_1^2 - n\mu \mu_1$ attains its minimum at $\mu_1 = \frac{n}{2} \mu$. Using $n \mu = c_1(K_X)^2\cdot\polar$, we obtain
\begin{align*}
\begin{split}
2 \widehat{c}_2(X)\cdot\polar
&\underalign{\text{(\ref{eq-Miyaoka-5})}}{\ge}
\frac{3}{4}c_1(K_X)^2\cdot\polar
> \frac{2}{3}c_1(K_X)^2\cdot\polar.
\end{split}
\end{align*}
\end{proof}

Next, we prove Theorem~\ref{thm-Miyaoka-klt-kahler}. In this proof, when we say that a certain property holds for \textit{sufficiently small $\varepsilon > 0$}, we mean that there exists some $\varepsilon_0 > 0$ such that the property holds for all $0 < \varepsilon < \varepsilon_0$.

\begin{proof}[Proof of Theorem~\ref{thm-Miyaoka-klt-kahler}]
Set $\beta_{\varepsilon} := c_1(K_X) + \varepsilon \{\omega\}$. We first show that $\mu^{\min}_{c_1(K_X)\cdot \beta_\varepsilon^{n-2}}(\Omega_X^{[1]}) \ge 0$ holds for sufficiently small $\varepsilon > 0$. Suppose, for contradiction, that there exists a sequence $\varepsilon_k \to 0$ such that $\mu^{\min}_{c_1(K_X)\cdot \beta_{\varepsilon_k}^{n-2}}(\Omega_X^{[1]}) < 0$.
As in the argument of \cite[Proposition 2.3]{Cao13}, for sufficiently small $\varepsilon > 0$, the maximal destabilizing subsheaf with respect to $c_1(K_X) \cdot \beta_\varepsilon^{n-2}$ becomes independent of $\varepsilon$. Therefore, there exists a reflexive subsheaf $\mathcal{E} \subset \mathcal{T}_X$ such that
$\mu_{c_1(K_X)\cdot \beta_{\varepsilon_k}^{n-2}}(\mathcal{E}) > 0$ for all sufficiently large $k$.

Now consider the function
\[
F(x_1, \ldots, x_{n-1}) := c_1(\mathcal{E}) \cdot (c_1(K_X) + x_1 \{\omega\}) \cdots (c_1(K_X) + x_{n-1} \{\omega\}).
\]
This is a symmetric polynomial, which can be written as
\[
F(x_1, \ldots, x_{n-1}) = \sum_{i=0}^{n-1} a_i \sigma_i,
\]
where $a_i \in \mathbb{R}$ and $\sigma_i$ denotes the $i$-th elementary symmetric polynomial of $x_1, \ldots, x_{n-1}$. From the assumption that $F(0, \varepsilon_k, \ldots, \varepsilon_k) > 0$ for all $k$, it follows that there exists an index $i$ such that $a_0 = \cdots = a_{i-1} = 0$ and $a_i > 0$. Hence, for sufficiently small $\varepsilon > 0$, we also have $F(\varepsilon, \ldots, \varepsilon) > 0$. This implies that
\[
\mu^{\max}_{\beta_\varepsilon^{n-1}}(\mathcal{T}_X) \ge
\frac{ F(\varepsilon, \ldots, \varepsilon)}{n}> 0
\]
for sufficiently small $\varepsilon > 0$, which contradicts Theorem \ref{thm-Enoki-genericnef}.

The remainder of the proof follows the same line of argument as the proof of Theorem~\ref{thm-Miyaoka-ineq-nonuniruled}. Indeed, if $c_1(K_X)^2 \cdot \beta_\varepsilon^{n-2} = 0$, then the result follows from Corollary~\ref{cor-semipositivity-c2}.
 If $c_1(K_X)^2 \cdot \beta_\varepsilon^{n-2} \neq 0$, then, as in the proof of Theorem~\ref{thm-Miyaoka-ineq-nonuniruled}, we consider the Harder--Narasimhan filtration with respect to $c_1(K_X) \cdot \beta_{\varepsilon}^{n-2}$.
 The same argument applies, and we omit the details.
\end{proof}

\begin{proof}[Proof of Corollary~\ref{cor-MY-canonical}]
The result follows from \cite[Theorem 1.1]{GKPT19b} and Theorem~\ref{thm-Miyaoka-klt-kahler} by the same argument used in the proof of Corollary~\ref{cor-MY-anti-canonical}.
\end{proof}

Finally, in view of potential applications, we present a generalization of \cite[Theorem 7.11]{CHP16} and \cite[Proposition 3.12]{DO23}.

\begin{prop}[{cf.~ \cite[Theorem 7.11]{CHP16}, \cite[Proposition 3.12]{DO23}}]
Let $X$ be a compact normal analytic variety in Fujiki's class with quotient singularities in codimension $2$.
Let $\mathcal{E}$ be a reflexive sheaf of rank $r$, $\beta$ be a nef class, and $\alpha$ be a nef and big class.
Assume that, for sufficiently small $\varepsilon > 0$, the following conditions hold:
\begin{enumerate}[label=$(\arabic*)$]
    \item $\mu_{(\beta + \varepsilon \alpha)^{n-1}}^{\min}(\mathcal{E}) \ge 0$,
    \item $ \beta^2 \cdot (\beta + \varepsilon \alpha)^{n-2} \ge \widehat{c}_1(\mathcal{E}) \cdot \beta \cdot (\beta + \varepsilon \alpha)^{n-2}$.
\end{enumerate}
Then the following inequality holds:
\[
2\widehat{c}_2(\mathcal{E}) \cdot \beta^{n-2}
\ge 
\left(\widehat{c}_1(\mathcal{E})^2 - \widehat{c}_1(\mathcal{E}) \cdot \beta\right) \cdot \beta^{n-2}.
\]
In particular, if $\widehat{c}_1(\mathcal{E})^2 \cdot \beta^{n-2} \ge 0$ and $\beta^{n}=0$, then $\widehat{c}_2(\mathcal{E}) \cdot \beta^{n-2} \ge 0$.
\end{prop}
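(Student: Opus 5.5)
Fix a small $\varepsilon>0$ and put $\beta_\varepsilon := \beta+\varepsilon\alpha$, which is nef and big. Apply Proposition~\ref{prop-BGinequality-bigclass} to $\mathcal{E}$ with $\polar := \beta_\varepsilon^{n-2}$ (so $\alpha_i=\beta_\varepsilon$ in Setup~\ref{setup-Langer-ineq}) and with $\beta_\varepsilon$ playing the role of $\beta$. Then $\beta_\varepsilon^2\cdot\polar=\beta_\varepsilon^n>0$, and hypothesis (1) says $\mu^{\min}_{\beta_\varepsilon\cdot\polar}(\mathcal{E})\ge0$, so Proposition~\ref{prop-BGinequality-bigclass}(2) applies. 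Since $\tfrac1{r_1}-\tfrac1r\le 1-\tfrac1r$, it yields
\[
2\widehat{c}_2(\mathcal{E})\cdot\beta_\varepsilon^{n-2}\ge \Bigl(1-\frac1r\Bigr)\Bigl(\widehat{c}_1(\mathcal{E})^2\cdot\beta_\varepsilon^{n-2}-\frac{(\widehat{c}_1(\mathcal{E})\cdot\beta_\varepsilon^{n-1})^2}{\beta_\varepsilon^n}\Bigr)-\frac1r\,\frac{(\widehat{c}_1(\mathcal{E})\cdot\beta_\varepsilon^{n-1})^2}{\beta_\varepsilon^n}.
\]
The plan is to estimate the right-hand side from below by $(\widehat{c}_1(\mathcal{E})^2-\widehat{c}_1(\mathcal{E})\cdot\beta)\cdot\beta_\varepsilon^{n-2}$ plus terms that vanish as $\varepsilon\to0$, and then let $\varepsilon\to0$. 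All intersection numbers are polynomials in $\varepsilon$, so the limit is harmless.

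Write $c:=\widehat{c}_1(\mathcal{E})$ and $x:=c\cdot\beta_\varepsilon^{n-1}\ge0$; the sign holds by (1), since $\mu\ge\mu^{\min}$. The key quantity is $x^2/\beta_\varepsilon^n$. Expanding $\beta_\varepsilon=\beta+\varepsilon\alpha$ gives
\[
x=c\cdot\beta\cdot\beta_\varepsilon^{n-2}+\varepsilon\, c\cdot\alpha\cdot\beta_\varepsilon^{n-2}, \qquad \beta_\varepsilon^n=\beta^2\cdot\beta_\varepsilon^{n-2}+\varepsilon\,\beta\cdot\alpha\cdot\beta_\varepsilon^{n-2}+\dots .
\]
Hypothesis (2) bounds $c\cdot\beta\cdot\beta_\varepsilon^{n-2}$ above by $\beta^2\cdot\beta_\varepsilon^{n-2}\le\beta_\varepsilon^n$, up to $O(\varepsilon)$ corrections involving $c\cdot\alpha\cdot\beta_\varepsilon^{n-2}$. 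So the target is
\[
\frac{x^2}{\beta_\varepsilon^n}\le x+o(1)=c\cdot\beta\cdot\beta_\varepsilon^{n-2}+o(1),
\]
which would follow from $x\le\beta_\varepsilon^n+o(1)$ together with $x\ge0$, because $x^2/\beta_\varepsilon^n\le x$ whenever $0\le x\le\beta_\varepsilon^n$. Substituting this, and using the Hodge index theorem (Proposition~\ref{prop-Hodge-index}(1)) to see that the bracket in the displayed inequality is $\le0$ and hence that dropping its $1/r$ part only lowers the right-hand side, we get $2\widehat{c}_2\cdot\beta_\varepsilon^{n-2}\ge (c^2-c\cdot\beta)\cdot\beta_\varepsilon^{n-2}+o(1)$. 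Letting $\varepsilon\to0$ gives the claim.

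The main obstacle is controlling the error terms uniformly. Each one carries an explicit factor $\varepsilon$ multiplied by a fixed intersection number $c\cdot\alpha\cdots$. The danger is when $\beta_\varepsilon^n\to0$, i.e.\ $\beta^n=0$: then the ratio $x^2/\beta_\varepsilon^n$ could blow up. I would handle this by splitting into two cases. If $\beta^n>0$, everything is continuous. If $\beta^n=0$, I would first use $0\le x$ and hypothesis (2) in the sharper form $x\le\beta_\varepsilon\cdot\beta\cdot\beta_\varepsilon^{n-2}+\varepsilon(c-\beta)\cdot\alpha\cdot\beta_\varepsilon^{n-2}$, and compare the orders in $\varepsilon$ of numerator and denominator. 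If this comparison fails, I would instead apply Proposition~\ref{prop-BGinequality-bigclass}(1) directly, bounding $(\mu^{\max}-\mu)(\mu-\mu^{\min})\le\mu\cdot\mu^{\max}$ and then $\mu^{\max}$ via (2).

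For the final assertion: when $\beta^n=0$ we have $c\cdot\beta\cdot\beta^{n-2}\le\beta^n=0$ by (2) in the limit, and $c\cdot\beta^{n-1}\ge0$ by (1) in the limit. Hence $c\cdot\beta^{n-1}=0$, and $2\widehat{c}_2\cdot\beta^{n-2}\ge c^2\cdot\beta^{n-2}\ge0$.
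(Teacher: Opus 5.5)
Your route differs from the paper's. You feed the nef and big class $\beta_\varepsilon$ into Proposition~\ref{prop-BGinequality-bigclass}(2), so hypothesis (1) is used verbatim. The paper instead uses $\beta$ itself with $\Omega=\beta_\varepsilon^{n-2}$, i.e.\ the mixed polarization $\beta\cdot\beta_\varepsilon^{n-2}$. As written, your argument does not close, for two reasons.

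First, a bookkeeping slip. Proposition~\ref{prop-BGinequality-bigclass}(2) together with $\frac1{r_1}-\frac1r\le 1-\frac1r$ gives $2\widehat{c}_2(\mathcal{E})\cdot\beta_\varepsilon^{n-2}\ge(1-\frac1r)\bigl(c^2\cdot\beta_\varepsilon^{n-2}-x^2/\beta_\varepsilon^n\bigr)$. Your display carries an extra $-\frac1r x^2/\beta_\varepsilon^n$. With it, the manipulation you describe ends at $c^2\cdot\beta_\varepsilon^{n-2}-(1+\frac1r)x^2/\beta_\varepsilon^n$, which falls short of the claim (e.g.\ when $c\cdot\beta^{n-1}=\beta^n>0$). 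With the correct bound, the case $\beta^n>0$ is fine by continuity.

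Second, and this is the real gap, the case $\beta^n=0$ is not proved; it is exactly the case of the ``in particular'' clause. There $\beta_\varepsilon^n\to 0$, and you need $x^2/\beta_\varepsilon^n\to 0$. Your ``sharper form'' of (2) leaves the term $\varepsilon\, c\cdot\alpha\cdot\beta_\varepsilon^{n-2}$, which nothing in the proposal bounds by $O(\beta_\varepsilon^n)$. The fallback through Proposition~\ref{prop-BGinequality-bigclass}(1) cannot help. Hypothesis (2) concerns only $\widehat{c}_1(\mathcal{E})$ and gives no control of $\mu^{\max}$, and the available bound $\mu^{\max}\le x/r_1$ just returns you to $x^2/\beta_\varepsilon^n$.

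What is missing is an order-of-vanishing argument. Put $m_j:=\beta^j\cdot\alpha^{n-j}\ge 0$, $c_j:=c\cdot\beta^j\cdot\alpha^{n-1-j}$ and $k:=\max\{j : m_j>0\}\le n-1$. Then
\[
(c-\beta)\cdot\beta\cdot\beta_\varepsilon^{n-2}=\sum_{j=1}^{n-1}\binom{n-2}{j-1}\varepsilon^{n-1-j}\,(c_j-m_{j+1}),
\qquad
x=\sum_{j=0}^{n-1}\binom{n-1}{j}\varepsilon^{n-1-j}\,c_j .
\]
Hypothesis (2) says the first polynomial is $\le 0$, and (1) says $x\ge 0$, for all small $\varepsilon$. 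Since $m_{j+1}=0$ for $j\ge k$, comparing lowest-order coefficients downward from $j=n-1$ forces $c_j=0$ for every $j\ge\max(k,1)$. Hence $x=O(\varepsilon^{n-\max(k,1)})$, while $\beta_\varepsilon^n\ge\binom{n}{k}m_k\varepsilon^{n-k}$, so $x^2/\beta_\varepsilon^n\to 0$. The sole exception, $n=2$ and $k=0$, forces $\beta\equiv 0$ by the Hodge index theorem, a case the statement must implicitly exclude anyway. With this step your proof goes through.

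The paper avoids all of this. With the nef class $\beta$ and $\Omega=\beta_\varepsilon^{n-2}$, hypothesis (2) together with $c\cdot\beta\cdot\Omega\ge 0$ gives $(c\cdot\beta\cdot\Omega)^2/(\beta^2\cdot\Omega)\le c\cdot\beta\cdot\Omega$ exactly, for each $\varepsilon$, with no error terms. The price is twofold: transferring (1) from $\beta_\varepsilon^{n-1}$ to $\beta\cdot\beta_\varepsilon^{n-2}$, via the stabilization and polynomial argument in the proof of Theorem~\ref{thm-Miyaoka-klt-kahler}; and handling $\beta^2\cdot\beta_\varepsilon^{n-2}=0$ separately with Proposition~\ref{prop-BGinequality-bigclass}(3) and the Hodge index theorem. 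Your repaired route needs neither, at the cost of the bookkeeping above.
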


\begin{proof}
Set $\beta_{\varepsilon} := \beta + \varepsilon \alpha$. As in the proof of Theorem~\ref{thm-Miyaoka-klt-kahler}, we have
$\mu^{\min}_{\beta \cdot \beta_{\varepsilon}^{n-2}}(\mathcal{E}) \ge 0$ for sufficiently small $\varepsilon > 0$.
Assume first that $\beta^2 \cdot \beta_{\varepsilon}^{n-2} > 0$ for sufficiently small $\varepsilon > 0$. Then, by Proposition~\ref{prop-BGinequality-bigclass} (2), we obtain
\begin{equation}
\label{eq-CHPDO-1}
(2\widehat{c}_2(\mathcal{E}) - \widehat{c}_1(\mathcal{E})^2) \cdot \beta_{\varepsilon}^{n-2}
\ge
\frac{1}{r} \left(\frac{(\widehat{c}_1(\mathcal{E}) \cdot \beta \cdot \beta_{\varepsilon}^{n-2})^2}{\beta^2 \cdot \beta_{\varepsilon}^{n-2}} -\widehat{c}_1(\mathcal{E})^2 \cdot \beta_{\varepsilon}^{n-2}\right)
- 
\frac{(\widehat{c}_1(\mathcal{E}) \cdot \beta \cdot \beta_{\varepsilon}^{n-2})^2}{\beta^2 \cdot \beta_{\varepsilon}^{n-2}}.
\end{equation}
(Note that in Proposition~\ref{prop-BGinequality-bigclass} (2), we have $r_1 \ge 1$.) By applying the Hodge index theorem (see Proposition~\ref{prop-Hodge-index}) and using assumption (2), we obtain
\[
\frac{(\widehat{c}_1(\mathcal{E}) \cdot \beta \cdot \beta_{\varepsilon}^{n-2})^2}{\beta^2 \cdot \beta_{\varepsilon}^{n-2}} 
\underset{\text{(Prop. \ref{prop-Hodge-index})}}{\ge}
\widehat{c}_1(\mathcal{E})^2 \cdot \beta_{\varepsilon}^{n-2} 
\quad
\text{and}
\quad
\widehat{c}_1(\mathcal{E}) \cdot \beta \cdot \beta_{\varepsilon}^{n-2}
\underset{(2)}{\ge}
\frac{(\widehat{c}_1(\mathcal{E}) \cdot \beta \cdot \beta_{\varepsilon}^{n-2})^2}{\beta^2 \cdot \beta_{\varepsilon}^{n-2}},
\]
which together yield the desired inequality from \eqref{eq-CHPDO-1} by taking the limit as $\varepsilon \to 0$.

Now assume instead that $\beta^2 \cdot \beta_{\varepsilon_k}^{n-2} = 0$ for some sequence $\{\varepsilon_k\}_{k=1}^{\infty}$ of positive numbers with $\varepsilon_k \to 0$ as $k \to \infty$. Then, from assumption (2), we obtain
$0 \ge \widehat{c}_1(\mathcal{E}) \cdot \beta \cdot \beta_{\varepsilon_k}^{n-2}$, and thus assumption (1) implies
\begin{equation}
\label{eq-CHPDO-lemma}
\widehat{c}_1(\mathcal{E}) \cdot \beta \cdot \beta_{\varepsilon_k}^{n-2} = 0.
\end{equation}
By the Hodge index theorem, it follows that
$\widehat{c}_1(\mathcal{E})^2 \cdot \beta_{\varepsilon_k}^{n-2} \le 0$. Therefore, by applying Proposition~\ref{prop-BGinequality-bigclass} (3), we get
\[
(2\widehat{c}_2(\mathcal{E}) - \widehat{c}_1(\mathcal{E})^2) \cdot \beta_{\varepsilon_k}^{n-2}
\underalign{\text{(Prop. \ref{prop-BGinequality-bigclass})}}{\ge} 
 - \frac{1}{r}  \widehat{c}_1(\mathcal{E})^2 \cdot \beta_{\varepsilon_k}^{n-2}
\ge 0.
\]
The desired inequality then follows from \eqref{eq-CHPDO-lemma} by letting $k \to \infty$.
\end{proof}

\section{Further remarks}
\label{sec-further-discussion}
\subsection{The case where the canonical divisor is psef}

We are interested in whether Miyaoka's inequality (cf.~Theorems~\ref{thm-Miyaoka-ineq-nonuniruled}--\ref{thm-nef-anti-canonical}) can be extended to the case where $K_X$ or $-K_X$ is psef. However, even if $-K_X$ is psef, the generic nefness theorem (cf.~Proposition~\ref{prop-genericnef--antinef}) does not necessarily hold. Indeed, as noted in \cite[Section 1]{Ou17}, let $C$ be a curve of genus $g \ge 2$, and let $L$ be a line bundle on $C$ of degree less than $2 - 3g$. Consider the projective bundle
\(
X := \mathbb{P}(\mathcal{O}_C \oplus L).
\)
In this case, $-K_X$ is psef, but $\mathcal{T}_X$ admits the pull-back of $\mathcal{T}_C$ as a quotient sheaf. Thus, $\mathcal{T}_X$ is not generically nef.

On the other hand, if $K_X$ is psef, then the generic nefness theorem (cf.~Proposition~\ref{prop-genericnef--nonuniruled}) does hold, allowing us to pursue results in this setting. In fact, in the surface case, Langer \cite{Lan03} proved the following inequality:
\begin{thm}\cite[Theorem 0.1]{Lan03}
Let $(X, D)$ be a normal log canonical pair with $X$ a projective surface. If $K_X + D$ is psef, then
\[
3c_2(X, D) \ge c_1(K_X + D)^2.
\]
Moreover, if equality holds, then $K_X + D$ is nef.
\end{thm}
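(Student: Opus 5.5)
The plan is to reduce to the nef situation via the Zariski decomposition, and then run the Miyaoka-type argument of the proof of Theorem~\ref{thm-Miyaoka-ineq-nonuniruled} with respect to the nef part. Since $K_X+D$ is psef on the normal surface $X$, it has a (Mumford) Zariski decomposition
\[
K_X+D = P + N ,
\]
with $P$ a nef $\mathbb{Q}$-divisor, $N\ge 0$ with negative definite intersection matrix, and $P\cdot N=0$. Consequently
\[
c_1(K_X+D)^2 = P^2+N^2 \le P^2 ,
\]
with equality if and only if $N=0$, that is, if and only if $K_X+D$ is nef. So both assertions follow once I prove
\[
3c_2(X,D)\ \ge\ P^2 ,
\]
because equality in the theorem then forces $N^2=0$, hence $N=0$.

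To prove $3c_2(X,D)\ge P^2$, I would work with the logarithmic (orbifold) cotangent sheaf $\mathcal{E}:=\Omega^{[1]}_X(\log D)$, whose determinant is $K_X+D$. The polarization is the nef class $P$ in place of $c_1(K_X)\cdot\polar$; here $n=2$, so $\polar$ is empty. The argument has three steps.
\begin{enumerate}[label=$(\arabic*)$]
\item \textbf{Generic nefness.} Every torsion-free quotient $\mathcal{Q}$ of $\mathcal{E}$ satisfies $c_1(\mathcal{Q})\cdot P\ge 0$. This is the log version of Miyaoka's generic semipositivity, playing the role of Proposition~\ref{prop-genericnef--nonuniruled}. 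I would deduce it from the pseudo-effectivity of $K_X+D$ through the Bogomolov--McQuillan criterion, used as in Proposition~\ref{prop-CP-foliation}: a $P$-destabilizing subsheaf of $\mathcal{T}_X(-\log D)$ of positive slope would be a foliation by rational curves meeting $D$ in at most one point, which contradicts $K_X+D$ psef.
\item \textbf{Harder--Narasimhan argument.} I take the Harder--Narasimhan filtration of $\mathcal{E}$ with respect to $P$. I attach the Higgs field of the proof of Theorem~\ref{thm-Miyaoka-ineq-nonuniruled} to $\mathcal{G}_1\oplus\mathcal{O}_X$ and apply the Bogomolov--Gieseker inequalities of Corollary~\ref{cor-BGinequality-mixed}. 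The computation of \eqref{eq-Miyaoka-4}--\eqref{eq-Miyaoka-5} then goes through verbatim, provided each $\widehat{c}_1(\mathcal{G}_i)^2$ is bounded via Proposition~\ref{prop-Hodge-index} with $\beta=P$. Since $\mu=c_1(\mathcal{E})\cdot P/2=P^2/2$, the output is $3c_2(X,D)\ge (c_1(\mathcal{E})\cdot P)^2/P^2=P^2$ when $P^2>0$.
\item \textbf{The case $P^2=0$.} Here $c_1(\mathcal{E})\cdot P=0$, and Proposition~\ref{prop-BGinequality-bigclass}~(3), or Corollary~\ref{cor-semipositivity-c2}~(2) if $P\equiv 0$, gives $c_2(X,D)\ge \tfrac14 (K_X+D)^2$. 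I then use $(K_X+D)^2=N^2\le 0$, so $3c_2(X,D)\ge 0=P^2$ follows as soon as $c_2(X,D)\ge 0$. If only the weaker bound comes out, I would instead redo the Langer computation directly against $N^2\le 0$.
\end{enumerate}

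The main obstacle is that $(X,D)$ is only log canonical, not klt. At strictly lc points the pair is not locally an orbifold quotient, so the orbifold machinery of Section~3 (Propositions~\ref{prop-exact-sequence} and \ref{prop-Hodge-index}, Corollary~\ref{cor-BGinequality-mixed}) does not apply verbatim. The invariant $c_2(X,D)$ is then Langer's orbifold Euler number, which carries local correction terms at these points. My first attempt would be to pass to a suitable partial resolution: the minimal log resolution over the non-klt points, extracting the lc centres with coefficient one into the boundary. On it the pair becomes a log smooth orbifold pair with reduced boundary over those points. I would run the argument there, using Deligne--Mumford/Kawamata-type log Bogomolov inequalities for $\Omega(\log)$, and check that $c_2$ of the logarithmic orbi-bundle computes $c_2(X,D)$ while $P$ pulls back to a nef class with the same square. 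Verifying this local compatibility at cusps and simple elliptic singularities is where I expect the real work.
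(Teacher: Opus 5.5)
The paper gives no proof of this statement; it is quoted from \cite{Lan03} in Section~\ref{sec-further-discussion}. Your argument therefore has to stand on its own, and its central reduction fails: the inequality $3c_2(X,D)\ge P^2$ that you reduce to is false.

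Take $X'=E\times B$ with $E$ an elliptic curve and $g(B)\ge 2$. Let $f\colon X\to X'$ be the blow-up of a point with exceptional curve $C$, and put $D:=C$.
\begin{itemize}
\item The pair $(X,D)$ is log smooth, so it is log canonical.
\item $K_X+D=f^*K_{X'}+2C$ has Zariski decomposition $P=f^*K_{X'}$ (nef, $P\not\equiv 0$, $P^2=K_{X'}^2=0$) and $N=2C$.
\item $c_2(X,D)=e(X)-e(C)=1-2=-1$.
\end{itemize}
Hence $3c_2(X,D)=-3<0=P^2$, while the theorem itself holds because $(K_X+D)^2=N^2=-4$. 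If you replace $X'$ by a smooth ball quotient, the same construction gives $3c_2(X,D)=3e(X')-3=P^2-3<P^2$ with $P^2>0$. So the bound $c_2(X,D)\ge 0$ hoped for in your step~(3) is false, and so is the output claimed in step~(2).

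The slip in step~(2) is in the last term of the first line of \eqref{eq-Miyaoka-4}. That term is $\widehat{c}_1(\mathcal{E})^2=(K_X+D)^2=P^2+N^2$, not $(c_1(\mathcal{E})\cdot P)^2/P^2=P^2$. In the paper the two agree because the polarization is $c_1(\mathcal{E})$ itself. With $\beta=P$, the Hodge index theorem only gives $\widehat{c}_1(\mathcal{E})^2\le (c_1(\mathcal{E})\cdot P)^2/P^2$, which is the wrong direction for a term entering with a plus sign.

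If you carry $N^2$ through honestly, the computation gives only
\[
3c_2(X,D)\ge P^2+\tfrac32 N^2=(K_X+D)^2+\tfrac12 N^2,
\]
and this already happens when $\mathcal{E}$ is $P$-semistable. That is exactly the weaker Rousseau--Taji-type bound recorded in Section~\ref{sec-further-discussion}, and it does not imply the theorem since $N^2\le 0$.

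Aiming at $(K_X+D)^2$ rather than $P^2$ rescues part of your plan.
\begin{itemize}
\item If $P^2=0$ and $P\not\equiv 0$, your bound $c_2(X,D)\ge \frac14 N^2$ gives $3c_2(X,D)\ge \frac34 N^2\ge N^2$, with strict inequality when $N\neq 0$.
\item If $\mathcal{E}$ is $P$-semistable, the Higgs Bogomolov--Gieseker inequality for $\mathcal{E}\oplus\mathcal{O}_X$ gives $3c_2(X,D)\ge (K_X+D)^2$ directly, with no Hodge index step.
\end{itemize}

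When $\mathcal{E}$ has a $P$-destabilizing line subsheaf $L_1$, however, the term $L_1\cdot N$ is not controlled by $P$-slopes, and neither is the strictness needed for the equality statement. An ingredient that handles $N$ directly is missing. One possible route, which you would have to verify by a local computation, is to reduce to the nef case along the log MMP. You would then check that $3c_2-(K+D)^2$ on the source strictly exceeds the same quantity on the contracted pair. For the contraction of a $(-1)$-curve carrying coefficient $a$, the excess is $(a-2)^2>0$.

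Finally, as you note yourself, at strictly lc points and for non-standard coefficients $c_2(X,D)$ is Langer's orbifold Euler number. The orbi-bundle machinery of Section~3 does not apply there, and your last paragraph names this problem without solving it.
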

In higher dimensions, Rousseau and Taji \cite[Theorem 1.2 and Subsection 4.3]{RT22} considered a log smooth pair $(X, D)$ with $K_X + D$ psef. They proved that there exists a decomposition $K_X + D= P + N$ such that $P \cdot N \cdot H^{n-2} = 0$ for some ample divisor $H$, and established the following version of Miyaoka's inequality:
\[
\left(3\widehat{c}_2(X,D) - \widehat{c}_1(X,D)^2\right) \cdot H^{n-2}
\ge \frac{1}{2} N^2 \cdot H^{n-2}.
\]
However, the decomposition $K_X + D= P + N$ in this context is not necessarily the divisorial Zariski decomposition of $K_X$. This leads to the following natural question:

\begin{ques}
Can the results of \cite{Lan03} and \cite{RT22} be extended to higher dimensions using the divisorial Zariski decomposition \( K_X = P(K_X) + N(K_X) \)?
\end{ques}

In fact, we can obtain a similar inequality in the following setting.
Let $X$ be a non-uniruled klt K\"ahler variety with $K_X$ psef. If the positive part $P(K_X)$ of $K_X$ is nef and satisfies $P(K_X)^{n-1} \neq 0$, then
\[
\left(3\widehat{c}_2(X) - \widehat{c}_1(X)^2\right) \cdot P(K_X)^{n-2}
\ge \frac{1}{2} N(K_X)^2 \cdot P(K_X)^{n-2}
\]
holds. This follows by considering the Harder--Narasimhan filtration with respect to $P(K_X)^{n-1}$ instead of $c_1(K_X) \cdot \alpha_1 \cdots \alpha_{n-2}$ in Theorem~\ref{thm-Miyaoka-ineq-nonuniruled}, and applying the same argument. However, this result remains unsatisfactory.
In addition, it remains unknown whether Proposition~\ref{prop-exact-sequence} holds for the non-pluripolar product $\langle \alpha_1 \cdots \alpha_{n-2} \rangle$, even in the special case $\langle \alpha^{n-2} \rangle$. 
Therefore, in order to extend the results of Section~\ref{sec-Miyaoka's-inequality} to the psef case, this issue also needs to be resolved.

\subsection{On the equality cases}

\begin{ques}
What is the structure of $X$ when equality holds in Theorems~\ref{thm-Miyaoka-ineq-nonuniruled}--\ref{thm-nef-anti-canonical}?
\end{ques}
When $X$ is a projective klt variety, the structure is already well understood (see \cite{Cao13, Ou17, IM22, IMM24}). If $X$ is a compact K\"ahler manifold, one can apply the arguments from \cite{IM22} and \cite{IMM24} directly to obtain the corresponding structure theorem.

\begin{thm}
\label{thm-main-structure}
Let $X$ be a compact K\"ahler manifold and let $\omega_1, \ldots, \omega_{n-2}$ be K\"ahler forms. If $K_X$ is nef and
$$
\left( 3c_2(X) - c_1(X)^2 \right) \cdot \{\omega_1\} \cdots \{\omega_{n-2}\} = 0,
$$
then the canonical divisor $K_X$ is semi-ample, and $\nu(K_X) = \kappa(K_X)$ is either $0$, $1$, or $2$. Moreover, up to a finite \'{e}tale cover of $X$, one of the following holds depending on the Kodaira dimension:
\begin{itemize}
\item[$(i)$] If $\nu(K_X) = \kappa(K_X) = 0$, then $X$ is isomorphic to a complex torus.

\item[$(ii)$] If $\nu(K_X) = \kappa(K_X) = 1$, then $X$ admits a smooth torus fibration $X \rightarrow C$ over a curve $C$ of genus $\ge 2$.

\item[$(iii)$] If $\nu(K_X) = \kappa(K_X) = 2$, then $X$ is isomorphic to the product $A \times S$, where $A$ is a complex torus and $S$ is a smooth projective surface whose universal cover is an open ball in $\mathbb{C}^2$.
\end{itemize}
\end{thm}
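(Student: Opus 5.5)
The plan is to follow the strategy of \cite{IM22} and \cite{IMM24}, replacing ample classes by the K\"ahler classes $\{\omega_i\}$ and using the results of Section~\ref{sec-Miyaoka's-inequality}. Set $\Omega := \{\omega_1\}\cdots\{\omega_{n-2}\}$.

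\textbf{Step 1: exploit equality in the Miyaoka chain.} Since $K_X$ is nef, $K_X$ is psef. As $X$ is smooth, it is canonical, so Proposition~\ref{prop-genericnef--nonuniruled} shows that $X$ is non-uniruled and that $\mu^{\min}_{c_1(K_X)\cdot\Omega}(\Omega_X^1)\ge 0$. I will go back through the proof of Theorem~\ref{thm-Miyaoka-ineq-nonuniruled} and track when equality is possible.

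If $c_1(K_X)^2\cdot\Omega>0$, then the chain \eqref{eq-Miyaoka-4}--\eqref{eq-Miyaoka-5} gives a strict inequality when $r_1=1$. When $r_1\ge2$, equality forces $r_1=2$ and $\mu_1=\frac{n}{2}\mu$. It also forces equality in \eqref{eq-Miyaoka-1}, which gives $\mu_i=\mu_1$ or $r_i\mu_i=0$. So $\mathcal{G}_1$ has rank $2$ and carries all of $c_1(K_X)\cdot\Omega$, and the other graded pieces have slope $0$. Equality in the Hodge index step (Proposition~\ref{prop-Hodge-index}(3)) then gives $c_1(\mathcal{G}_i)\cdot\Omega\equiv0$ for $i\ge2$ and $c_1(\mathcal{G}_1)\cdot\Omega\equiv c_1(K_X)\cdot\Omega$. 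Equality in the Bogomolov--Gieseker inequality also holds for the stable Higgs sheaf $\mathcal{G}_1\oplus\mathcal{O}_X$ and for the flat pieces $\mathcal{G}_i$, $i\ge2$.

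If $c_1(K_X)^2\cdot\Omega=0$, we get instead $c_2(X)\cdot\Omega=0$. Then equality holds in Corollary~\ref{cor-semipositivity-c2}(2).

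\textbf{Step 2: uniformize via Higgs bundles.} Equality in the orbifold Bogomolov--Gieseker inequality of Lemma~\ref{lem-orbifold-version-CW24} should give projectively flat Hermite--Einstein metrics. Here $X$ is smooth, so no orbifold structure is needed. Concretely:
\begin{itemize}
\item the pieces $\mathcal{G}_i$, $i\ge2$, should be flat hermitian bundles, and the filtration should split after a finite \'etale cover;
\item the system of Hodge bundles $\mathcal{G}_1\oplus\mathcal{O}_X$ should induce a variation of Hodge structure of ball type.
\end{itemize}
As in \cite{IMM24}, this should yield, after a finite \'etale cover, a splitting $\Omega^1_X\cong \mathcal{G}_1\oplus\mathcal{O}_X^{\oplus(n-2)}$, or a flat version of it. The $\mathcal{O}_X$ summands give holomorphic $1$-forms. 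The Albanese map should then give a smooth torus fibration whose base $S$ has dimension $\le 2$ and $\Omega^1_S$ equal to $\mathcal{G}_1$. The base is a ball quotient surface in case $(iii)$, a curve of genus $\ge 2$ in case $(ii)$, and a point in case $(i)$, where $c_1=c_2=0$ and the Beauville--Bogomolov decomposition together with non-uniruledness gives a torus. Semi-ampleness of $K_X$ then follows because $K_X$ is pulled back from the canonically polarized base.

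\textbf{Main obstacle.} The hardest step is passing from equality in the Higgs Bogomolov--Gieseker inequality for the mixed polarization $c_1(K_X)\cdot\Omega$ to an actual flat or uniformizing structure. The classes $c_1(K_X)+\varepsilon\omega$ are only K\"ahler after perturbation, and the stability from Corollary~\ref{cor-BGinequality-mixed} is obtained only in a limit. My first attempt will be to show that equality at $\varepsilon=0$ forces equality in the Hermite--Einstein estimates along the sequence $\varepsilon\to0$. Failing that, I will use the K\"ahler generalization of the characterization of projectively flat bundles, via Simpson's correspondence for the K\"ahler classes $\{\omega_i\}$ themselves, in the same way \cite{IMM24} uses stability with respect to ample classes.
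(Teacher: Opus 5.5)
Your overall route is the one the paper intends. Its proof is only the remark that the arguments of \cite{IM22} and \cite{IMM24} carry over once three inputs are available: generic nefness (\cite{Ou25}, \cite{CP25}), the mixed Hodge index theorem (Proposition~\ref{prop-Hodge-index}), and the description of Bogomolov--Gieseker equality in \cite{CW24}. Your Step~1 bookkeeping is also correct. When $c_1(K_X)^2\cdot\Omega>0$, equality forces $r_1=2$ and $\mu_1=\frac{n}{2}\mu$, hence $\mu_i=0$ for $i\ge 2$ and $l\le 2$, $c_1(\mathcal{G}_1)\cdot\Omega\equiv c_1(K_X)\cdot\Omega$, $c_1(\mathcal{G}_2)\cdot\Omega\equiv 0$, and equality in every Bogomolov--Gieseker and exact-sequence step.

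Step~2, however, misplaces the difficulty. The obstacle you name does not occur. Since $\Omega$ is already a product of K\"ahler classes, openness of stability (as in the proof of Corollary~\ref{cor-BGinequality-mixed}) makes $(\mathcal{G}_1\oplus\mathcal{O}_X,\theta)$ stable with respect to $\{\omega'\}\cdot\Omega$, for a K\"ahler form $\omega'\in c_1(K_X)+\varepsilon\{\omega\}$ with one \emph{fixed} small $\varepsilon>0$. Lemma~\ref{lem-orbifold-version-CW24} then pairs the discriminant with $\Omega$ itself, so the equality case of \cite{CW24} gives projective flatness directly, with no limit $\varepsilon\to 0$.

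What is actually missing is a mechanism for the slope-zero quotient $\mathcal{G}_2$. It is only semistable for the degenerate class $c_1(K_X)\cdot\Omega$, so calling it flat is not yet justified. This is where generic nefness with respect to genuinely K\"ahler polarizations enters, which is the input the paper singles out:
\begin{enumerate}
\item Proposition~\ref{prop-genericnef--nonuniruled} with $\gamma=\{\omega\}\cdot\Omega$ gives $\mu^{\min}_{\{\omega\}\cdot\Omega}(\Omega^1_X)\ge 0$.
\item Every quotient of $\mathcal{G}_2$ is a quotient of $\Omega^1_X$, and $c_1(\mathcal{G}_2)\cdot\Omega\equiv 0$. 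Hence $\mu^{\min}_{\{\omega\}\cdot\Omega}(\mathcal{G}_2)\ge 0=\mu_{\{\omega\}\cdot\Omega}(\mathcal{G}_2)$, so $\mathcal{G}_2$ is semistable for a K\"ahler polarization.
\item Only now can $c_2(\mathcal{G}_2^{\vee\vee})\cdot\Omega=0$ be fed into \cite{CW24} to give numerical flatness.
\end{enumerate}

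Two further steps fail as written.

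First, the Albanese step is inverted. If $\Omega^1_X\cong\mathcal{G}_1\oplus\mathcal{O}_X^{\oplus(n-2)}$, the $n-2$ one-forms map $X$ onto an $(n-2)$-dimensional torus whose fibres are the \emph{surfaces}. For $X=A\times S$ this map is the projection to $A$, and the Albanese map itself lands in $A\times\operatorname{Alb}(S)$. The torus fibration over $S$ must instead come from the foliation $\mathcal{G}_2^{\vee}\subset\mathcal{T}_X$ annihilating $\mathcal{G}_1$, which is integrable by comparing slopes of $\wedge^2\mathcal{G}_2^{\vee}$ and $\mathcal{G}_1^{\vee}$. You would then need to show its leaves are compact tori, or else use the Iitaka fibration once semi-ampleness is known.

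Second, cases $(i)$ and $(ii)$ do not come out of that splitting at all. Because $\Omega$ is K\"ahler and $K_X$ is nef, $c_1(K_X)^2\cdot\Omega>0$ holds exactly when $\nu(K_X)\ge 2$. So $(i)$ and $(ii)$ lie in your second branch, where:
\begin{itemize}
\item there is no rank-two $\mathcal{G}_1$;
\item $\Omega^1_X$ is just $c_1(K_X)\cdot\Omega$-semistable of slope $0$;
\item Step~1 only gives $c_1(X)^2\cdot\Omega=c_2(X)\cdot\Omega=0$.
\end{itemize}
Getting the torus, or the torus fibration over a curve (in particular abundance for $\nu(K_X)=1$), from this is the content of \cite{IM22}. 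Your sketch names the outcome without supplying that argument.
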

The reason why the arguments in \cite{IM22} and \cite{IMM24} can be extended to the compact K\"ahler setting is that the key tools used in those proofs are also available in the compact K\"ahler case. In particular, the generic nefness theorem has already been established by \cite{Ou25} and \cite{CP25}, and essential ingredients such as the Hodge index theorem for $\{\omega_1\} \cdots \{\omega_{n-2}\}$ have been proven in this paper. Furthermore, the structure of reflexive sheaves that attain equality in the Bogomolov--Gieseker inequality, namely, numerically projectively flat bundles, is known thanks to \cite{CW24}. 

Similarly, it was shown in \cite[Theorem 5.3]{MWWZ25} that a structure theorem analogous to those in \cite{Cao13, Ou17, IM22} holds when \( X \) is a compact K\"ahler manifold and \( -K_X \) is nef.

However, the case where $X$ is a klt K\"ahler variety has not yet been resolved. There seem to be the following obstacles, judging from the proof of \cite{IMM24}. When $-K_X$ is nef, a structure theorem in the spirit of \cite{CH19} and \cite{MW21} is not yet available. When $K_X$ is nef, the corresponding generic nefness theorem for mixed polarizations has not yet been established.

\bibliographystyle{alpha}
\bibliography{ref_Miyaoka_2026.bib}

\begin{thebibliography}{MWWZ25}

\bibitem[BG13]{BG13}
S\'ebastien Boucksom and Vincent Guedj.
\newblock Regularizing properties of the {K}\"ahler-{R}icci flow.
\newblock In {\em An introduction to the {K}\"ahler-{R}icci flow}, volume 2086
  of {\em Lecture Notes in Math.}, pages 189--237. Springer, Cham, 2013.

\bibitem[Cam11]{Cam11}
Fr\'{e}d\'{e}ric Campana.
\newblock Remarks on an example of {K}. {U}eno.
\newblock In {\em Classification of algebraic varieties}, EMS Ser. Congr. Rep.,
  pages 115--121. Eur. Math. Soc., Z\"{u}rich, 2011.

\bibitem[Cao13]{Cao13}
Junyan Cao.
\newblock A remark on compact {K}\"ahler manifolds with nef anticanonical
  bundles and its applications, 2013.
\newblock Preprint. arXiv:1305.4397.

\bibitem[CH19]{CH19}
Junyan Cao and Andreas H\"{o}ring.
\newblock A decomposition theorem for projective manifolds with nef
  anticanonical bundle.
\newblock {\em J. Algebraic Geom.}, 28(3):567--597, 2019.

\bibitem[Che25]{Chen22}
Xuemiao Chen.
\newblock Admissible {Hermitian}-{Yang}-{Mills} connections over normal
  varieties.
\newblock {\em Math. Ann.}, 392(1):487--523, 2025.

\bibitem[CHP16]{CHP16}
Fr\'{e}d\'{e}ric Campana, Andreas H\"{o}ring, and Thomas Peternell.
\newblock Abundance for {K}\"{a}hler threefolds.
\newblock {\em Ann. Sci. \'{E}c. Norm. Sup\'{e}r. (4)}, 49(4):971--1025, 2016.

\bibitem[Cla17]{Cla16}
Beno{\^{\i}}t Claudon.
\newblock Positivity of the logarithmic cotangent and {Shafarevich}-{Viehweg}
  conjecture.
\newblock In {\em S\'eminaire Bourbaki. Volume 2015/2016. Expos\'es 1104--1119.
  Avec table par noms d'auteurs de 1948/49 \`a 2015/16}, pages 27--63, ex.
  Paris: Soci{\'e}t{\'e} Math{\'e}matique de France (SMF), 2017.

\bibitem[CP19]{CP19}
Fr\'{e}d\'{e}ric Campana and Mihai P\u{a}un.
\newblock Foliations with positive slopes and birational stability of orbifold
  cotangent bundles.
\newblock {\em Publ. Math. Inst. Hautes \'{E}tudes Sci.}, 129:1--49, 2019.

\bibitem[CP25]{CP25}
Junyan Cao and Mihai P\u{a}un.
\newblock Remarks on relative canonical bundles and algebraicity criteria for
  foliations in {K}ähler context, 2025.
\newblock Preprint. arXiv:2502.02183.

\bibitem[CW24]{CW24}
Xuemiao Chen and Richard~A. Wentworth.
\newblock The nonabelian {Hodge} correspondence for balanced {Hermitian}
  metrics of {Hodge}-{Riemann} type.
\newblock {\em Math. Res. Lett.}, 31(3):639--654, 2024.

\bibitem[DGP24]{DGP24}
St{\'e}phane Druel, Henri Guenancia, and Mihai P{\u{a}}un.
\newblock A decomposition theorem for {{\(\mathbb{Q}\)}}-{Fano}
  {K{\"a}hler}-{Einstein} varieties.
\newblock {\em C. R., Math., Acad. Sci. Paris}, 362(S1):93--118, 2024.

\bibitem[DH23]{DH23}
Omprokash Das and Christopher Hacon.
\newblock On the minimal model program for {K}\"ahler 3-folds, 2023.
\newblock Preprint. arXiv:2306.11708.

\bibitem[DH25]{DH20}
Omprokash Das and Christopher Hacon.
\newblock The log minimal model program for {K{\"a}hler} 3-folds.
\newblock {\em J. Differ. Geom.}, 130(1):151--207, 2025.

\bibitem[DHP24]{DHP24}
Omprokash Das, Christopher Hacon, and Mihai P{\u{a}}un.
\newblock On the 4-dimensional minimal model program for {K{\"a}hler}
  varieties.
\newblock {\em Adv. Math.}, 443:68, 2024.
\newblock Id/No 109615.

\bibitem[DO23]{DO23}
Omprokash Das and Wenhao Ou.
\newblock On the log abundance for compact {K}\"ahler threefolds ii, 2023.
\newblock Preprint. arXiv:2306.00671. To appear in Proceedings of the London
  Mathematical Society.

\bibitem[Eno88]{Eno88}
Ichiro Enoki.
\newblock Stability and negativity for tangent sheaves of minimal {K}\"ahler
  spaces.
\newblock In {\em Geometry and analysis on manifolds ({K}atata/{K}yoto, 1987)},
  volume 1339 of {\em Lecture Notes in Math.}, pages 118--126. Springer,
  Berlin, 1988.

\bibitem[FGSW26]{FGSW26}
Xin Fu, Bin Guo, Jian Song, and Juanyong Wang.
\newblock Fundamental groups of compact {K}\"{a}hler varieties with nef anti
  canonical bundle, 2026.
\newblock Preprint. arXiv:2602.07420.

\bibitem[GK20]{GK20}
Patrick Graf and Tim Kirschner.
\newblock Finite quotients of three-dimensional complex tori.
\newblock {\em Ann. Inst. Fourier (Grenoble)}, 70(2):881--914, 2020.

\bibitem[GKP16]{GKP16a}
Daniel Greb, Stefan Kebekus, and Thomas Peternell.
\newblock Movable curves and semistable sheaves.
\newblock {\em Int. Math. Res. Not. IMRN}, 2016(2):536--570, 2016.

\bibitem[GKP22]{GKP22}
Daniel Greb, Stefan Kebekus, and Thomas Peternell.
\newblock Projective flatness over klt spaces and uniformisation of varieties
  with nef anti-canonical divisor.
\newblock {\em J. Algebraic Geom.}, 31(3):467--496, 2022.

\bibitem[GKPT19]{GKPT19b}
Daniel Greb, Stefan Kebekus, Thomas Peternell, and Behrouz Taji.
\newblock The {M}iyaoka-{Y}au inequality and uniformisation of canonical
  models.
\newblock {\em Ann. Sci. \'{E}c. Norm. Sup\'{e}r. (4)}, 52(6):1487--1535, 2019.

\bibitem[GP24]{GP24}
Henri Guenancia and Mihai P\u{a}un.
\newblock {B}ogomolov-{G}ieseker inequality for log terminal {K}\"ahler
  threefolds, 2024.
\newblock Preprint. arXiv:2405.10003.

\bibitem[Gue16]{Gue16}
Henri Guenancia.
\newblock Semistability of the tangent sheaf of singular varieties.
\newblock {\em Algebr. Geom.}, 3(5):508--542, 2016.

\bibitem[His24]{Hisa24}
Tomoyuki Hisamoto.
\newblock On the {M}iyaoka-{Y}au inequality for manifolds with nef
  anti-canonical line bundle, 2024.
\newblock Preprint. arXiv:2403.09120.

\bibitem[HP16]{HP16}
Andreas H\"oring and Thomas Peternell.
\newblock Minimal models for {K}\"ahler threefolds.
\newblock {\em Invent. Math.}, 203(1):217--264, 2016.

\bibitem[IJZ25]{IJZ25}
Masataka Iwai, Satoshi Jinnouchi, and Shiyu Zhang.
\newblock The miyaoka-yau inequality for singular varieties with big canonical
  or anticanonical divisors, 2025.
\newblock Preprint. arXiv:2507.08522.

\bibitem[IM22]{IM22}
Masataka Iwai and Shin-ichi Matsumura.
\newblock Abundance theorem for minimal compact {K}\"ahler manifolds with
  vanishing second {C}hern class, 2022.
\newblock Preprint. arXiv:2205.10613.

\bibitem[IMM24]{IMM24}
Masataka Iwai, Shin-ichi Matsumura, and Niklas M\"uller.
\newblock Minimal projective varieties satisfying {M}iyaoka's equality, 2024.
\newblock Preprint. arXiv:2404.07568 To appear in Proceedings of the London
  Mathematical Society.

\bibitem[Kaw92]{Kaw92}
Yujiro Kawamata.
\newblock Abundance theorem for minimal threefolds.
\newblock {\em Invent. Math.}, 108(2):229--246, 1992.

\bibitem[KM98]{KM98}
J\'{a}nos Koll\'{a}r and Shigefumi Mori.
\newblock {\em Birational geometry of algebraic varieties}, volume 134 of {\em
  Cambridge Tracts in Mathematics}.
\newblock Cambridge University Press, Cambridge, 1998.
\newblock With the collaboration of C. H. Clemens and A. Corti, Translated from
  the 1998 Japanese original.

\bibitem[KO25]{KO25}
János Kollár and Wenhao Ou.
\newblock Orbifold modifications of complex analytic spaces, 2025.
\newblock Preprint. arXiv:2512.20708.

\bibitem[Kob14]{Kob14}
Shoshichi Kobayashi.
\newblock {\em Differential geometry of complex vector bundles}.
\newblock Princeton Legacy Library. Princeton University Press, Princeton, NJ,
  [2014].
\newblock Reprint of the 1987 edition [ MR0909698].

\bibitem[Lan03]{Lan03}
Adrian Langer.
\newblock Logarithmic orbifold {Euler} numbers of surfaces with applications.
\newblock {\em Proc. Lond. Math. Soc. (3)}, 86(2):358--396, 2003.

\bibitem[Lan04]{Lan04}
Adrian Langer.
\newblock Semistable sheaves in positive characteristic.
\newblock {\em Ann. of Math. (2)}, 159(1):251--276, 2004.

\bibitem[Ma05]{ma2005orbifolds}
Xiaonan Ma.
\newblock Orbifolds and analytic torsions.
\newblock {\em Transactions of the American Mathematical Society},
  357(6):2205--2233, 2005.

\bibitem[Miy77]{Miy77}
Yoichi Miyaoka.
\newblock On the {C}hern numbers of surfaces of general type.
\newblock {\em Invent. Math.}, 42:225--237, 1977.

\bibitem[Miy87]{Miy87}
Yoichi Miyaoka.
\newblock The {C}hern classes and {K}odaira dimension of a minimal variety.
\newblock In {\em Algebraic geometry, {S}endai, 1985}, volume~10 of {\em Adv.
  Stud. Pure Math.}, pages 449--476. North-Holland, Amsterdam, 1987.

\bibitem[Miy88a]{Miy88b}
Yoichi Miyaoka.
\newblock Abundance conjecture for {$3$}-folds: case {$\nu=1$}.
\newblock {\em Compositio Math.}, 68(2):203--220, 1988.

\bibitem[Miy88b]{Miy88a}
Yoichi Miyaoka.
\newblock On the {K}odaira dimension of minimal threefolds.
\newblock {\em Math. Ann.}, 281(2):325--332, 1988.

\bibitem[MTTW25]{ma2025superconnection}
Qiaochu Ma, Xiang Tang, Hsian-Hua Tseng, and Zhaoting Wei.
\newblock Superconnection and orbifold {C}hern character, 2025.
\newblock Preprint. arXiv:2505.13912.

\bibitem[MW21]{MW21}
Shin-ichi Matsumura and Juanyong Wang.
\newblock Structure theorem for projective klt pairs with nef anti-canonical
  divisor, 2021.
\newblock Preprint. arXiv: 2105.14308. To appear in Journal of European
  Mathematical Society.

\bibitem[MWWZ25]{MWWZ25}
Shin-ichi Matsumura, Juanyong Wang, Xiaojun Wu, and Qimin Zhang.
\newblock Compact {K}\"ahler manifolds with nef anti-canonical bundle, 2025.
\newblock Preprint. arXiv: 2506.23218.

\bibitem[Nak04]{Nak04}
Noboru Nakayama.
\newblock {\em Zariski-decomposition and abundance}, volume~14 of {\em MSJ
  Memoirs}.
\newblock Mathematical Society of Japan, Tokyo, 2004.

\bibitem[Nam02]{Nam01}
Yoshinori Namikawa.
\newblock Projectivity criterion of {Moishezon} spaces and density of
  projective symplectic varieties.
\newblock {\em Int. J. Math.}, 13(2):125--135, 2002.

\bibitem[Ou23]{Ou17}
Wenhao Ou.
\newblock On generic nefness of tangent sheaves.
\newblock {\em Math. Z.}, 304(4):58, 2023.

\bibitem[Ou24]{Ou24}
Wenhao Ou.
\newblock Orbifold modifications of complex analytic varieties, 2024.
\newblock Preprint. arXiv:2401.07273.

\bibitem[Ou25a]{Ou25}
Wenhao Ou.
\newblock A characterization of uniruled compact {K}\"ahler manifolds, 2025.
\newblock Preprint. arXiv:2501.18088.

\bibitem[Ou25b]{Ou25b}
Wenhao Ou.
\newblock Orbifold chern classes and bogomolov-gieseker inequalities, 2025.
\newblock Preprint. arXiv:2512.22273.

\bibitem[RT23]{RT22}
Erwan Rousseau and Behrouz Taji.
\newblock Chern class inequalities for nonuniruled projective varieties.
\newblock {\em Bull. Lond. Math. Soc.}, 55(6):2856--2875, 2023.

\bibitem[Sat56]{Sat56}
Ichir{\^o} Satake.
\newblock On a generalization of the notion of manifold.
\newblock {\em Proc. Natl. Acad. Sci. USA}, 42:359--363, 1956.

\bibitem[Tom21]{Toma19}
Matei Toma.
\newblock Bounded sets of sheaves on relative analytic spaces.
\newblock {\em Ann. Henri Lebesgue}, 4:1531--1563, 2021.

\bibitem[Wu21]{Wu21}
Xiaojun Wu.
\newblock The {B}ogomolov’s inequality on a singular complex space, 2021.
\newblock Preprint. arXiv:2106.14650.

\bibitem[Wu22]{Wu22}
Xiaojun Wu.
\newblock Strongly pseudo-effective and numerically flat reflexive sheaves.
\newblock {\em J. Geom. Anal.}, 32(4):Paper No. 124, 61, 2022.

\bibitem[Zha24]{Zh24}
Yashan Zhang.
\newblock A note on {T}eissier problem for nef classes.
\newblock {\em Proc. Amer. Math. Soc.}, 152(7):2831--2843, 2024.

\bibitem[ZZZ25]{ZZZ25}
Chuangjing Zhang, Shiyu Zhang, and Xi~Zhang.
\newblock The {M}iyaoka-{Y}au inequality for minimal {K}\"ahler klt spaces,
  2025.
\newblock Preprint. arXiv:2503.13365.

\end{thebibliography}
\end{document}